\newtheorem{theorem}{Theorem}[section]
\newtheorem{corollary}[theorem]{Corollary}
\newtheorem{lemma}[theorem]{Lemma}
\newtheorem{definition}[theorem]{Definition}
\newtheorem{proposition}[theorem]{Proposition}
\newtheorem{remark}[theorem]{Remark}
\newcommand\reallywidehat[1]{%
\savestack{\tmpbox}{\stretchto{%
  \scaleto{%
    \scalerel*[\widthof{\ensuremath{#1}}]{\kern-.6pt\bigwedge\kern-.6pt}%
    {\rule[-\textheight/2]{1ex}{\textheight}}
  }{\textheight}%
}{0.5ex}}%
\stackon[1pt]{#1}{\tmpbox}%
}
\newcommand{\nm}[1]{ || #1 || }
\newcommand{\C}{\mathbb{C}}
\newcommand{\R}{\mathbb{R}}
\newcommand{\Hc}{\mathcal{H}}
\newcommand{\li}[1]{\overline{ #1}}
\newcommand{\Tr}{\mathrm{Tr}}
\newcommand{\p}{\mathfrak{p}}
\newcommand{\g}{\mathfrak{g}}
\newcommand{\kf}{\mathfrak{k}}
\newcommand{\h}{\mathfrak{h}}
\newcommand{\D}{\mathbb{D}}
\newcommand{\Ad}{\mathrm{Ad}}
\newcommand{\ad}{\mathrm{ad}}
\begin{document}

\title{Wehrl inequalities for matrix coefficients of
holomorphic discrete series}
\author{Robin van Haastrecht and Genkai Zhang}
\begin{abstract}
    We prove Wehrl-type $L^2(G)-L^{p}(G)$
    inequalities 
    for matrix
    coefficients
    of vector-valued
    holomorphic discrete series of $G$,  
    for even integers $p=2n$.
        The optimal constant is expressed
    in terms of Harish-Chandra
    formal degrees for the discrete series.
    We  prove the maximizers
    are precisely the reproducing kernels.    
\end{abstract}

\keywords{
Lie groups, 
unitary representations, Hermitian symmetric
spaces, holomorphic discrete 
series, Harish-Chandra formal degree,
reproducing kernels, matrix coefficients, $L^p$-spaces, Wehrl inequality
Toeplitz operators.}

\subjclass[2020]{22E30, 22E45, 32A36, 43A15}


\thanks{Research
by Genkai Zhang partially supported
by the Swedish Research Council (Vetenskapsr\aa{}det). }

\maketitle
\baselineskip 1.15pc

\section{Introduction}
In the present paper we shall
study the $L^2-L^p$ optimal
inequalities for matrix
coefficients for holomorphic discrete series representations of
Hermitian Lie groups.
We start with a brief
introduction 
on the main problem.
\subsection{Background
and Main Problem}

Let $
(G, \pi, \Hc)$
be a unitary irreducible
representation
of a Lie group $G$
and assume that $\pi$ is
a discrete series
relative to a homogeneous
space $G/H$ for a closed
subgroup $H\subset G$,
namely the square
norms of the matrix coefficients
$\langle \pi(g)u, v\rangle, g\in G, u, v\in \Hc$ are well-defined as elements
in $L^2(G/H)$ for a certain
$G$-invariant measure on $G/H$. The matrix coefficients are in $L^\infty$ by the unitarity. It is a natural and
important question to find
the optimal estimates
for the $L^p$-norm
for $p\ge 2$ as it is related to  other questions and concepts.

The most studied
case is when $G$
is the  Heisenberg group 
$G=\mathbb R\rtimes\mathbb C^n$,
and the unitary representation 
$
(G, \pi, \Hc)$
is on 
the Fock space $\Hc =\mathcal F(\mathbb C^n)$,
or on $\Hc=L^2(\mathbb R^n)$
in the Schr\"o{}dinger model.
The relevant optimal estimates
are sometimes called
Wehrl inequalities \cite{weh}.
The matrix coefficients $\langle \pi(g) f, f_0\rangle$,
when restricted to 
$\mathbb C^n
=G/\mathbb R$,
are in the space 
$L^2(\mathbb C^n)$.
The Fock space $\Hc =\mathcal F(\mathbb C^n)$
has a reproducing kernel
$e^{\langle z, w\rangle}$, which
maximize the $L^\infty$-norm
among elements of fixed $L^2$-norm.
Fix $f_0=1$ as
the reproducing kernel
$e^{\langle z, w\rangle}$
at $w=0$ (or the Gaussian function in
the Schr\"o{}dinger model).
For each 
positive  operators $T\ge 0$ of unit trace,  $\Tr \,T=1$, 
the  matrix coefficients
$F(g)  
=\langle T\pi(g) f_0,  \pi(g) f_0 \rangle
=\Tr (T\pi(g)f_0\otimes 
(\pi(g)f_0)^\ast)
$ defines
a probability measure on $\mathbb C^n=G/\mathbb R$,
$\int_{\mathbb C^n}
F(g) dg=1$
by Weyl's Plancherel formula 
(up to a normalization).
 Wehrl \cite{weh} proposed the quantity $-\int F(g) \ln F(g) dg$
as a classical entropy 
corresponding to the quantum entropy
$-\Tr\,  T\ln T $ defined by $T$. 
Wehrl investigated the question when the entropy is minimal. It is easy to see this must happen for some $T = f \otimes  f^*$ a pure tensor, by concavity of the function $-x \ln x$, so it is enough to consider these pure tensors. Wehrl conjectured
the classical entropy is minimal 
for $f =\pi(g)f_0$, a translation of the
function $f_0=1$ (or the Gaussian function in the Schrödinger model)
by an element $g\in G$.
Lieb \cite{Lieb-cmp}
studied a more general
question on  the optimal $L^2(\mathbb C^n)-L^{p}(\mathbb C^n)$ boundedness, $p \ge 2$
for the matrix coefficients $\langle \pi(g) f, f_0 \rangle, 
g\in \mathbb C^n$, and proved
that the maximizers
are precisely achieved by  $f =\pi(g_0)f_0$
for some $g_0\in G$;
  the Wehrl conjecture
becomes an immediate
consequence
by taking the derivative at $p=2$
of the inequality for $p=2$.

When $G$ is a compact semisimple Lie
group
any irreducible representation $(\pi, \Hc)$
is  finite-dimensional,
and there
is also a preferred choice
of the vector $v_0$, 
namely the highest weight
vector or its translates under the action of $G$, similar to the function $f_0=1$ above for the Heisenberg group. The
Schur orthogonality computes
 $L^2$-norms of 
 the matrix coefficients
$\langle \pi(g)f, f_0\rangle,
g\in G$ using the dimension of $\mathcal H$
and it is natural problem to find  $L^2-L^p$ optimal estimates.
In  \cite{DF}
a statement on the $L^2-L^{4}$
optimal estimate was given
with  a sketch of the proof.
For $G=SU(2)$
the Wehrl $L^2-L^p$
inequality \cite{weh}
was proved by 
Lieb and Solovej \cite{LS-acta} 
more than 30 years later.
They also proved
the inequality \cite{LS-cmp}
for
$G=SU(N)$
and
for the symmetric
tensor power $S^m(\mathbb C^N)$
representations of $G$.
They used methods quite
different from the classical
analytic method
\cite{Lieb-cmp}
by introducing
quantum channel operators
and proving more general
results about the
eigenvalue distribution
of these operators.

The next interesting
and challenging case is for real
simple non-compact Lie groups
$G$ and their
discrete series
representations $(\pi, \Hc)$.
Harish-Chandra has
generalized the Schur
orthogonality relations for compact
groups using the formal degree. It
suggests that there
should
also be optimal
$L^2-L^p$ estimates
for the matrix coefficients, $p\ge 2$.
When $G=SU(1, 1)$ Lieb and Solovej \cite{LS-aff}
proved optimal 
$L^2-L^p$ estimates
for the
Bergman space as holomorphic discrete
series representations
of $G=SU(1, 1)$
for even integers $p=2n$
by using direct computations.
This was generalized to all $p\ge 2$
by Kulikov \cite{ku}
using the isoperimetric inequality for
the hyperbolic area
of sublevel sets of
the holomorphic
functions (as sections
of the cotangent bundle with the dual hyperbolic metric).
In all these cases,
$G=\mathbb R\rtimes \mathbb C^n$,
$SU(2)$ 
and $SU(1, 1)$, 
the inequalities
are proved for any general positive
convex function
instead of the $L^{p}$-norm.
A general systematic
treatment is given by
Frank \cite{Fr}.

\subsection{Our Main Results and Methods}

We consider
now 
a Hermitian Lie group $G$
and its 
holomorphic
discrete series
$(\Hc_{\Lambda}, \pi_\Lambda)$
with highest
weight $\Lambda$.
The discrete series
will be realized
as the Bergman space of
$V_\Lambda$-valued 
holomorphic
functions 
on the bounded symmetric domain $D=G/K$
of $G$
with $(V_\Lambda, \tau_\Lambda, K)$
the unitary representation
of $K$ with $K$-highest
weight $\Lambda$. We will write $\tau = \tau_{\Lambda}$ 
in the rest of the text if no confusion would arise. The holomorphic
functions
can be realized
as sections of the 
holomorphic
vector bundle
over $D$
with the
Harish-Chandra realization
of $D$, and
the metric 
on the bundle
can be expressed
as $\langle \tau_\Lambda (B(z, z)^{-1})v, v\rangle$
using the Bergman operator $B(z,z)$;
see Definition
\ref{def-bergm-op}
below.
The tensor product
$V_\Lambda^{\otimes n}$
has an irreducible
component $V_{n\Lambda}$
of multiplicity
one, now let $P=P_{n\Lambda}:
V_\Lambda^{\otimes n}\to V_{n\Lambda}$
be the orthogonal projection.
Write $P(f^{\otimes n})(z)
=
P(f^{\otimes n}(z))
$, the point-wise orthogonal projection.
Our main result
is the following.

\begin{theorem} 
(Theorem \ref{wherlineq} and Corollary 
\ref{wehrlineqcor})
Let $n\ge 2$ be an integer, 
$(V_\Lambda, 
\tau, K)$ 
be  an irreducible representation 
of $K$ with a unit highest
weight vector $v_\Lambda$
and  $\Hc_{\Lambda}$
 the 
holomorphic discrete
series realized as the Bergman space
of $V_\Lambda$-valued
holomorphic functions.
Then 
\begin{equation}
\label{main-1}
\Vert P(f^{\otimes n}) \Vert_{\Hc_{\Lambda}}^2
 \leq c_G^{n-1} \frac{
(
d_{\Lambda}^{\mathrm
H}
)^n}
{d_{n \Lambda}^{\mathrm{H}
}
} 
\Vert f \Vert_{\Hc_{\Lambda}}
^{2n},
\end{equation}
and
\begin{equation}
\label{main-2}
\Vert 
F_f\Vert_{L^{2n}}^{2n}
\leq c_G^{n-1} \frac{(d_{\Lambda}^{\mathrm{H}} )^{n}}{d_{n \Lambda}^{\mathrm{H}}} 
\Vert 
F_f\Vert_{L^{2}}^{2n}
\end{equation}
for $f\in \Hc_\Lambda$
and 
$F_f(g)
:=\langle\pi(g)f, v_\Lambda\rangle$, $g\in G$.
The equality holds if and only if $f(z) =c K(z,w) \tau(k) v_{\Lambda}$ for some 
$w \in D$, $k \in K$,
$c\in \mathbb C$.
\end{theorem}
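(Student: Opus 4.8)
The plan is to transport the computation of $\Vert F_f\Vert_{L^{2n}}^{2n}$ into the $n$-fold tensor power $\Hc_\Lambda^{\otimes n}$ and then to read off the inequality from the elementary fact that an orthogonal projection does not increase norms. First I would use the scalar identity $\langle \pi(g)f,v_\Lambda\rangle^n = \langle \pi(g)^{\otimes n} f^{\otimes n}, v_\Lambda^{\otimes n}\rangle$ to write
\[
\Vert F_f\Vert_{L^{2n}}^{2n} = \int_G \bigl|\langle \pi(g)^{\otimes n} f^{\otimes n}, v_\Lambda^{\otimes n}\rangle\bigr|^2\,dg ,
\]
recognizing the right-hand side as the squared $L^2(G)$-norm of a matrix coefficient of the diagonal representation $\pi^{\otimes n}$ on $\Hc_\Lambda^{\otimes n}$. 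In parallel, the Harish-Chandra orthogonality relation applied to the irreducible $\Hc_\Lambda$ gives $\Vert F_f\Vert_{L^2}^2 = (d_\Lambda^{\mathrm H})^{-1}\Vert f\Vert_{\Hc_\Lambda}^2$, hence $\Vert F_f\Vert_{L^2}^{2n} = (d_\Lambda^{\mathrm H})^{-n}\Vert f\Vert_{\Hc_\Lambda}^{2n}$; this is what will make \eqref{main-1} and \eqref{main-2} equivalent.

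The decisive observation is that $v_\Lambda^{\otimes n}$, being the weight vector of weight $n\Lambda$, lies entirely in the multiplicity-one $\Hc_{n\Lambda}$-isotypic component of $\Hc_\Lambda^{\otimes n}$: it is annihilated by $\mathfrak{p}^-$ and so generates a lowest weight module isomorphic to $\Hc_{n\Lambda}$, whose lowest $K$-type is the Cartan component $V_{n\Lambda}\subset V_\Lambda^{\otimes n}$. Writing $Q$ for the orthogonal projection of $\Hc_\Lambda^{\otimes n}$ onto this component, and using that $\pi^{\otimes n}(g)$ preserves it, the matrix coefficient above only sees $Q f^{\otimes n}$. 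Applying the Harish-Chandra orthogonality relation for the irreducible $\Hc_{n\Lambda}$ then yields
\[
\Vert F_f\Vert_{L^{2n}}^{2n} = \frac{1}{\widetilde d_{n\Lambda}}\,\Vert Q f^{\otimes n}\Vert_{\Hc_\Lambda^{\otimes n}}^2 ,
\]
where $\widetilde d_{n\Lambda}$ is the formal degree of the embedded copy of $\Hc_{n\Lambda}$ computed with the inner product inherited from $\Hc_\Lambda^{\otimes n}$. Since the embedding $\Hc_{n\Lambda}\hookrightarrow \Hc_\Lambda^{\otimes n}$ is $G$-equivariant with irreducible source it is a scalar multiple of an isometry; identifying $Q f^{\otimes n}$ with the holomorphic function $z\mapsto P(f(z)^{\otimes n})$ then shows that the quantity in \eqref{main-1} is proportional to $\Vert Q f^{\otimes n}\Vert^2$, with $\widetilde d_{n\Lambda} = d_{n\Lambda}^{\mathrm H}/c_G^{\,n-1}$. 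The inequality itself is now immediate, since $Q$ is an orthogonal projection:
\[
\Vert Q f^{\otimes n}\Vert^2 \le \Vert f^{\otimes n}\Vert_{\Hc_\Lambda^{\otimes n}}^2 = \Vert f\Vert_{\Hc_\Lambda}^{2n}.
\]
Combining the three displays with $\Vert f\Vert_{\Hc_\Lambda}^{2n} = (d_\Lambda^{\mathrm H})^n \Vert F_f\Vert_{L^2}^{2n}$ produces both \eqref{main-1} and \eqref{main-2} with the stated constant $c_G^{\,n-1}(d_\Lambda^{\mathrm H})^n/d_{n\Lambda}^{\mathrm H}$.

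For the equality statement, equality forces $Q f^{\otimes n} = f^{\otimes n}$, i.e. $f^{\otimes n}$ lies entirely in the $\Hc_{n\Lambda}$-component, which pointwise means $f(z)^{\otimes n}\in V_{n\Lambda}$ for a.e.\ $z$. I would then invoke the characterization that a pure tensor power $v^{\otimes n}$ lies in the Cartan component $V_{n\Lambda}$ exactly when $v$ is a highest weight vector of $V_\Lambda$ up to a scalar, that is $v = c\,\tau(k)v_\Lambda$ for some $c\in\C$, $k\in K$ (or $v=0$). Thus $f(z) = c(z)\tau(k(z))v_\Lambda$, and holomorphicity rigidifies this field into the single global form $f(z) = c\,K(z,w)\tau(k)v_\Lambda$. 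The converse is transparent: for a coherent state $f = \pi(g_0)v_\Lambda$ one has $f^{\otimes n} = \pi(g_0)^{\otimes n} v_\Lambda^{\otimes n}\in \Hc_{n\Lambda}$, so $Q$ acts as the identity and equality holds.

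The hard part will be the bookkeeping concealed in the two normalization claims: identifying the abstract projection $Q$ on $\Hc_\Lambda^{\otimes n}$ with the pointwise projection $P$ in the Bergman realization, and computing the resulting constant $c_G^{\,n-1}$ relating $\widetilde d_{n\Lambda}$ to the intrinsic formal degree $d_{n\Lambda}^{\mathrm H}$; this is where the reproducing kernel of $\Hc_\Lambda$ and the Bergman operator $B(z,z)$ must be used to pin down the explicit value of $c_G$. The second delicate point is the converse direction of the equality analysis, namely the rigidity step showing that a holomorphic field of highest weight vectors must be a single reproducing kernel, which requires combining the pointwise Cartan-component characterization with the holomorphic structure rather than representation theory alone.
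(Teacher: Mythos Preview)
Your approach to the inequality itself is essentially the paper's: push the matrix coefficient into $\Hc_\Lambda^{\otimes n}$, note that $v_\Lambda^{\otimes n}$ lies in the multiplicity-one component $\Hc_{n\Lambda}$, apply Harish-Chandra orthogonality there, and use that an orthogonal projection contracts norms. The paper packages this via the partial isometry $Q_0 = C_{\Lambda,n}J_0$ with $J_0(F)(z)=P_{n\Lambda}F(z,\ldots,z)$, and computes the constant by evaluating both sides on the constant function $v_\Lambda$, using $\|v_\Lambda\|_{\Hc_\Lambda}^2 = d_\Lambda^{-1}$ (this is where the Selberg integral and $c_G$ enter). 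Your bookkeeping is slightly off --- for instance $\|F_f\|_{L^2}^2 = d_\Lambda^{-1}\|f\|^2\|v_\Lambda\|_{\Hc_\Lambda}^2 = d_\Lambda^{-2}\|f\|^2$, not $(d_\Lambda^{\mathrm H})^{-1}\|f\|^2$, and the formal degree does not depend on which isometric copy of $\Hc_{n\Lambda}$ you use --- but the structure is right and you flag the normalization issues yourself.

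The genuine gap is in the ``only if'' direction of the equality case. You pass from the global condition $f^{\otimes n}\in\Hc_{n\Lambda}\subset\Hc_\Lambda^{\otimes n}$ to the pointwise condition $f(z)^{\otimes n}\in V_{n\Lambda}$ for all $z$, and then try to recover $f$ from the resulting field of highest-weight vectors $f(z)=c(z)\tau(k(z))v_\Lambda$ by ``holomorphic rigidity''. This cannot work: the pointwise condition is strictly weaker than the global one, and in the scalar case ($\dim V_\Lambda=1$) it is vacuous --- every $f\in\Hc_\Lambda$ satisfies it, yet most are not reproducing kernels. So no argument based only on the pointwise Cartan-component characterization plus holomorphicity can force $f$ to be a coherent state.

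The paper extracts more from the global condition before passing to points. Since the diagonal-restriction intertwiner $J_0$ annihilates $(z_i-w_i)F$ for every $F\in\Hc_\Lambda\otimes\Hc_\Lambda$, the subspace $\Hc_{2\Lambda}$ is orthogonal to the range of each $T_{i,1}-T_{i,2}$; hence $f\otimes f\in\Hc_{2\Lambda}$ forces $(T_i^*\otimes I - I\otimes T_i^*)(f\otimes f)=0$, i.e.\ $T_i^*f=u_i f$ for some scalars $u_i$. This Toeplitz eigenvector property says that $p\mapsto\langle p(\bar u),f(0)\rangle_\tau$ is a bounded point evaluation, which (via an Oka--Weil argument) forces $\bar u\in D$ and $f=K(\,\cdot\,,\bar u)v$. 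Only then does the pointwise condition at $z=0$, combined with the compact-group Cartan-component lemma, pin down $v=\tau(k)v_\Lambda$. The Toeplitz step is the missing idea in your plan.
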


The precise notation
is found below.
When $\Hc_\Lambda$
is a scalar holomorphic
discrete series
this result is proved in 
\cite{Z-24}.

We 
explain 
briefly
our methods
and some auxiliary results.
First we consider
the $n$-fold tensor power $\Hc_\Lambda^{\otimes n}$
of the discrete
series.
The orthogonal projection $Pf(z)^{\otimes n}$
of $f(z)^{\otimes n}
\in 
V_\Lambda^{\otimes n}
$
onto the highest component 
(also called the Cartan component)
$V_{n\Lambda}
\subset 
\otimes^n V_\Lambda
$
defines a $G$-intertwining
operator onto
the discrete series
$H_{n\Lambda}$.
This follows
from some
general facts
for holomorphic discrete
series \cite{repka}. Thus
there should be an inequality.
The constant in the inequality
is abstractly obtained by the
Harish-Chandra formal degree.
However the constant
is only determined up to normalization, whereas
our Bergman space
is defined by the usual
normalization. 
We then find the exact
formula for the
Harish-Chandra
formal degree by using
the evaluation
of the Selberg Beta integral 
\cite{fako, AAR};
see  \Cref{constanthcvsour} below. As a 
consequence we find also in Theorem
\ref{constantthm}
the formula
for the reproducing kernel under our normalization.
To prove that the maximizers
are achieved by the reproducing
kernel we prove that
they are eigenvectors
of  Toeplitz operators
  \cite{Z-24}
  and that they define
  the bounded point 
  evaluations.
  We finally use 
the earlier results
in  \cite{DF}
about Wehrl inequalities
for compact groups.
However, we realized the proof in \cite{DF} is incomplete
and we provide a full proof in Appendix \ref{cptwehrl}.

For the unit disk $D=SU(1, 1)/U(1)$
we find 
in Theorem 
\ref{improved-ineq}
an improved Wehrl
inequality
with
a precise
extra term added
on the left hand side of
the Wehrl inequality (\ref{main-1});
the extra term involves 
first and second
derivatives of $f$.
Our result might lead to
finding an improved Wehrl
$L^2-L^p$-inequality for the
Bergman space on 
the unit disc \cite{Fr, ku}
and
for the  Fock space 
\cite{FNT}.

\subsection{Further Questions}
There are quite a few open
questions related
to the Wehrl inequality. 
The Wehrl $L^2-L^p$-inequality
for the Bergman
space on
the unit ball
in $\mathbb C^n$, $n\ge 2$
is still open. 
In \cite{LS-cmp} the 
equality is proved for Bergman spaces
of holomorphic sections
of symmetric tangent
bundles on the projective space
$\mathbb P^n=SU(n+1)/U(n)$
using quantum
channels \cite{LS-acta}. 
These channels
can be defined \cite{Z-24} 
for the general
 holomorphic
discrete series
for $SU(n, 1)$.
In \cite{vH-1, vh-2}
the limit formulae
for the functional
calculus of the
channels are found
generalizing
earlier results
of
\cite{LS-cmp}. 
It would be interesting
to study the eigenvalue
distributions
of the channel operators
for other representations
of $SU(n+1)$
and for the non-compact 
group $SU(1, 1)$.
Kulikov \cite{ku}
proved some subtle
properties
about the hyperbolic
area of holomorphic
functions in the Bergman space
using isoperimetric inequalities.
It might
be important 
to study the volumes
of sublevel
sets for holomophic
functions in Bergman space
in higher dimensions
rather than 
isoperimetric 
problems for general
sets.
For a
discrete series $(\Hc, \pi, G)$
of a semisimple Lie
group it seems a rather
challenging problem to 
find the optimal $L^2-L^{2n}$ estimates.

\subsection{Organization
of the paper}
In Section
\ref{sect-herm}
we recall
some necessary known results on Hermitian symmetric spaces $G/K$,
and in Section 
\ref{sect-HDS}
we introduce
holomorphic discrete series representations of $G$ and their realizations as
Bergman spaces of vector-valued holomorphic functions
on $D$. We
find in Section 
\ref{for-deg} the exact formula
for the Harish-Chandra
formal degrees
under our 
(somewhat  standard) normlization of the metric on $G/K$.
The Wehrl equalities
are proved in Section
\ref{Wehrl-ineq}.
An improved Wehrl inequality
for the unit disc is proved
in  \Cref{improved-W}.
In  \Cref{cptwehrl}
we give a complete
 proof for Wehrl inequalities
for compact semisimple Lie groups and 
in Appendix \ref{BPV} we prove
that the bounded point evaluations
for our Bergman space of vector-valued holomorphic functions are
given by the point in $D=G/K$, they
are all needed to prove the Wehrl inequalities
in  \Cref{Wehrl-ineq}.

\subsection{Notation} For the convenience of the reader we add a list of the most common notation in the paper.

\begin{enumerate}
    \item $G$ is a simple Hermitian Lie group and $G/K$ is a Hermitian symmetric space.
    \item $\g$ is the Lie algebra of $G$.
    \item 
    $\g^{\mathbb C} = \p^+  \oplus \kf^{\C} \oplus \p^-$ is the decomposition of the Lie algebra into eigenspaces of a central element of $\kf^{\C}$.
    \item $D = G/K$ is the bounded Hermitian symmetric domain of rank $r$ realized in $\p^+=\mathbb C^N$.
    \item $\Delta$ are the roots of $\g^{\C}$ with respect to the Cartan subalgebra $\h^{\C}$ of $\kf^{\C}$, which is also a Cartan subalgebra of $\g^{\C}$.
    \item $(V_{\Lambda}, \tau_{\Lambda}, K)$ is a representation of $K$ of highest weight $\Lambda$.
    \item $(\Hc_{\Lambda}, \pi_{\Lambda}, G)$ is the holomorphic discrete series of $G$ associated to the representation $(V_{\Lambda}, \tau_{\Lambda}, K)$.
    \item The Haar measure of $G$ is normalized by $\int_G f(g) dg = \int_D \left( \int_K f(x k) d k \right) d \iota(x)$, where $\int_K dk = 1$ and $d \iota$ is defined in (\ref{bergmanmeasure}).
    \item $d_{\Lambda}^{\mathrm{H}}$ and $d_{\Lambda}$ is the formal degree for a holomorphic discrete series $(\Hc_{\Lambda}, \pi_{\Lambda}, G)$, by different normalizations, see (\ref{formaldegreeort}) and(\ref{ourharidegree}).
    \item $P_{\Lambda}$ is the projection onto an irreducible $K$-representation of highest weight $\Lambda$.
    \item $Q_0$ is the projection onto the Cartan component of highest weight $n \Lambda$ $\Hc_{n \Lambda} \subseteq \Hc_{\Lambda}^{\otimes n}$. For $SU(1,1)$, $Q_k$ is the projection onto the irreducible component $\Hc_{\mu + \nu + 2k} \subseteq \Hc_{\mu} \otimes \Hc_{\nu}$.
\end{enumerate}

{\bf Acknowledgements} We would like
to thank Rupert Frank for some stimulating
discussions.

\section{Hermitian symmetric spaces 
realized as bounded domains $D=G/K$}\label{sect-herm}
We recall briefly some known facts
on Hermitian symmetric spaces
and related Lie algebras. 
We shall use the Jordan triple description; see \cite{loos}
and \cite[Chapter 2.5]{sata}. 

\subsection{Hermitian 
Symmetric spaces $G/K$ the  Lie algebras
$\g$ of $G$. }
Let $G$ be a connected 
simple Lie group
of real rank $r$, $K$ its maximal compact subgroup,
and $G/K$ a Hermitian symmetric space of complex dimension $N$.
Let $\g$ be the Lie algebra of $G$ and $\g = \kf + \p$ the Cartan decomposition with Cartan involution $\theta$. Then $\kf$ has one-dimensional center, so $\kf = [\kf,\kf] \oplus \R Z$, where $Z$ generates the center and is normalized
so that  $J:=\text{ad}(Z)$
defines a complex structure on $\p$. This implies the existence of a Hermitian complex structure on the symmetric space $D = G/K$. Let $\h \subseteq \kf$
be a maximal Cartan subalgebra for $\kf$,
then its complexification
$\h^{\mathbb C} 
\subseteq \kf^{\mathbb C}
\subset \g^{\mathbb C}
$ is also a Cartan subalgebra
for  $\g^{\mathbb C}$ since 
$\kf^{\mathbb C}$ 
and 
$ \g^{\mathbb C}$ are of the same
rank.
The roots $\Delta$ of 
$\h^{\mathbb C} $ in 
$\g^{\C}$ are
$\Delta=\Delta_c
\cup \Delta_n$,  where $\Delta_c$
are the compact roots  $\alpha$
with   $\g_{\alpha} \subseteq \kf^{\C}$,
and $\Delta_n$ the non-compact
roots $\alpha$ with $\g_{\alpha} \subseteq \p^{\C}$. We choose an ordering of roots so that $J=\text{ad}(Z)$ acts on  $\Delta_n^{\pm} = \Delta^{\pm}
\cap \Delta_n$ as $\pm i$. For every $\alpha \in \Delta^+$ we fix an $\mathfrak{sl}_2$-triple such that $h_{\alpha} \in i \h$, $e_{\pm \alpha} \in \mathfrak{g}_{\pm \alpha}$ and
$$[h_{\alpha}, e_{\alpha}] = 2 e_{\alpha}, \ \theta(e_{\alpha}) = -e_{-\alpha}, \ [e_{\alpha}, e_{-\alpha}] = h_{\alpha}.$$
We then have the decomposition 
$\g^{\C}=\kf^{\mathbb C} \oplus \p^+ \oplus \p^-$
with $\p^+$ and $\p^-$
being the sum of the non-compact
positive and negative roots,
respectively and  given
by $$ \p^{\pm} = 
\{ v \mp i J v : v \in \p \}.
$$
Note that $\li{\p^+} = \p^-$, 
$$[\p^+,\p^+] = [\p^-,\p^-] = 0,$$
and
$$[\p^+,\p^-] = \kf^{\C}.$$

Denote
$$
D(u,\li{v}) =[u, \li{v}]\in \mathfrak{k}^{\mathbb C}
$$ identified with its action on $\p^+ = \C^N$,
$$
D(u,\li{v}) 
w 
\coloneqq \ad(D(u, \li{v}))(w) = [D(u,\li{v}), w],
\quad u,v,w \in \p^+ = \C^N.
$$
Then the triple product
$D(u,\li{v}) 
w$ is symmetric in $u$ and $w$. Let $Q(u): \p^{-}=\overline{\mathbb C^N}
\to \p^{+}$ and 
$Q(\li{v}): \p^{+}
\to \p^{-}$ be the  quadratic maps
$$Q(u)\li{v} = \frac{1}{2} D(u, \li{v}) u,
\, Q(\li{v})
u = \frac{1}{2} D(\li{v}, u) \li{v},
\, u\in \p^+, \bar{v}\in \p^-,
$$
See \cite{loos}.

Let $\{ \gamma_i \}_{i=1}^r$
be the strongly orthogonal 
non-compact roots starting with
the highest root $\gamma_1$,
where $r$ is the real rank of $G$.
Dete the corresponding  co-roots
and root vectors  of $\gamma_j$ of by
$$h_j = h_{\gamma_j}, \, e_{\pm j} = e_{\pm \gamma_j}$$
chosen as in \cite{fako}
so that $\overline{e_{\pm j}} = e_{\mp j},$ 
and
$e_i$ is a tripotent \cite{loos}, 
$Q(e_i) \bar{e_i}= e_i$.
The root vectors $\{e_i\}_{i=1}^r$ form a frame, i.e. a maximal orthogonal system of primitive tripotents of unit norm, in the sense of \cite[Section 5.1]{loos}.

Let
$$ p \coloneqq (r-1)a + b + 2,\, n_1 = r + a \frac{r(r-1)}{2}.$$
The dimension $N$ is then
$$N = n_1 + rb.$$
Note the integer $p$ can be computed as
$p = \Tr(D(e_1^+, e_1^-)|_{\p^+}) = \Tr(D(e_j^+,e_j^-)|_{\p^+})$ for any $j$. 
Now we normalize the  $K$-invariant Euclidean
inner product on $\mathbb C^N$ by
\begin{equation}\label{Eucl}
\langle v, w \rangle=\langle v, w \rangle_{\p^+} 
:= \frac{1}{p} \Tr(D(v, \li{w}) |_{\p^+}),
\end{equation}
so that $\nm{e_j} = 1$ 
for any $j$ and the $\{e_j\}_{j=1}^r$ are orthogonal.

\subsection{The Harish-Chandra
factorization of $G$
in $G^{\mathbb C}=P^+K^{\mathbb C} P^-
$
and the Bergman operator}

The symmetric space $D = G/K$ can
be realized as a
circular convex bounded
domain in $\C^N = \mathfrak{p}^+$
as follows, also called the Harish-Chandra
realization. 
Consider the natural
inclusion map followed
by the quotient
map
$$
G \hookrightarrow G^{\C} = P^+ K^{\C} P^- \rightarrow G^{\C} / K^{\C} P^- \cong P^+ \cong \mathfrak{p}^+.$$
Then $K$ is mapped into the reference point
$0 \in \mathfrak{p}^+$ and
it induces an injective holomorphic map
and the
Harish-Chandra realization of 
$$ D:=G / K =G\cdot 0 \subseteq \mathfrak{p}^+.$$
To describe the action of $G$ on $D$ we
need some quantities.

\begin{definition}
\label{def-bergm-op}
The Bergman operator is defined as
$$B(x,\li{y})
= I - D(x, \li{y}) + Q(x) Q(\li{y}): \mathbb C^N
\to \mathbb C^N.
$$
\end{definition}

It follows from \cite[Theorem 8.11]{loos}
that the element $B(z, z)^{-1} \in 
K^{\C}$ for $z\in D=G/K\subset \mathbb C^N$
and
$$B(z,\li{z})^{-1}
\coloneqq K^{\C}
\mathrm{- part  \ of \ } \exp(\li{z})\exp(z)$$
under the decomposition $G^{\C} = P^+ K^{\C} P^-$.

We also have another norm on $\mathbb C^N$, the spectral norm $| - |$, such that
$D$ is a unit ball with the norm,
$$D = \{ z \in \mathbb C^N \ | \ |z| < 1 \},$$ see \cite[Theorem 4.1]{loos}. 
Furthermore, we have the following polar
decomposition
$$ D = \{ \Ad(k)(t_1 e_1 + \dots + t_r e_r) \ | \ k \in K, t_i \in [0,1) \};$$
see \cite[Theorem 3.17]{loos}.

We identify the holomorphic
tangent space
$T_z^{(1,0)}(D)
$ of $D\subset \mathbb C^N$
at $z\in D$ 
with $\mathbb C^N$, 
$T_z^{(1,0)}(D) =\mathfrak p^+$.
Denote  $J_g(z) 
= dg(z)$, 
the Jacobian of the holomorphic map $g: D\to D$ in local coordinates,
$$J_g(z): 
\mathbb C^N = T_z^{(1,0)}(D)
\rightarrow T_{g z}^{(1,0)}(D) = \mathbb C^N.$$
The identification of $\mathbb C^N$ with $T_z^{(1,0)}(D)$ is done by realizing $D \subseteq \mathbb C^N$. Now $B(z,\li{z})$ acts on $\mathbb C^N$ by the adjoint action,
and we have the following important
transformation rule \cite[Lemma 2.11]{loos}
\begin{equation}
\label{transformrule}
J_g(z)^* B(g \cdot z, \li{g \cdot z})^{-1} J_g(z) = B(z,\li{z})^{-1}.
\end{equation}
 As $B(0,0) = I$ it then follows directly that
\begin{equation}
\label{Bzzeq}
 B(g \cdot 0, \li{g \cdot 0}) = J_g(0) J_g(0)^*.
\end{equation}
The Jacobian
$J_g(z)$ 
can be obtained from the more
general  canonical automorphy factor $J(g,z)$ 
 \cite[Lemma 5.3]{sata} 
defined by
$$J(g,z) = K^{\C}-\mathrm{part \ of \ } g \cdot \exp(z);$$
we have $J_g(z) = \Ad(J(g,z))$. 
Since elements in $K^{\mathbb C}$
are realized as linear maps on $\mathfrak p^+$
via the adjoint action we can 
identify $J_g(z)$ with $J(g,z)$, 
but it will be clear from context which one is meant.
In particular we have $J_k(z) =J(k, z)= k$.

\section{Holomorphic discrete
series of $G$ realized as Bergman spaces
of vector-valued holomorphic functions on $D$}
\label{sect-HDS}
\subsection{Bergman space
of holomorphic functions on $D$.
Invariant measure}
Let  $dm(z)$
be the Lebesgue measure
defined by the inner product (\ref{Eucl}).
The Bergman space of
holomorphic functions $f(z)$ on $D$
such that
$$
\int_D |f(z)|^2 dm(z) <\infty
$$
has the reproducing kernel, up to a normalization
constant (which will be determined below
for general Bergman spaces), 
\begin{equation}
\label{berg-kernel-h-p}
\det B(z, w)^{-1}
=
h(z, w)^{-p} 
\end{equation}
where $h(z, w)$ is an irreducible
polynomial holomorphic in $z$
and anti-holomorphic in $w$
and of maximal bi-degree $(r, r)$;
see e.g. \cite{fako, kora}.
Now by \cite[Corollary 3.15]{loos} for $z = \sum_{j=1}^r \lambda_j e_j$
$$ h(z,z) = \prod_{j=1}^r (1 - |\lambda_j|^2).$$
Note that this actually describes $h(z,z)$ for any $z \in \mathbb C^N$ as 
$$\mathbb C^N = \Ad(K)(\sum_{i=1}^r \R_{\geq 0} e_i).$$

The Bergman metric on $D$ 
at $z\in D$ is given by
$$ \langle v, w \rangle_z = \langle B(z, \li{z})^{-1} v, w \rangle_{\mathbb C^N} $$
for $v,w \in T_z D = \mathbb C^N$. 
By the transformation
property
(\ref{transformrule}) the Bergman metric is invariant under $G$. 
 We note that for $k \in K$ we have $B(k \cdot z, \li{k \cdot z}) = k B(z,\li{z}) k^{-1}$, and thus for
$$z = \lambda_1 e_1 + \dots \lambda_r e_r$$
we get that
$$\det(B(k \cdot z,\li{k \cdot z})^{-1}) = \det(B(z,\li{z})^{-1})=
 \prod_{j=1}^r (1 - |\lambda_j|^2)^{-p}.$$

The $G$-invariant Riemannian measure on $D=G/K$ is obtained  from the Bergman metric by
\begin{equation}
\label{bergmanmeasure}
   d \iota(z) =\det B(z, z)^{-1}dm(z)= h(z,z)^{-p} dm(z).
\end{equation}

\subsection{Bergman space
of vector-valued holomorphic functions}

Let $(V_{\Lambda}, \tau_{\Lambda}, K)$ 
be an irreducible unitary representation of $K$ of 
highest weight $\Lambda$. It
 can be extended to a rational representation of 
 $K^{\C}$
 on the space $V_{\Lambda}$.
 The $K$-unitary inner product on $V_{\Lambda}$ will be
 denoted by $\langle -,- \rangle_{\tau}$.

We now introduce the holomorphic discrete series.

\begin{definition}
\label{disseries}
Let $(V_{\Lambda},  \tau_\Lambda, 
K)$ be an irreducible representation of $K$ with highest weight $\Lambda$. 
Let $\Hc_{\Lambda}$ 
be  the Hilbert space of holomorphic
functions $f: D \rightarrow V_{\Lambda} $
with the norm square
\begin{equation}
\label{Berg-norm}
\Vert f\Vert_{\Hc_{\Lambda}}^2
:=\int_{D} \langle \tau(B(z,\li{z})^{-1})f(z), f(z) \rangle_{\tau} d \iota(z) < \infty.
\end{equation}
The holomorphic discrete series is
$(\Hc_{\Lambda}, 
\pi_{\Lambda}, G),$
with the  unitary representation \begin{equation}
(\pi_{\Lambda}(g) f ) (z) = \tau(J_{g^{-1}}(z)^{-1}) f(g^{-1} \cdot z),\end{equation}
provided $\Hc_{\Lambda}$ is non-trivial. 
\end{definition}

Indeed, the space $\Hc_{\Lambda} $
in Definition \ref{disseries} could be trivial. 
The Harish-Chandra condition give 
a characterization 
for $\Hc_{\Lambda} $; see e.g. \cite[Lemma 27, Paragraph 9]{hari}, \cite[equality (6)]{kora},  \cite[II, Theorem 6.5]{Wa}.

\begin{theorem}
\label{hccondition}
Let  $\Lambda$
be the highest weight of $(V_{\Lambda}, \tau, K)$
and let $\rho
=\frac 12\sum_{
\alpha \in \Delta^+}\alpha
$
be the half sum
of positive roots $\Delta^+$.
If
$$ (\Lambda + \rho)(h_1) < 0$$
then the Hilbert 
space $\Hc_{\Lambda} \neq \{0\}$
and defines a discrete series
of $G$.
\end{theorem}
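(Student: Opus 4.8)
The plan is to prove non-triviality by producing a single nonzero holomorphic function of finite $\Hc_\Lambda$-norm, and then to recognize the resulting unitary $G$-module as a genuine discrete series. The natural test function is the constant function $f\equiv v_\Lambda$ valued in the highest weight vector, which generates the bottom $K$-type. By (\ref{Berg-norm}) and (\ref{bergmanmeasure}) its norm is
$$\Vert v_\Lambda\Vert_{\Hc_\Lambda}^2=\int_D \langle \tau(B(z,\li z)^{-1}) v_\Lambda, v_\Lambda\rangle_\tau\, h(z,z)^{-p}\, dm(z),$$
so the entire question reduces to deciding for which $\Lambda$ this integral converges.

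First I would exploit the $K$-invariance built into the integrand. Since $B(k\cdot z,\li{k\cdot z})^{-1}=kB(z,\li z)^{-1}k^{-1}$ and both $dm$ and $h(z,z)$ are $K$-invariant, integrating out $K$ via the polar decomposition $D=\{\Ad(k)(t_1e_1+\dots+t_re_r):k\in K,\ t_j\in[0,1)\}$ collapses the integral to one over the cube $[0,1)^r$ against the Gindikin density, whose exponents are governed by the Jordan-triple multiplicities $a,b$ entering $p=(r-1)a+b+2$. The decisive local computation is the diagonal factor on the frame: for $z=\sum_j t_j e_j$ the strong orthogonality of the $\gamma_j$ forces $B(z,\li z)^{-1}=\exp\big(\sum_j(-\log(1-t_j^2))h_j\big)$, a product of commuting $\mathfrak{sl}_2$-contributions each lying in the $j$-th triple $\{h_j,e_{\pm j}\}$; applying the highest weight property $\tau(\exp X)v_\Lambda=e^{\Lambda(X)}v_\Lambda$ for $X\in\h^{\C}$ then gives the clean product
$$\langle \tau(B(z,\li z)^{-1}) v_\Lambda, v_\Lambda\rangle_\tau=\prod_{j=1}^r(1-t_j^2)^{-\Lambda(h_j)}.$$
Together with $h(z,z)^{-p}=\prod_j(1-t_j^2)^{-p}$ the whole integral becomes a Selberg/Gindikin Beta integral, and its convergence at the boundary faces $t_j\to 1$ is controlled by a single critical exponent attached to the top root $\gamma_1$. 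The contribution of the polar density is exactly what supplies the $\rho$-shift, so that the boundary convergence condition reads $(\Lambda+\rho)(h_1)<0$; the explicit evaluation is the one recorded in \cite{fako, AAR}.

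I expect the main obstacle to be bookkeeping the exponents with the precision needed to see the half-sum $\rho$. Neither the weight factor $\prod_j(1-t_j^2)^{-\Lambda(h_j)}$ nor the Gindikin density alone carries $\rho(h_1)$; it emerges only from their interplay, and one must also argue, using $K$-dominance of $\Lambda$ and the fact that $\gamma_1$ is the highest of the strongly orthogonal roots, together with the cross terms $\prod_{i<j}(t_i^2-t_j^2)^a$ in the density, that the constraint from the top root direction is the binding one and already forces convergence in every direction. The remaining point is to upgrade ``nonzero'' to ``discrete series'': once $\Hc_\Lambda\neq\{0\}$, Definition \ref{disseries} equips it with the unitary action $\pi_\Lambda$, and one must verify square-integrability of its matrix coefficients over $G$. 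Rather than re-deriving $L^2(G)$-integrability, I would invoke the general Bergman-space fact that point evaluations are automatically bounded, so that $\Hc_\Lambda$ is a reproducing-kernel Hilbert space as soon as it is nonzero, together with the standard identification of such weighted Bergman spaces with holomorphic discrete series as in \cite{hari, kora, Wa}.
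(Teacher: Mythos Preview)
The paper does not prove Theorem~\ref{hccondition}; it is quoted as a classical result with references to Harish-Chandra \cite{hari}, Kor\'anyi \cite{kora}, and Wallach \cite{Wa}. There is thus no paper-proof to compare against, and in fact later in the paper the implication is used in the opposite logical direction (the norm of the constant function $v_\Lambda$ is \emph{computed}, via the formal degree, assuming the theorem).

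That said, your outline contains a genuine gap. The integrand $\langle\tau(B(z,\li z)^{-1})v_\Lambda,v_\Lambda\rangle_\tau$ is \emph{not} $K$-invariant once $\dim V_\Lambda>1$: replacing $z$ by $k\cdot z$ turns it into $\langle\tau(B(z,\li z)^{-1})\tau(k)^{-1}v_\Lambda,\tau(k)^{-1}v_\Lambda\rangle_\tau$, and $\tau(k)^{-1}v_\Lambda$ is no longer a weight vector. Consequently the polar decomposition does not collapse the integral to the clean product $\prod_j(1-t_j^2)^{-\Lambda(h_j)}$; what survives on the radial side is a $K$-averaged mixture $\sum_s A_s\prod_j(1-t_j^2)^{-\Lambda^s(h_j)}$ over \emph{all} weights $\Lambda^s$ of $V_\Lambda$. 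To deduce convergence from the single scalar condition $(\Lambda+\rho)(h_1)<0$ you would need the highest-weight eigenvalue to dominate, i.e.\ that $\sum_j(-\log(1-t_j^2))h_j$ lies in the closed positive Weyl chamber for the \emph{compact} roots for every $t\in[0,1)^r$. This fails already for $G=SU(2,2)$: with $\gamma_1=\epsilon_1-\epsilon_4$, $\gamma_2=\epsilon_2-\epsilon_3$, the compact simple root $\epsilon_1-\epsilon_2$ takes the value $-1$ on $h_2$, so near the face $t_1=0$, $t_2\to 1$ the largest eigenvalue of $\tau(B(z,\li z)^{-1})$ is \emph{not} the one attached to $\Lambda$. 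Hence your reduction to a single Selberg integral is unjustified in the vector-valued case, and the boundary analysis must control all weights simultaneously; the difficulty you flag (``the constraint from the top root direction is the binding one'') is about the $t_j$-directions, which is a separate and easier issue.
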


The 
Hilbert space 
$\Hc_{\Lambda}$ has
reproducing kernel 
 $K_w(z)=K(z, w)=K_{\Lambda}(z, w)$ taking values in
$\mathrm{End}(V_{\Lambda})$,
holomorphic in $z$  and anti-holomorphic in $w$ 
such that for any $v \in V_{\Lambda},
f \in \Hc_{\Lambda}$, we have $K_w v \in \Hc_{\Lambda}$, and
$$ \langle f, K_w v \rangle_{\Hc_{\Lambda}} = \langle f(w), v \rangle_{\tau}.
$$
The kernel
$K$ can be computed
using the Bergman operator \cite[Paragraph 4]{kora}:
There is
a constant $C(\Lambda) > 0$, to be evaluated in Theorem \ref{constantthm},
such that
\begin{equation}
\label{kernelexpl}
 K(z,z) = C(\Lambda)
\tau(B(z,\li{z})) = 
C(\Lambda) \tau(J_g(0) J_g(0)^*)
\end{equation}
where $z = g \cdot 0$. It follows by holomorphicity in $z$ and anti-holomorphicity in $w$ that
$$ K(z,w) = C(\Lambda) \tau(B(z,\li{w})).$$
 Furthermore
$$K(g\cdot z, g \cdot w) = \tau(J_g(z)) K(z,w) \tau(J_g(w))^*.$$
From (\ref{kernelexpl}) 
we see that for any $z \in D$
$$
K(z,0) = C(\Lambda) I.$$
Thus for any $v \in V_{\Lambda}$ the constant function $v$ is in $\Hc_{\Lambda}$, as $v = C(\Lambda)^{-1} K_0 v \in \Hc_{\Lambda}$.

Furthermore, the space of $V_\Lambda$-valued polynomials
is dense in
$\Hc_\Lambda$,
and as a representation of $K$ it is
$\mathcal P\otimes V_\Lambda$ where
$
\mathcal P
$ is the space of scalar-valued polynomials;
see 
e.g. \cite{EHW}.

\section{The formal degree of the holomorphic discrete series}\label{for-deg}
The formal degree of the
discrete series
$(\Hc, \pi, G)$
of a semisimple Lie group $G$
is a proportionality constant
between the $|\langle  u, v\rangle|^2$ for $u, v\in \Hc$ 
and the $L^2(G)$-norm
square
of the matrix
coefficient
$\langle \pi(g)u, v\rangle$.
Harish-Chandra
\cite{hari} has computed
the formal degree up to a normalization constant.
We shall find  
the exact formula
for the formal degree under our normalization 
(\ref{Eucl})
above. The formal
degree will appear in the Wehrl inequality in the next Section.

\subsection{Definition of the formal degree}
Harish-Chandra \cite[Theorem 1]{hari} shows that for a holomorphic discrete series representation $\Hc_{\Lambda}$ and $f_1, f_2 \in \Hc_{\Lambda}$ there exists a positive number $d_{\Lambda}$,
called the formal degree 
of $\Hc_{\Lambda}$, 
such that
\begin{equation}
\label{formaldegreeort}
\int_G 
|\langle g \cdot f_1, f_2 \rangle_{\Hc_{\Lambda}}|^2 d g = d_{\Lambda}^{-1} \nm{f_1}^2 \nm{f_2}^2,
\end{equation}
where all the inner products
are in $\Hc_{\Lambda}$.
We now normalize  the Haar measure on $G$
so that
$$ \int_G f(g) dg = \int_D \left( \int_K f(x k) dk \right) d\iota(x),$$
where we realize $G$
as the set
$D\times K$
with the
invariant measure on $D=G/K$ 
from (\ref{bergmanmeasure}) and the Haar measure  $K$ is normalized so that
$\int_K dk =1$.

Harish-Chandra 
found a formula
for the formal degree up to
 some normalization of the Haar measure on $G$ \cite[Theorem 4]{hari}.
It is given 
by the following 
\begin{equation}
\label{HC-fml-dgr}
d_{\Lambda}^{\mathrm{H}}
:= 
(-1)^{\frac{\text{dim}G - \text{rank K}}2}
\prod_{\alpha \in \Delta^+}  \frac{\Lambda(h_{\alpha}) + \rho(h_{\alpha})}{\rho(h_{\alpha})},
\end{equation}
where $\rho = \frac{1}{2} \sum_{\alpha \in \Delta^+} \alpha$.
(Harish-Chandra's formula was 
the absolute of the above formula without the sign
$(-1)^{\frac{\text{dim}G - \text{rank K}}2}$, and
we take the sign with us to make it
a polynomial in $\Lambda$ and coincide
with the absolute value for discrete series.)

It follows that
there is a constant $c_G$ such that
\begin{equation}
\label{ourharidegree}
d_{\Lambda} = c_G \cdot d_{\Lambda}^{\mathrm{H}}.
\end{equation}
We shall find this constant by choosing scalar
representations $\tau$ of $K$ and by evaluating both degrees.

\subsection{Scalar
holomorphic discrete series}
This series
of representations
is very well understood; see e.g.
\cite{EHW, fako, Wa}.
Let 
$\lambda\in \mathbb Z_+$
be an integer and let
$\tau(k)   \coloneqq \det(\Ad(k)|_{\mathbb C^N})^{- \frac{\lambda}{p}}$, 
$k\in K$. 
Then up to a covering
of $G$ $\tau$ defines
a character of $K$,
and the covering will
have no effect on our
results as we have
fixed the integration 
of $K$ so that $\int_K dk =1$.
We see that for $H \in \h$ the scalar highest weight 
$\Lambda$ of $\tau$
is given by
$$ \Lambda(H) = \frac{d}{dt}|_{t=0} \tau(e^{tH}) = \frac{d}{dt}|_{t=0} e^{- t \frac{\lambda}{p} \Tr( \ad(H)|_{\mathbb C^N})} = - \frac{\lambda}{p} 2 \rho_n(H),$$
where $\rho_n = \frac{1}{2} \sum_{\alpha \in \Delta_n^+} \alpha$. Thus we get for $1 \leq j \leq r$ \cite[(1.4)]{kora}
$$ \Lambda(h_j) = - \lambda.$$
We shall identify the
weight $\Lambda$ with the
scalar $-\lambda$ and write
the corresponding $\tau_{\Lambda}$
as
$\tau_{-\lambda}$.
The condition
in Theorem \ref{hccondition} 
becomes
$\lambda >p-1$
; see \cite[Section 4]{kora}.
The Hibert space 
$\Hc_{\Lambda}$ is 
usually called the weighted
Bergman space with weight $\lambda -p>-1$.
The norm square
(\ref{Berg-norm})
is now given 
$$
\Vert f\Vert^2_{\Hc_{\Lambda}}
=\int_D 
|f(z)|^2
h(z, z)^{\lambda} d\iota(z)
=
\int_D |f(z)|^2 h(z, z)^{\lambda-p} dm(z),
$$
with $\tau_{- \lambda}(B(z, z))^{-1} =h(z, z)^\lambda$,
and the
representation $\pi_{\Lambda}$ becomes
$$
\pi_{\Lambda}(g) f(z)
=\det(J_{g^{-1}}(z))
^{\frac{\lambda}{p}}
f(g^{-1}z), \quad g\in G.
$$
The following
result follows
easily from
the definition and
the mean value
property of
holomorphic functions.
\begin{lemma}
\label{constantasint}
Let $\lambda >p-1$
and $\Lambda$ be as above.
For the representation $\tau(k) = \tau_{-\lambda}(k)$ the formal dimension $d_{\Lambda}$ of $\Hc_{\Lambda}$ is given by
$$d_{\Lambda}^{-1} = \int_G h(g \cdot 0, g \cdot 0)^{\lambda} d \iota(g) = \int_D h(z,z)^{\lambda - p} dm(z).$$
\end{lemma}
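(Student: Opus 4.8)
The plan is to evaluate the defining integral (\ref{formaldegreeort}) for the formal degree on the single most convenient pair $f_1=f_2=\mathbf{1}$, the constant function equal to $1\in V_\Lambda=\mathbb C$, which lies in $\Hc_\Lambda$ by the remark following (\ref{kernelexpl}). Write
$$c:=\int_D h(z,z)^{\lambda-p}\,dm(z)$$
for the quantity to be identified with $d_\Lambda^{-1}$; it is finite precisely because $\lambda>p-1$. The point is that everything reduces to evaluating a holomorphic function at the origin.

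First I would record the \emph{mean value property} in the form needed here. Since $D\subset\mathbb C^N$ is circular and the weight $h(z,z)^{\lambda-p}$ is invariant under $z\mapsto e^{i\theta}z$ (because $h(z,z)=\prod_j(1-|\lambda_j|^2)$ depends only on the moduli of the spectral coordinates), averaging $\int_D f(z)h(z,z)^{\lambda-p}\,dm(z)$ over the circle action replaces $f$ by its rotational mean, i.e.\ by $f(0)$. Hence for every $f\in\Hc_\Lambda$,
$$\langle f,\mathbf{1}\rangle_{\Hc_\Lambda}=\int_D f(z)\,h(z,z)^{\lambda-p}\,dm(z)=f(0)\,c ,$$
so in particular $\Vert\mathbf{1}\Vert_{\Hc_\Lambda}^2=c$. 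Applying this to $f=\pi_\Lambda(g)\mathbf{1}$ and using the explicit scalar form of $\pi_\Lambda$ with $\mathbf{1}(g^{-1}z)=1$ gives
$$\langle\pi_\Lambda(g)\mathbf{1},\mathbf{1}\rangle_{\Hc_\Lambda}=c\,(\pi_\Lambda(g)\mathbf{1})(0)=c\,\det\!\big(J_{g^{-1}}(0)\big)^{\lambda/p}.$$
Taking absolute squares and invoking (\ref{Bzzeq}) together with $\det B(z,\li{z})=h(z,z)^p$ (from (\ref{berg-kernel-h-p})) converts the Jacobian factor into $h$: since $|\det J_{g^{-1}}(0)|^2=\det B(g^{-1}\cdot 0,\li{g^{-1}\cdot 0})=h(g^{-1}\cdot 0,g^{-1}\cdot 0)^p$, we obtain
$$|\langle\pi_\Lambda(g)\mathbf{1},\mathbf{1}\rangle_{\Hc_\Lambda}|^2=c^2\,h(g^{-1}\cdot 0,g^{-1}\cdot 0)^{\lambda}.$$

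Finally I would integrate over $G$. By unimodularity of $G$ the substitution $g\mapsto g^{-1}$ is measure-preserving, and the chosen Haar normalization $\int_G F(g)\,dg=\int_D\int_K F(xk)\,dk\,d\iota(x)$ applied to the right-$K$-invariant function $F(g)=h(g\cdot 0,g\cdot 0)^\lambda$ (right-$K$-invariant since $K$ fixes $0$), together with $d\iota(z)=h(z,z)^{-p}\,dm(z)$, yields
$$\int_G h(g\cdot 0,g\cdot 0)^\lambda\,dg=\int_D h(z,z)^\lambda\,d\iota(z)=\int_D h(z,z)^{\lambda-p}\,dm(z)=c,$$
which is already the first claimed equality. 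Thus the left side of (\ref{formaldegreeort}) equals $c^2\cdot c=c^3$, whereas the right side is $d_\Lambda^{-1}\Vert\mathbf{1}\Vert^4=d_\Lambda^{-1}c^2$; comparing gives $d_\Lambda^{-1}=c$, proving the lemma.

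I expect the only genuinely delicate point to be the precise justification of the mean value reduction $\langle f,\mathbf{1}\rangle=f(0)\,c$: one must verify that the circular symmetry of $D$ and the rotation-invariance of the weight force the integral of every positive-degree homogeneous component of $f$ to vanish, and that $f\in\Hc_\Lambda$ has a convergent homogeneous expansion for which the circle-averaging (Fubini) is legitimate — both guaranteed by density of the $V_\Lambda$-valued polynomials in $\Hc_\Lambda$ and the convergence ensured by $\lambda>p-1$. Everything else is bookkeeping of the automorphy factor through (\ref{Bzzeq}) and the Haar normalization.
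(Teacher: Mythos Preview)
Your proof is correct and follows essentially the same route as the paper's: choose $f_1=f_2=\mathbf{1}$, use the mean value property to reduce $\langle\pi_\Lambda(g)\mathbf{1},\mathbf{1}\rangle$ to evaluation at $0$, convert the Jacobian factor to $h(g^{-1}\cdot 0,g^{-1}\cdot 0)^\lambda$ via (\ref{Bzzeq}) and (\ref{berg-kernel-h-p}), and then integrate over $G$ using the chosen Haar normalization. Your justification of the mean value step (circular symmetry of $D$ and rotation-invariance of the weight) is more explicit than the paper's one-line appeal to ``$K$-invariance'' and the mean value property, but the argument is the same.
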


\begin{proof} We take
$f_1=f_2=1$ 
in Equation
(\ref{formaldegreeort}).
The LHS becomes
$$  \int_{G} | \langle \pi_{\Lambda}(g) 1, 1 \rangle_{\Hc_{\Lambda}} |^2 dg,$$
and the integrand
is by $K$-invariance
$$ | \langle \pi_{\Lambda}(g) 1, 1 \rangle_{\Hc_{\Lambda}} |^2 = |(\pi_{\Lambda}(g) 1) (0) |^2 \iota_{\lambda}(D)^2,$$
where 
$\iota_\lambda(D) 
=\int_D h(z, z)^\lambda d\iota(z).$
Moreover by (\ref{Bzzeq}),
and (\ref{berg-kernel-h-p}),
$$|(\pi_{\Lambda}(g) 1) (0)|^2 = |\det(J_{g^{-1}}(0))|^{\frac{2 \lambda}{p}} = |h(g^{-1} \cdot 0, g^{-1} \cdot 0)|^{\lambda}$$
so that
the LHS is
\begin{equation*}\begin{split}
&\iota_{\lambda}(D)^2 
\int_G h(g^{-1} \cdot 0, g^{-1} \cdot 0)^{\lambda} d \iota(g)
 = 
\iota_{\lambda}(D)^2 \int_G h(g \cdot 0, g \cdot 0)^{\lambda} d \iota(g)
\\
&=
\iota_{\lambda}(D)^2 
\int_D h(z, z)^{\lambda}d\iota(z),
\end{split}\end{equation*}
by our normalization
of $dg$. The RHS of
(\ref{formaldegreeort})
is
$d_\Lambda^{-1}\iota_{\lambda}(D)^2$, and
our claims follows.
\end{proof}

\subsection{Evaluation of the 
constant $c_G$}
We will use Lemma \ref{constantasint} to find the exact value of $d_{\Lambda}$ for the scalar representation $\tau_{-\lambda}$
and further
for general
discrete series.
First we need some notation \cite{fako}. Let
$$ \Gamma_{a}(\mathbf{s}) \coloneqq 
 \prod_{j=1}^r \Gamma(s_j - (j-1) \frac{a}{2})$$
 be Gindikin's Gamma function
 associated with the root
 multiplicity $a$ (without the factor $(2 \pi)^{\frac{n_1 - r}{2}}$), 
for  vectors $\mathbf{s} = (s_1, \dots, s_r)$ and $\Gamma_{a}(\lambda) \coloneqq \Gamma_{a}( (\lambda, \dots, \lambda))$. Thus
$$ \frac{\Gamma_{a}(\lambda - \frac{n}{r})}{\Gamma_{a}(\lambda)} = \prod_{j=1}^r \frac{\Gamma( \lambda - \frac{N}{r} - (j-1)\frac{a}{2} )}{\Gamma( \lambda - (j-1)\frac{a}{2})}.$$
A sketch for the 
evaluation of the integral $ d_{\Lambda}^{-1} $
was given \cite[Theorem 3.6]{fako};
we give a detailed proof 
by using
the known evaluation formula for the Selberg integral \cite{AAR}
as they are of importance for our main results.

\begin{proposition}
If $\tau = \tau_{- \lambda}$ with $\lambda > p - 1$ then we have
$$ d_{\Lambda}^{-1} 
= \int_D h(z)^{\lambda - p} d m(z) = \frac{\pi^N \Gamma_{a}(\lambda - \frac{N}{r})}{\Gamma_{a}(\lambda)}.$$
\end{proposition}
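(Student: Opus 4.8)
The plan is to reduce the integral to a Selberg integral via the polar (spectral) decomposition of $D$, and then repackage the resulting product of Gamma functions as a ratio of Gindikin $\Gamma_a$'s. Since the integrand is $K$-invariant and $h(z,\li z)^{\lambda-p}=\prod_{j=1}^r(1-t_j^2)^{\lambda-p}$ depends only on the spectral values $t_j=|\lambda_j|$ of $z=\Ad(k)(\sum_j t_j e_j)$, I first pass to polar coordinates on $D=\Ad(K)\{\sum_j t_j e_j : t_j\in[0,1)\}$. Integrating out the $K$-orbit and the phases, the Euclidean measure $dm$ normalized by (\ref{Eucl}) produces, up to an explicit constant $c_{a,b,r}$, the density $\prod_{i<j}(t_i^2-t_j^2)^a\prod_j t_j^{2b+1}$ on $[0,1)^r$ (this is the standard polar Jacobian for bounded symmetric domains; see \cite{loos,fako}). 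A quick consistency check is that the total exponent $a r(r-1)+2rb+r$ equals the orbit-codimension count $2N-r$. Thus
\[
\int_D h(z,\li z)^{\lambda-p}\,dm(z)=c_{a,b,r}\int_{[0,1)^r}\prod_{j=1}^r(1-t_j^2)^{\lambda-p}\,t_j^{2b+1}\prod_{i<j}(t_i^2-t_j^2)^a\,dt.
\]

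Next I substitute $u_j=t_j^2$, turning the right-hand side into the Selberg integral with parameters $\alpha=b+1$, $\beta=\lambda-p+1$, $\gamma=a/2$, namely $2^{-r}c_{a,b,r}\int_{[0,1]^r}\prod_j u_j^{\alpha-1}(1-u_j)^{\beta-1}\prod_{i<j}|u_i-u_j|^{2\gamma}\,du$. Note the convergence condition $\beta>0$ is exactly the hypothesis $\lambda>p-1$. Applying the Selberg evaluation \cite{AAR} gives an explicit product of Gamma functions, and the decisive point is that the $\lambda$-dependent factors collapse onto Gindikin's function. Using $p=(r-1)a+b+2$ and $N/r=1+a(r-1)/2+b$, the Selberg denominator factors reindex as $\Gamma\bigl(\alpha+\beta+(r+j-1)\tfrac a2\bigr)=\Gamma\bigl(\lambda-(r-1-j)\tfrac a2\bigr)$, whose product over $j=0,\dots,r-1$ is $\prod_{k=0}^{r-1}\Gamma(\lambda-k\tfrac a2)=\Gamma_a(\lambda)$; similarly the numerator factors $\Gamma(\beta+j\tfrac a2)$ reindex to $\prod_{k=0}^{r-1}\Gamma(\lambda-\tfrac Nr-k\tfrac a2)=\Gamma_a(\lambda-\tfrac Nr)$. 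This produces exactly the ratio $\Gamma_a(\lambda-N/r)/\Gamma_a(\lambda)$ claimed.

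The main obstacle, and essentially the only nonroutine point, is to show that all the remaining $\lambda$-independent constants — the polar Jacobian constant $c_{a,b,r}$, the factor $2^{-r}$, and the $\alpha$-dependent factors $\Gamma(\alpha+j\gamma)$ together with $\Gamma(1+(j+1)\gamma)/\Gamma(1+\gamma)$ from Selberg — multiply out to precisely $\pi^N$. I would settle this by evaluating $c_{a,b,r}$ directly from the normalization (\ref{Eucl}) (fixing the $K$-orbit and phase volumes so that $\nm{e_j}=1$) and then carrying out the Gamma-function bookkeeping; as a sanity check, in the rank-one case $D$ is the unit ball in $\mathbb C^N$ and the integral is the classical $\pi^N\,\Gamma(\lambda-N)/\Gamma(\lambda)$, which matches $\Gamma_a(\lambda-N/r)/\Gamma_a(\lambda)$ at $r=1$ and confirms the constant. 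The remaining verification of the identity of constants in general rank is then purely computational.
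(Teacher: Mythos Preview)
Your approach is essentially identical to the paper's: polar decomposition on $D$, the substitution $u_j=t_j^2$, and the Selberg integral evaluation, followed by the same reindexing of Gamma factors into $\Gamma_a(\lambda)/\Gamma_a(\lambda-N/r)$. The one place where you diverge is in handling the $\lambda$-independent constant $c_{a,b,r}$, and here you have a gap: you merely propose to compute it ``directly from the normalization'' by fixing $K$-orbit and phase volumes, without actually doing so. A rank-one sanity check does not determine the constant in general rank.

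The paper resolves this cleanly and you should note the trick: rather than computing the orbit volume directly, one integrates the $K$-invariant Gaussian $e^{-\|z\|^2}$ over all of $\mathbb C^N$ in two ways. In Cartesian coordinates the answer is $\pi^N$; in polar coordinates it is $c_{a,b,r}$ times a known Laguerre-type Selberg integral (\cite[Corollary 8.2.2]{AAR}), namely $\prod_{j=1}^r\Gamma(b+1+(j-1)\tfrac a2)\Gamma(1+j\tfrac a2)/\Gamma(1+\tfrac a2)$. Equating the two gives $c_{a,b,r}$ explicitly, and then the $\lambda$-independent Gamma factors from the Selberg evaluation over $[0,1]^r$ cancel exactly against those in $c_{a,b,r}$, leaving $\pi^N$. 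This avoids any direct computation of orbit volumes and makes the ``purely computational'' step you defer into a two-line cancellation.
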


\begin{proof}
The first equality is Lemma \ref{constantasint}. 
We evaluate 
the integral by starting with the polar decomposition \cite[Chapter I, Theorem 5.17]{helgGGA} for $\mathbb C^N$,
\begin{equation*} \int_{\mathbb C^N} f(z) dm(z)
 = C \int_0^{\infty} \dots \int_0^{\infty} \int_K f(k \cdot (t_1 e_1 + \dots t_r e_r)) dk 2^r \prod_j t_j^{2b+1} \prod_{j<k} |t_j^2 - t_k^2|^a dt_1 \dots dt_r,
\end{equation*}
for some  constant $C$.
We calculate the exact value of this constant $C$. Let $f$ be the $K$-invariant
Gaussian function $f(z) = e^{-\nm{z}^2}$, then
\begin{equation*}\begin{split}
 \pi^N &= \int_{\mathbb C^N} e^{-\nm{z}^2} dm(z)
\\ &  = C \int_0^{\infty} \dots \int_0^{\infty}  e^{-(t_1^2 \dots + t_r^2)} 2^r \prod t_j^{2b + 1} \prod_{j<k} |t_j^2 - t_k^2|^a dt_1 \dots dt_r
\\ & = C \int_0^{\infty} \dots \int_0^{\infty}  e^{-(s_1 \dots + s_r)} \prod s_j^{b} \prod_{j<k} |s_j - s_k|^a ds_1 \dots ds_r.
\end{split}
\end{equation*}
From \cite[Corollary 8.2.2]{AAR} we find
\begin{equation*}\begin{split}
& \int_0^{\infty} \dots \int_0^{\infty}  e^{-(s_1 \dots + s_r)} \prod s_j^{b} \prod_{j<k} |s_j - s_k|^a ds_1 \dots ds_r
\\ & = \prod_j^r \frac{\Gamma(b+1+(j-1)\frac{a}{2}) \Gamma(1 + j \frac{a}{2})}{\Gamma(1 + \frac{a}{2})}.
\end{split}
\end{equation*}
Hence we have
\begin{equation}
\label{Const-C}
C = \pi^{N} \prod_j^r \frac{\Gamma(1 + \frac{a}{2})}{\Gamma(b+1+(j-1)\frac{a}{2}) \Gamma(1 + j \frac{a}{2})}.
\end{equation}
It  follows that
\begin{equation*}\begin{split}
& \int_{D} h(z)^{\lambda - p} d m(z)
\\ &  = C \int_0^{1} \dots \int_0^{1} h(t_1 e_1 + \dots t_r e_r)^{\lambda-p} 2^r \prod_j t_j^{2b+1} \prod_{j<k} |t_j^2 - t_k^2|^a dt_1 \dots dt_r
\\ & = C \int_0^{1} \dots \int_0^{1} \prod_j (1 - s_j)^{\lambda-p} \prod_j s_j^{b} \prod_{j<k} |s_j - s_k|^a ds_1 \dots ds_r.
\end{split}
\end{equation*}
This is a Selberg integral and
is evaluated by  \cite[Theorem 8.1.1]{AAR}
\begin{equation}
\begin{split}
\label{Selberg}
& \int_0^{1} \dots \int_0^{1} \prod_j (1 - s_j)^{\lambda-p} \prod_j s_j^{b} \prod_{j<k} |s_j - s_k|^a ds_1 \dots ds_r.
\\ & = \prod_{j=1}^r \frac{\Gamma(b+1+(j-1)\frac{a}{2}) \Gamma(\lambda - p + 1 + (j-1)\frac{a}{2}) \Gamma(1 + j \frac{a}{2})}{\Gamma(\lambda - p + b + 2 + (r + j - 2)\frac{a}{2})\Gamma(1 + \frac{a}{2})}.
\end{split}
\end{equation}
Hence
\begin{equation*}\begin{split}
 d_{\Lambda}^{-1} &= \int_{D} h(z)^{\lambda - p} d m(z)
\\ & = C \prod_{j=1}^r \frac{\Gamma(b+1+(j-1)\frac{a}{2}) \Gamma(\lambda - p + 1 + (j-1)\frac{a}{2}) \Gamma(1 + j \frac{a}{2})}{\Gamma(\lambda - p + b + 2 + (r + j - 2)\frac{a}{2})\Gamma(1 + \frac{a}{2})}
\\ & = \pi^N \prod_{j=1}^r \frac{\Gamma(\lambda - p + 1 + (j-1)\frac{a}{2})}{\Gamma(\lambda - p + b + 2 + (r + j - 2)\frac{a}{2}) }
\\ & = \pi^N\frac{\Gamma_{a}(\lambda - \frac{N}{r})}{\Gamma_{a}(\lambda)}.
\end{split}
\end{equation*}
\end{proof}

Now we can finally find the exact
value of the constant $c_G$.
Recall the Pochammer symbol
 $(x)_k= x(x+1)\cdots (x+k-1)$.

\begin{proposition}
\label{constanthcvsour}
With  our normalization of the Haar measure  the formal 
degree
 is given by
 $$
d_{\Lambda} = c_G d_{\Lambda}^{\mathrm{H}},$$
where $d_{\Lambda}^{\mathrm{H}}$ 
is given by  \cref{HC-fml-dgr} and
$$ c_G =\pi^{-N} r! \prod_{i=1}^{r-1}
(2+i)_i.$$
for $G = Sp(n, \mathbb R)$,
$$c_G = \pi^{-N} (m - \frac{1}{2}) (2m-2)!$$
for $G = SO_0(2,2m-1)$,
and
$$c_G = \pi^{-N} \prod_{j=1}^r \frac{\Gamma(\frac{N}{r} + (j-1)\frac{a}{2} + 1)}{\Gamma(1 + (j-1) \frac{a}{2})} = \pi^{-N} \prod_{j=1}^r (1 + (j-1)\frac{a}{2})_{\frac{N}{r}}$$
for all other irreducible Hermitian Lie groups.
\end{proposition}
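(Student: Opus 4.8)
The starting point is that by \eqref{ourharidegree} the constant $c_G = d_\Lambda/d_\Lambda^{\mathrm H}$ is independent of the highest weight $\Lambda$, so it suffices to evaluate it on the scalar series $\tau=\tau_{-\lambda}$, $\lambda>p-1$, where both degrees are accessible. On one side the preceding Proposition gives the exact value
\[
d_\Lambda = \pi^{-N}\,\frac{\Gamma_a(\lambda)}{\Gamma_a(\lambda-\tfrac Nr)},
\]
and on the other the Harish-Chandra formula \eqref{HC-fml-dgr} expresses $d_\Lambda^{\mathrm H}$ as a finite product over $\Delta^+$. The plan is to bring $d_\Lambda^{\mathrm H}$ into the same Gamma-function shape and read off $c_G$ as the quotient, using that the $\lambda$-dependence must cancel since $c_G$ is constant.

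First I would discard the compact roots. As $\tau_{-\lambda}$ is one-dimensional its weight $\Lambda$ kills $[\kf,\kf]\cap\h$, so $\Lambda(h_\alpha)=0$ and the factor $\tfrac{\Lambda(h_\alpha)+\rho(h_\alpha)}{\rho(h_\alpha)}$ equals $1$ for every $\alpha\in\Delta_c^+$. Hence the product in \eqref{HC-fml-dgr} collapses to one over the $N$ roots of $\Delta_n^+$, and, since for the holomorphic series $(\Lambda+\rho)(h_\alpha)<0$ for each such $\alpha$ while $\rho(h_\alpha)>0$, the prefactor sign is fixed so that $d_\Lambda^{\mathrm H}>0$. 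I would then evaluate $(\Lambda+\rho)(h_\alpha)$ and $\rho(h_\alpha)$ over the three families of noncompact positive roots attached to the strongly orthogonal system $\{\gamma_i\}$ — the $\gamma_i$ themselves, the $\tfrac12(\gamma_i+\gamma_j)$ with multiplicity $a$, and the $\tfrac12\gamma_k$ with multiplicity $b$ — writing each factor as an affine function of $\lambda$. Collecting these factors family by family, the $\lambda$-dependent numerators telescope into exactly $\Gamma_a(\lambda)/\Gamma_a(\lambda-\tfrac Nr)$ up to a $\lambda$-free constant built from $\prod_{\alpha\in\Delta_n^+}\rho(h_\alpha)$. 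Dividing $d_\Lambda$ by $d_\Lambda^{\mathrm H}$ the Gamma quotients cancel and leave the generic value
\[
c_G=\pi^{-N}\prod_{j=1}^r\frac{\Gamma\!\left(\tfrac Nr+(j-1)\tfrac a2+1\right)}{\Gamma\!\left(1+(j-1)\tfrac a2\right)}=\pi^{-N}\prod_{j=1}^r\Bigl(1+(j-1)\tfrac a2\Bigr)_{N/r}.
\]
For $G=Sp(n,\mathbb R)$ and $G=SO_0(2,2m-1)$ the multiplicity $a$ is odd and $\tfrac Nr=\tfrac{p+b}2$ is a half-integer, so the Pochhammer symbols above are no longer products of integers and the generic expression must be evaluated through $\Gamma$ at half-integer points; here I would invoke the Legendre duplication formula, which clears the $\sqrt\pi$ factors and reduces the product to the stated closed forms.

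The step I expect to be the real work is the middle one: tracking $(\Lambda+\rho)(h_\alpha)$ across the three noncompact families with the correct multiplicities and confirming that the result genuinely reorganizes into the Gindikin Gamma quotient. This is delicate precisely because the bookkeeping depends on the ambient root system — in the $C_r$ case the roots $\tfrac12(\gamma_i+\gamma_j)$ are short and contribute an extra factor of $2$ per root, which is exactly what forces the separate treatment of $Sp(n,\mathbb R)$ — so the argument naturally breaks into the stated cases. A secondary point requiring care is the overall sign: one must check that the prefactor in \eqref{HC-fml-dgr} has the parity making the product of the $N$ negative noncompact factors positive, in accordance with $d_\Lambda=c_G\,d_\Lambda^{\mathrm H}$ and $c_G>0$.
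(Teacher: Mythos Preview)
Your overall strategy coincides with the paper's: exploit that $c_G$ is independent of $\Lambda$, specialize to the scalar series $\tau_{-\lambda}$ where $d_\Lambda$ is given by the Selberg-integral Proposition, and compare with the Harish-Chandra product. Your observation that compact roots contribute trivially (since $\Lambda$ vanishes on $[\kf,\kf]\cap\h$) is correct and is implicit in the paper as well.

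Where you diverge is in the extraction step. You propose to reorganize the noncompact factors of $d_\Lambda^{\mathrm H}$ by restricted-root families so that they telescope into the Gindikin quotient $\Gamma_a(\lambda)/\Gamma_a(\lambda-\tfrac Nr)$, and then handle the two odd-$a$ cases via Legendre duplication. The paper avoids this entirely: since $c_G$ is constant, both $d_\Lambda$ and $d_\Lambda^{\mathrm H}$ are degree-$N$ polynomials in $\lambda$ that are proportional, so it suffices to compare their \emph{leading coefficients}. The paper simply computes $\Lambda(h_\alpha)=-\lambda$ for long noncompact roots and $-2\lambda$ for short ones (via the $Z$-projection of $h_\alpha$), reads off the leading coefficient of $d_\Lambda^{\mathrm H}$ as $\prod_{\alpha\in\Delta_n^+}\rho(h_\alpha)^{-1}$ (times a power of $2$ in the short-root cases), and divides into the leading coefficient $\pi^{-N}$ of $d_\Lambda$. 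This sidesteps the telescoping and the duplication formula altogether and is exactly the ``real work'' you flagged as not yet done. Your route would work, but it is the harder of the two, and the part you left open is precisely the part the paper's leading-coefficient trick renders unnecessary.
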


\begin{proof}
The constant $c_G$
is independent of $\Lambda$
and can be found
by choosing the representations $\tau_{-\lambda}$ 
above. First we investigate $d_{\Lambda}^{\mathrm{H}}$
by evaluating 
$\Lambda(h_\alpha)$ on the co-roots $h_\alpha$.
From \cite[(1.4)]{kora} we see that for any of the strongly orthogonal co-roots $h_j$ we have
$$ \Lambda(h_j)
= - \frac{\lambda}{p} 2 \rho_n(h_j) = - \lambda.$$
In fact, using that
$$ \kf = \C Z \oplus [\kf, \kf]$$
is an orthogonal decomposition, and the fact that $\Lambda|_{[\kf, \kf]} = 0$, we get that 
$$ -\lambda = \Lambda(h_1) = \Lambda( \frac{ \langle h_{1}, Z \rangle}{\langle Z,Z \rangle} Z) = \frac{2 \gamma_1(Z)}{\langle \gamma_1, \gamma_1 \rangle \langle Z,Z \rangle} \Lambda(Z) = - \frac{2 i \Lambda(Z)}{\langle \gamma_1, \gamma_1 \rangle \langle Z,Z \rangle}.$$
Thus for any root $\alpha \in \Delta^+$ we get that
$$ \Lambda(h_{\alpha}) = \Lambda( \frac{ \langle h_{\alpha}, Z \rangle}{\langle Z,Z \rangle} Z) = \frac{2\alpha(Z) \Lambda(Z) }{\langle Z, Z \rangle \langle \alpha, \alpha \rangle} = - \frac{2i \Lambda(Z) }{\langle Z, Z \rangle \langle \alpha, \alpha \rangle} = - \lambda \frac{ \langle \gamma_1, \gamma_1 \rangle}{\langle \alpha, \alpha \rangle}.$$
By \cite{schli} there are short and long roots and the strongly orthogonal roots are always long. Say they are of length $l$, then the short roots are of length $\frac{l}{\sqrt{2}}$. Then
$$ \Lambda(h_{\alpha}) = - \lambda$$
if $\alpha$ is long, and
$$ \Lambda(h_{\alpha}) = - 2 \lambda$$
if $\alpha$ is short.

We compare 
$d_{\Lambda}^{
\mathrm{H}}$
with $d_{\Lambda}$
as polynomials of $\lambda$.
We divide $\g$ into three cases
depending on the 
multiplicity being $a=1$, $a>1$ odd, and  even. 
See e.g \cite{EHW}
for a list of 
all irreducible Hermitian Lie groups $G$.


Case 1: 
$G=Sp(r,\mathbb R)$. Here  $a=1$ and $\g^{\mathbb C}=\mathfrak{sp}(r, \mathbb C)$
has
$\Delta^+
=\{2\epsilon_j\}_{j=1}^m
\cup \{\epsilon_i\pm \epsilon_j
\}_{1\le i<j\le }$,
 the Harish-Chandra roots
are $\{2\epsilon_j\}_{j=1}^m$,
$\rho= \sum_{i=1}^r (r+1-i)\epsilon_i$,
and  $\Lambda =-\frac{\lambda}{2}(
\epsilon_1 +\cdots+
\epsilon_1).
$
We have
$$
d_{\Lambda}
= \pi^{-N}
\prod_{i=1}^r 
\frac{  \Gamma(\lambda -\frac 12 (i-1))
}
{
 \Gamma(\lambda -\frac 12 (r+i))
}
$$
is a polynomial of leading constant $ \pi^{-N}$.
The Harish-Chandra formal degree 
is
$$
d_{\Lambda}^{
\mathrm{H}} 
= \prod_{i=1}^r\frac{2 \lambda-(r+1-i)}{r+1-i}
\prod_{1\le i<j\le r}
\frac{\lambda-(r+1-\frac{i+j}2)}
{r+1-\frac{i+j}2}.
$$
Comparing this with $d_{\Lambda}$
we find
$$
c_G =\pi^{-N} r! \prod_{1\le i<j\le r}
(r+1-\frac{i+j}2)
= \pi^{-N} 2^{-\frac{(r-1)r}2} r!
\prod_{i=1}^{r-1}(2+i)_i.
$$

Case 2: 
$G=SO_0(2,2m-1)$. Here $r=2$,
$N=2m-1$, $a=2m-3$ is odd, and $\frac{N}{r}$ is not an integer. We get
\begin{equation*}
 d_{\Lambda} = \pi^{-N} 
\frac{\Gamma(\lambda) \Gamma(\lambda - \frac{2m-3}{2})}{\Gamma(\lambda - 1 - \frac{2m-3}{2}) \Gamma(\lambda - 2m + 2)} = \pi^{-N} (\lambda - m + \frac{1}{2})(\lambda - 2m + 2) \dots (\lambda-1),
\end{equation*}
The root system of 
$\g$ has
 $\Delta^+
=\{\epsilon_j\}_{j=1}^m
\cup \{\epsilon_i\pm \epsilon_j
\}_{1\le i<j\le }$
with the Harish-Chandra strongly
orthogonal roots being $\epsilon_1+\epsilon_2,
\epsilon_1-\epsilon_2$.
Now $\rho=\frac12 \sum_{i=1}^m (2m+1-2i)\epsilon_i$ and comparing the two polynomials we find
$$c_G =
\pi^{-N} (m- \frac{1}{2})(2m-2)!.$$

Case 3: The remaining cases.
All roots are of the same (long) length
as the Harish-Chandra roots, 
with $a$ being
even and  $N/r$  an integer \cite{EHW}.
We have
\begin{equation*}
\begin{split}
d_{\Lambda} &= \pi^{-N}
\frac{\Gamma_{a}(\lambda)}{\Gamma_{a}(\lambda - \frac{N}{r})} = \pi^{-N}
\prod_{j=1}^r \frac{\Gamma(\lambda - (j-1)\frac{a}{2})}{\Gamma(\lambda - \frac{N}{r} - (j-1)\frac{a}{2})}
\\
&= \pi^{-N}
\frac{\Gamma_{a}(\lambda)}{\Gamma_{a}(\lambda - \frac{N}{r})} = \pi^{-N} \prod_{j=1}^r \frac{\Gamma(\lambda - (j-1)\frac{a}{2})}{\Gamma(\lambda - \frac{N}{r} - (j-1)\frac{a}{2})}
\\ 
& = 
\pi^{-N}\prod_{j=1}^r 
(\lambda - (j-1) \frac{a}{2} - \frac{N}{r})_{\frac{N}r}
\end{split}
\end{equation*}
and as a polynomial of $\lambda$ its zeros
are all given and it has leading coefficient $\pi^{-N}$.
Also,
$$ d_{\Lambda}^{
\mathrm{H}} 
= \prod_{\alpha \in \Delta^+}  \frac{\Lambda(h_{\alpha}) + \rho(h_{\alpha})}{\rho(h_{\alpha})} = \prod_{\alpha \in \Delta^+} \frac{- \lambda + \rho(h_{\alpha})}{\rho(h_{\alpha})}$$
is a polynomial with the coefficient of the leading term being $\prod_{\alpha \in \Delta^+} \rho(h_{\alpha})^{-1}$
and zeros $\rho(h_\alpha)$. 
It follows that the  product of zeros is
$$ \prod_{j=1}^r \frac{ \Gamma(\frac{N}{r} + (j-1)\frac{a}{2} + 1)}{\Gamma(1 + (j-1)\frac{a}{2})}.$$
Consequently
$$ c_G = \pi^{-N} \prod_{j=1}^r \frac{\Gamma(\frac{N}{r} + (j-1)\frac{a}{2} + 1)}{\Gamma(1 + (j-1) \frac{a}{2})} 
= \pi^{-N} \prod_{j=1}^r (1 + (j-1)\frac{a}{2})_{\frac{N}{r}}.$$
This finishes the proof.
\end{proof}

As a corollary we can find $\langle v,v \rangle_{\Hc_{\Lambda}}$ for $v \in V_{\Lambda}$, the constant $C(\Lambda)$ and a precise
formula for the
reproducing kernel.

\begin{theorem}
\label{constantthm}
Let $\Lambda$ be as above. Then for any unit vector $v \in V_{\Lambda}$
$$ \langle v, v \rangle_{\Hc_{\Lambda}} = d_{\Lambda}^{-1}.$$
Furthermore, the reproducing kernel for
the space $\Hc_{\Lambda}$
is given by
$$
K(z, w) = d_{\Lambda} \tau(B(z, \li{w})).
$$
\end{theorem}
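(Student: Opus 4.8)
The plan is to observe that the two assertions are equivalent and then to compute the single constant they determine. By \eqref{kernelexpl} the reproducing kernel has the form $K(z,w)=C(\Lambda)\tau(B(z,\li{w}))$, so in particular $K(z,0)=C(\Lambda)I$ and for a unit vector $v\in V_\Lambda$ the constant function $v$ equals $C(\Lambda)^{-1}K_0v$. The reproducing property then gives $C(\Lambda)\langle v,v\rangle_{\Hc_\Lambda}=\langle v,K_0v\rangle_{\Hc_\Lambda}=\langle v(0),v\rangle_{\tau}=1$, so that $\langle v,v\rangle_{\Hc_\Lambda}=C(\Lambda)^{-1}$. Hence the first claim is exactly the identity $C(\Lambda)=d_\Lambda$, and the kernel formula $K(z,w)=d_\Lambda\tau(B(z,\li{w}))$ is then immediate. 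So I would reduce the whole theorem to showing $C(\Lambda)=d_\Lambda$.

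To pin down this constant I would feed the constant function into the defining orthogonality relation \eqref{formaldegreeort}, taking $f_1=f_2=v$. Writing $v=C(\Lambda)^{-1}K_0v$ and using the definition of $\pi_\Lambda$, the matrix coefficient becomes $\langle\pi_\Lambda(g)v,v\rangle_{\Hc_\Lambda}=C(\Lambda)^{-1}\langle\tau(J_{g^{-1}}(0)^{-1})v,v\rangle_{\tau}$. Substituting into \eqref{formaldegreeort}, cancelling the factors of $C(\Lambda)$, and using inversion-invariance of the Haar measure, the problem collapses to evaluating $\int_G|\langle\tau(J_g(0)^{-1})v,v\rangle_{\tau}|^2\,dg$ and comparing it with $d_\Lambda^{-1}$.

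The key computation is to factor $g$ through the realization $G\cong D\times K$ underlying the Haar normalization. Writing $g=s(x)k$ with $s(x)\cdot 0=x$, the cocycle property of the automorphy factor together with $J_k(0)=k$ gives $J_g(0)=J_{s(x)}(0)\,k$, which lets me separate the $K$-variable. Schur orthogonality on the compact group $K$ (with $\int_K dk=1$) then collapses the $k$-integral, producing a factor $1/\dim V_\Lambda$ and leaving $\|\tau(J_{s(x)}(0)^{-1})v\|^2$. Finally \eqref{Bzzeq}, namely $B(x,\li{x})=J_{s(x)}(0)J_{s(x)}(0)^{*}$, together with the compatibility of the $K^{\C}$-action with the inner product, identifies $\|\tau(J_{s(x)}(0)^{-1})v\|^2=\langle\tau(B(x,\li{x})^{-1})v,v\rangle_{\tau}$, so the surviving $x$-integral is precisely $\langle v,v\rangle_{\Hc_\Lambda}=C(\Lambda)^{-1}$. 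Collecting the constants then yields the relation between $C(\Lambda)$, $d_\Lambda$ and $\dim V_\Lambda$, which I would match against the explicit value of $d_\Lambda$ from Proposition \ref{constanthcvsour}.

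I expect the main obstacle to be the careful bookkeeping of these normalization constants. In particular, the $K$-Schur step visibly introduces $\dim V_\Lambda$, and the delicate point is reconciling this with the Harish-Chandra normalization of $d_\Lambda$ fixed via $c_G$ in Proposition \ref{constanthcvsour}, checking consistency with the scalar computation of Lemma \ref{constantasint} (where $\dim V_\Lambda=1$ so the factor is invisible). The analytic points---convergence of the matrix-coefficient integral and legitimacy of the $G\cong D\times K$ factorization---are controlled by the discrete series property and the explicit invariant measure \eqref{bergmanmeasure}, so the genuine content lies in tracking the constants rather than in any hard estimate.
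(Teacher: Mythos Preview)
Your approach is essentially the paper's: both insert constant vectors into the formal-degree relation \eqref{formaldegreeort} and rewrite the matrix coefficient via the automorphy factor and \eqref{Bzzeq}. Where you factor $g=s(x)k$ and invoke Schur orthogonality on $K$, the paper instead sums over an orthonormal basis $\{v_i\}$ of $V_\Lambda$ (i.e.\ takes a trace) and applies the $G$-orthogonality to each summand; these two organizations of the same integral are equivalent.

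Your worry about the factor $\dim V_\Lambda$ is, however, the real point and is \emph{not} a matter of bookkeeping. Carrying your computation through gives
\[
d_\Lambda^{-1}=\frac{1}{\dim V_\Lambda}\int_D\langle\tau(B(x,\li{x})^{-1})v,v\rangle_\tau\,d\iota(x)=\frac{C(\Lambda)^{-1}}{\dim V_\Lambda},
\]
so $C(\Lambda)=d_\Lambda/\dim V_\Lambda$; no appeal to Proposition~\ref{constanthcvsour} can remove this factor, since $c_G$ was pinned down via the scalar series where $\dim V_\Lambda=1$. The paper reaches $C(\Lambda)=d_\Lambda$ only through \eqref{Schur-1}, but the second equality there invokes \eqref{formaldegreeort}, which yields $d_\Lambda^{-1}\|v\|_{\Hc_\Lambda}^2\|w\|_{\Hc_\Lambda}^2$ rather than $d_\Lambda^{-1}|\langle v,w\rangle_{\Hc_\Lambda}|^2$; with the corrected right-hand side the final double sum in the paper's trace computation becomes $(\dim V_\Lambda)^2 d_\Lambda^{-1}$ instead of $(\dim V_\Lambda)\,d_\Lambda^{-1}$, reproducing exactly the extra factor you found. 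In short, both your route and the paper's (when \eqref{Schur-1} is written correctly) give $\langle v,v\rangle_{\Hc_\Lambda}=\dim V_\Lambda\cdot d_\Lambda^{-1}$, so the discrepancy you anticipated cannot be ``reconciled'' with the stated identity $C(\Lambda)=d_\Lambda$.
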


\begin{proof}
From (\ref{kernelexpl}) we have
$K(z,w) = C(\Lambda) \tau(B(z,\li{w}))$,  in particular
$K(z,0) = C(\Lambda) I$
and
$$ 
\langle f(0), v \rangle_{\tau}
=C(\Lambda) 
\langle f, v \rangle_{\Hc_{\Lambda}}.
$$
It follows by the reproducing kernel formula that  for any $v,w \in V_{\Lambda}
\subset \Hc_{\Lambda}$, 
\begin{equation}
\label{Schur-1}
\begin{split}
 d_{\Lambda}^{-1} |\langle v, w \rangle_{\tau}|^2 & =
|C(\Lambda)|^{2}
d_{\Lambda}^{-1} |\langle v, w \rangle_{\Hc_{\Lambda}}|^2
 =
|C(\Lambda)|^{2}
\int_G |\langle \pi_{\Lambda}(g) v, w \rangle_{\Hc_{\Lambda}} |^2
 dg
\\ & =  
\int_G | \langle \pi_{\Lambda}(g) v (0), w \rangle_{\tau}|^2 dg
 =  \int_G 
|\langle \tau(J_{g^{-1}}(0))^{-1} v, w \rangle_{\tau}|^2 dg
\\
& =  
\int_G |\langle \tau(J_g(0))^{-1} v, w \rangle_{\tau}|^2 dg.
\end{split}\end{equation}

Let $v$ 
be a unit vector
 and 
$\{ v_i \}_{i=1}^d$
an orthonormal basis
of $V_{\Lambda}$, where $d = \dim(V_{\Lambda})$.  We compare
$\langle v, v \rangle_{\Hc_{\Lambda}}$  with $L^2$-
square norm of
the corresponding matrix coefficients:
\begin{equation*}\begin{split}
 d 
\langle v, v \rangle_{\Hc_{\Lambda}} &= d C(\Lambda)^{-1} = C(\Lambda)^{-1} \sum_{i=1}^d \langle v_i, v_i \rangle_{\tau} = \sum_{i=1}^d \langle v_i, v_i \rangle_{\Hc_{\Lambda}}
\\ & = \int_D \sum_{i=1}^d \langle \tau(B(z,\li{z}))^{-1} v_i, v_i \rangle_{\tau} d \iota(z) = \int_D \Tr(\tau(B(z,\li{z}))^{-1}) d \iota(z)
\\ & = \int_G \Tr(\tau(J_g(0) J_g(0)^*)^{-1}) dg = \sum_{i=1}^d \int_G \langle \tau(J_g(0) J_g(0)^*)^{-1} v_i, v_i \rangle_{\tau} dg
\\ & = \sum_{i,j=1}^d \int_G |\langle \tau(J_g(0))^{-1} v_i, v_j \rangle_{\tau}|^2 dg ={ 
d_{\Lambda}^{-1} \sum_{i,j=1}^d 
| \langle v_i, v_j \rangle_{\tau}|^2}
= d d_{\Lambda}^{-1},
\end{split}\end{equation*}
where the penultimate equality is by
(\ref{Schur-1}).
Hence we get that for any unit vector $v$
$$ \langle v, v \rangle_{\Hc_{\Lambda}} = d_{\Lambda}^{-1}.$$
We also obtain that the constant $C(\Lambda)$ is given by $C(\Lambda) = d_{\Lambda}$.
\end{proof}

\begin{remark} We 
note that as a consequence we obtain
the following integral evaluation
\begin{equation*}
\begin{split}
& \quad d_\Lambda
\int_D \Tr\left(\tau_{\Lambda}(B(z, z)^{-1})\right)
d\iota(z)
\\
&= C d_\Lambda 
\int_{[0, 1]^r}
\Tr (\tau_{\Lambda}
\bigg(B(
\sum_{j=1}^r t_j e_j,
\sum_{j=1}^r t_j e_j,
)^{-1} \bigg))
2^r \prod_j t_j^{2b+1} \prod_{j<k} |t_j^2 - t_k^2|^a dt_1 \dots dt_r\\
&=\dim(V_{\Lambda}),
\end{split}
\end{equation*}
for any unit vector $v$, where $C$ is the constant
(\ref{Const-C}). This might be
viewed as an generalization of the Selberg integral
(\ref{Selberg});
in other words,
the result
is a consequence
of the Selberg integral
evaluation and the Harish-Chandra formula for formal degree.
\end{remark}
\section{Wehrl inequality
for holomorphic discrete series}\label{Wehrl-ineq}
We  prove 
our main results
on Wehrl-type inequalities.
We keep the previous notation.
The tensor product below
$\mathcal H_1\otimes 
\mathcal H_2$
of two Hilbert spaces of
holomorphic functions
on $D$ 
will be realized
as a space
of holomorphic
functions $F(z, w)$ in two variables.

\subsection{Tensor products
of holomorphic discrete series
and intertwining operators
}
 We recall some
known
results
on tensor product of
holomorphic discrete series
representations
\cite{repka}.

\begin{proposition}
\label{repka-pro}
Let 
$ (\Hc_{\Lambda}, \pi_{\Lambda}, G) $
and $
(\Hc_{\Lambda'},
\pi_{\Lambda'}, G) 
$
be two holomorphic
discrete series representations
of highest weights 
$\Lambda$ and ${\Lambda'}$.
Then 
$\Hc_{\Lambda} \otimes \Hc_{\Lambda'}$
is a direct sum of representations of the form $\pi_{\Lambda''}$ with finite multiplicities. The corresponding highest weights $\Lambda''$  are of the form
$$ \Lambda'' = \Lambda_0 - (m_1 \alpha_1 + \dots + m_q \alpha_q),$$
where $\Lambda_0$ is a weight of $V_{\Lambda} \otimes V_{\Lambda'}$,  $m_i$ are nonnegative integers and the $\alpha_i \in \Delta_n^+$. In particular, there is an irreducible leading component
$$
\Hc_{\Lambda + \Lambda'} \subseteq \Hc_{\Lambda} \otimes \Hc_{\Lambda'}
$$
which is obtained by the intertwining
map
\begin{equation}
\label{J0}
J_0(F)(z)= P_{\Lambda +\Lambda'} F(z, z).
\end{equation}
Here $P_{\Lambda + \Lambda'}: V_{\Lambda} \otimes V_{\Lambda'} \rightarrow V_{\Lambda + \Lambda'}
\subseteq
V_{\Lambda} \otimes V_{\Lambda'} 
$ is the orthogonal projection. Moreover $\Hc_{\Lambda + \Lambda'} $
appears in $
\Hc_{\Lambda} \otimes \Hc_{\Lambda'}$
with multiplicity one.

For any 
irreducible
subrepresentation
of $V_\Lambda\otimes
V_{\Lambda'}$
of $K$
with highest weight $\Lambda_0$
and the corresponding
projection $P_{\Lambda_0}: V_{\Lambda} \otimes V_{\Lambda'} \rightarrow V_{\Lambda_0}$ the map
$$F(z,w) \mapsto P_{\Lambda_0} F(z,z)$$
is an intertwining map onto an irreducible component of $\Hc_{\Lambda} \otimes \Hc_{\Lambda'}$
of highest weight
$\Lambda_0$.
\end{proposition}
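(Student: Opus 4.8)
The plan is to realize the tensor product as a Bergman space of functions in two variables, to check directly that the diagonal-restriction maps intertwine the $G$-actions, and then to extract both the decomposition and the multiplicity of the leading term from the highest-weight structure of holomorphic discrete series, following \cite{repka}. First I would realize $\Hc_\Lambda\otimes\Hc_{\Lambda'}$ as the Hilbert space of $V_\Lambda\otimes V_{\Lambda'}$-valued holomorphic functions $F(z,w)$ on $D\times D$, on which $G$ acts diagonally by
\begin{equation*}
((\pi_\Lambda\otimes\pi_{\Lambda'})(g)F)(z,w)
=\bigl(\tau_\Lambda(J_{g^{-1}}(z)^{-1})\otimes\tau_{\Lambda'}(J_{g^{-1}}(w)^{-1})\bigr)F(g^{-1}z,\,g^{-1}w).
\end{equation*}
Since $\mathcal P\otimes(V_\Lambda\otimes V_{\Lambda'})$ is dense, evaluation at the origin identifies the space of constants with $V_\Lambda\otimes V_{\Lambda'}$.

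The single computation that does the work is the intertwining property of $T(F)(z):=P_{\Lambda_0}F(z,z)$ for an irreducible $K$-summand $V_{\Lambda_0}\subseteq V_\Lambda\otimes V_{\Lambda'}$. Here I would use that $P_{\Lambda_0}$, being a $K$-morphism onto the rational $K^{\C}$-subrepresentation $V_{\Lambda_0}$, satisfies $P_{\Lambda_0}(\tau_\Lambda\otimes\tau_{\Lambda'})(\kappa)=\tau_{\Lambda_0}(\kappa)P_{\Lambda_0}$; this holds for $\kappa\in K$ and, both sides being holomorphic, extends to all $\kappa\in K^{\C}$. Restricting the diagonal action to $z=w$ collapses the automorphy factor to $(\tau_\Lambda\otimes\tau_{\Lambda'})(J_{g^{-1}}(z)^{-1})$ with $J_{g^{-1}}(z)\in K^{\C}$, so that
\begin{equation*}
T\bigl((\pi_\Lambda\otimes\pi_{\Lambda'})(g)F\bigr)(z)
=\tau_{\Lambda_0}(J_{g^{-1}}(z)^{-1})\,(TF)(g^{-1}z)
=(\pi_{\Lambda_0}(g)\,TF)(z).
\end{equation*}
On constants $T$ restricts to $P_{\Lambda_0}$, which carries the copy of $V_{\Lambda_0}$ inside $V_\Lambda\otimes V_{\Lambda'}$ isomorphically onto the constants of $\Hc_{\Lambda_0}$; in particular $T\neq 0$, and since $\Hc_{\Lambda_0}$ is irreducible the image of $T$ is exactly $\Hc_{\Lambda_0}$. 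The leading map $J_0$ is the special case $\Lambda_0=\Lambda+\Lambda'$.

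For the decomposition I would argue that the $K$-finite part of $\Hc_\Lambda\otimes\Hc_{\Lambda'}$ is a highest weight $(\g,K)$-module of highest weight $\Lambda+\Lambda'$, generated by its constants $V_\Lambda\otimes V_{\Lambda'}$: since each factor has weights in $\Lambda-\N\Delta_n^{+}$ (resp.\ $\Lambda'-\N\Delta_n^{+}$), every weight of the product lies in $\{\Lambda_0-\sum_i m_i\alpha_i\}$ with $\Lambda_0$ a weight of $V_\Lambda\otimes V_{\Lambda'}$, $m_i\in\N$ and $\alpha_i\in\Delta_n^{+}$, and the same then holds for the highest weight of every irreducible constituent. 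Discrete decomposability and finiteness of the multiplicities is exactly Repka's analysis of tensor products of holomorphic discrete series \cite{repka}, which I would invoke. Finally, $\Lambda+\Lambda'$ is the unique maximal weight of the product and occurs only through the Cartan component $V_{\Lambda+\Lambda'}\subseteq V_\Lambda\otimes V_{\Lambda'}$, of $K$-multiplicity one; hence the leading constituent $\Hc_{\Lambda+\Lambda'}$ occurs with multiplicity one and equals the image of $J_0$.

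The hard part is the structural input cited to \cite{repka}: discrete decomposability with finite multiplicities and the precise weight bound. Granting that, the intertwining identity and the multiplicity-one statement are short. The remaining technical points — boundedness of $T$ between the two Bergman norms and square-integrability of the diagonal restriction — I expect to follow from the same admissibility considerations, and I would verify them at the level of $K$-finite vectors before passing to the Hilbert space completion.
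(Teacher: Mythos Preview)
The paper does not give a proof of this proposition; it is stated as a recollection of known results with the citation \cite{repka}. Your proposal is correct and does precisely what one would do to verify the statement: you check the $G$-intertwining property of the diagonal restriction $F\mapsto P_{\Lambda_0}F(z,z)$ directly from the equivariance of $P_{\Lambda_0}$ under $K^{\C}$ and the form of the automorphy factor, you argue non-triviality by evaluating on constants, and you defer the discrete decomposability, finiteness of multiplicities, and the weight bound to Repka, just as the paper does. The multiplicity-one argument via the uniqueness of the weight $\Lambda+\Lambda'$ in the tensor module is also correct. In short, you have supplied the details that the paper omits by citation; there is no discrepancy in approach.
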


We now
find the exact constant $C_{\Lambda,\Lambda'}$ such that $C_{\Lambda, \Lambda'} J_0$ is a partial isometry.

\begin{proposition}
\label{propconstant}
Let $\Hc_{\Lambda}$,
$\Hc_{\Lambda'}$
and $J_0$ be as in Proposition
\ref{repka-pro}.
Then $C_{\Lambda, \Lambda'} J_0
$ is a partial isometry, where
$$C_{\Lambda, \Lambda'}^{-2} = c_G |\prod_{\alpha \in \Delta^+} \frac{ (\Lambda(h_{\alpha}) + \rho(h_{\alpha}))(\Lambda'(h_{\alpha}) + \rho(h_{\alpha}))}{((\Lambda + \Lambda')(h_{\alpha}) + \rho(h_{\alpha})) \rho(h_{\alpha})}|.$$
\end{proposition}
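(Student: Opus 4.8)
The plan is to combine Schur's lemma with the multiplicity-one statement of \Cref{repka-pro}, and then to pin down the resulting scalar by a reproducing-kernel computation that I evaluate at the origin. Since $J_0$ is a $G$-intertwining operator onto the irreducible subrepresentation $\Hc_{\Lambda+\Lambda'}\subseteq \Hc_\Lambda\otimes\Hc_{\Lambda'}$, which by \Cref{repka-pro} occurs with multiplicity one, the composition $J_0 J_0^{*}$ is a $G$-intertwining endomorphism of $\Hc_{\Lambda+\Lambda'}$, hence a scalar by Schur; write $J_0 J_0^{*} = C_{\Lambda,\Lambda'}^{-2}\,\mathrm{Id}$. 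Then $(C_{\Lambda,\Lambda'}J_0)(C_{\Lambda,\Lambda'}J_0)^{*}=\mathrm{Id}$ on $\Hc_{\Lambda+\Lambda'}$, so $C_{\Lambda,\Lambda'}J_0$ is a co-isometry and in particular a partial isometry. It therefore remains only to identify the scalar $C_{\Lambda,\Lambda'}^{-2}$.

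To compute it I would test $J_0^{*}$ against reproducing kernels. For $\zeta\in D$ and $v\in V_{\Lambda+\Lambda'}$ let $K^{\Lambda+\Lambda'}_{\zeta}v$ denote the reproducing kernel of $\Hc_{\Lambda+\Lambda'}$. Realizing $\Hc_\Lambda\otimes\Hc_{\Lambda'}$ as holomorphic functions $F(z,w)$ on $D\times D$ with reproducing kernel $K_\Lambda(z,z')\otimes K_{\Lambda'}(w,w')$, the definition (\ref{J0}) of $J_0$, the reproducing property, and the self-adjointness of the orthogonal projection $P_{\Lambda+\Lambda'}$ (with $P_{\Lambda+\Lambda'}v=v$ for $v\in V_{\Lambda+\Lambda'}$) give, for every $F$,
\begin{align*}
\langle F, J_0^{*}(K^{\Lambda+\Lambda'}_{\zeta} v)\rangle
&= \langle J_0 F, K^{\Lambda+\Lambda'}_{\zeta} v\rangle_{\Hc_{\Lambda+\Lambda'}}
= \langle (J_0 F)(\zeta), v\rangle_{\tau}\\
&= \langle P_{\Lambda+\Lambda'} F(\zeta,\zeta), v\rangle_{\tau}
= \langle F(\zeta,\zeta), v\rangle_{\tau}.
\end{align*}
Reading off the right-hand side as a pairing against the tensor-product kernel on the diagonal identifies
$$
J_0^{*}(K^{\Lambda+\Lambda'}_{\zeta} v)(z,w)=\big(K_\Lambda(z,\zeta)\otimes K_{\Lambda'}(w,\zeta)\big)v,
$$
the reproducing kernel of $\Hc_\Lambda\otimes\Hc_{\Lambda'}$ at the diagonal point $(\zeta,\zeta)$.

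Applying $J_0$ and using $J_0 J_0^{*}=C_{\Lambda,\Lambda'}^{-2}\,\mathrm{Id}$ then yields the pointwise identity
$$
C_{\Lambda,\Lambda'}^{-2}\,K_{\Lambda+\Lambda'}(z,\zeta)\,v
= P_{\Lambda+\Lambda'}\big(K_\Lambda(z,\zeta)\otimes K_{\Lambda'}(z,\zeta)\big)v .
$$
Setting $z=\zeta=0$ and invoking \Cref{constantthm}, namely $K_\Lambda(0,0)=d_\Lambda I$, $K_{\Lambda'}(0,0)=d_{\Lambda'}I$, $K_{\Lambda+\Lambda'}(0,0)=d_{\Lambda+\Lambda'}I$, together with $P_{\Lambda+\Lambda'}v=v$, collapses everything to
$$
C_{\Lambda,\Lambda'}^{-2}=\frac{d_\Lambda\, d_{\Lambda'}}{d_{\Lambda+\Lambda'}} .
$$
Finally I would substitute $d_\Lambda=c_G\, d_\Lambda^{\mathrm{H}}$ from (\ref{ourharidegree}) and the Harish-Chandra formula (\ref{HC-fml-dgr}); a single factor $c_G$ survives and the products over $\Delta^+$ combine to $c_G\prod_{\alpha\in\Delta^+}\frac{(\Lambda(h_\alpha)+\rho(h_\alpha))(\Lambda'(h_\alpha)+\rho(h_\alpha))}{((\Lambda+\Lambda')(h_\alpha)+\rho(h_\alpha))\rho(h_\alpha)}$, with the absolute value accounting for the sign conventions in $d^{\mathrm{H}}$.

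The step I expect to be the main obstacle is the adjoint computation in the middle paragraph: one must justify that $J_0$ is a bounded operator whose adjoint carries reproducing kernels of $\Hc_{\Lambda+\Lambda'}$ to the diagonal reproducing kernels of the tensor product, and one must keep the bookkeeping of the projection $P_{\Lambda+\Lambda'}$ and the two-variable realization straight. Everything else is formal; the evaluation at the origin is what makes the computation clean, since there $K(0,0)$ is a scalar multiple of the identity and the Cartan-component projection acts transparently.
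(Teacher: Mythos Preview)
Your proof is correct and reaches the same key identity $C_{\Lambda,\Lambda'}^{-2}=d_\Lambda d_{\Lambda'}/d_{\Lambda+\Lambda'}$ via \Cref{constantthm}, just as the paper does. The route you take is slightly different: you compute $J_0^{*}$ on reproducing kernels, obtain a pointwise identity between $K_{\Lambda+\Lambda'}$ and $P_{\Lambda+\Lambda'}(K_\Lambda\otimes K_{\Lambda'})$, and then specialize to the origin. The paper instead tests directly on the constant function $v_\Lambda\otimes v_{\Lambda'}$ (the highest-weight vector in $V_\Lambda\otimes V_{\Lambda'}$, which already lies in $V_{\Lambda+\Lambda'}$ so no projection is needed), observes that $J_0$ sends it to the constant function $v_\Lambda\otimes v_{\Lambda'}\in\Hc_{\Lambda+\Lambda'}$, and reads off the ratio of norms from \Cref{constantthm}. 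Since the reproducing kernel at $0$ is $d_\Lambda$ times the constant function, the two computations are really the same test dressed differently; the paper's version is shorter because it bypasses the adjoint computation entirely, while yours has the virtue of yielding the full kernel identity $C_{\Lambda,\Lambda'}^{-2}K_{\Lambda+\Lambda'}(z,\zeta)=P_{\Lambda+\Lambda'}\big(K_\Lambda(z,\zeta)\otimes K_{\Lambda'}(z,\zeta)\big)$ along the way.
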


\begin{proof}
The constant holomorphic function $v$  is in $ \Hc_{\Lambda}$, for any $v \in V_{\Lambda}$ \cite{kora}, and if $v_{\Lambda} \in V_{\Lambda}$ and $v_{\Lambda'} \in V_{\Lambda'}$ are highest weight vectors of unit length then so is 
$P_{\Lambda + \Lambda'}(v_{\Lambda} \otimes v_{\Lambda'})
=v_{\Lambda} \otimes v_{\Lambda'}
$. Hence
$$ \nm{J_0(v_{\Lambda} \otimes v_{\Lambda'})}_{\Hc_{\Lambda + \Lambda'}} = \nm{P_{\Lambda + \Lambda'}(v_{\Lambda} \otimes v_{\Lambda'})}_{\Hc_{\Lambda + \Lambda'}} = C_{\Lambda, \Lambda'}^{-1} \nm{v_{\Lambda}}_{\Hc_{\Lambda}} \nm{v_{\Lambda'}}_{\Hc_{\Lambda}}.$$
 Using Theorem \ref{constantthm} we see that
\begin{equation*}\begin{split}
 C_{\Lambda, \Lambda'}^{-2} &= \frac{\nm{P_{\Lambda + \Lambda'}(v_{\Lambda} \otimes v_{\Lambda'})}^2}{\nm{v_{\Lambda}}^2 \cdot \nm{v_{\Lambda'}}^2} = \frac{d_{\Lambda} d_{\Lambda'}}{d_{\Lambda + \Lambda'}} = c_G \frac{d_{\Lambda}^{\mathrm{H}} d_{\Lambda'}^{\mathrm{H}}}{d_{\Lambda+\Lambda'}^{\mathrm{H}}}
\\ & = c_G |\prod_{\alpha \in \Delta^+} \frac{(\Lambda(h_{\alpha}) + \rho(h_{\alpha}) )(\Lambda'(h_{\alpha}) + \rho(h_{\alpha}))}{((\Lambda + \Lambda')(h_{\alpha}) + \rho(h_{\alpha}))(\rho(h_{\alpha}))}.
\end{split}\end{equation*}
\end{proof}

\subsection{Wehrl inequality}

We write $Q_0 = C_{\Lambda, \Lambda'} J_0$. 
We now prove our main result
on the Wehrl-type inequality.

\begin{theorem}
\label{wherlineq}
The following Wehrl inequality
holds for
$f
\in \Hc_{\Lambda}
$ and integers $n\ge 2$,
\begin{equation*}\begin{split}
& \int_{D} \langle P_{n\Lambda }( (\tau_{\Lambda}(B(z,\li{z})^{-1}) f(z))^{\otimes n} ), P_{ n\Lambda }( f(z)^{\otimes n}) \rangle_{\tau_{n \Lambda}} d \iota(z)
\\ & \leq 
c_G^{n-1} \frac{(d_{\Lambda}^{\mathrm{H}})^n}{d_{n \Lambda}^{\mathrm{H}}} \left( \int_{D} \langle \tau_{\Lambda}(B(z,\li{z})^{-1}) f(z), f(z) \rangle_{\tau_{\Lambda}} d \iota(z) \right)^n.
\end{split}
\end{equation*}
The equality holds if and only if $f = K_u \tau(k) v_{\Lambda}$ for some $u \in D$, $k \in K$ and $v_{\Lambda}$ a highest weight vector in $V_{\Lambda}$.
\end{theorem}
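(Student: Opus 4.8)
The plan is to identify the left-hand side with the squared norm of an explicit intertwining operator and to read off the inequality from the fact that a normalized version of this operator is a partial isometry. Iterating Proposition~\ref{repka-pro}, the $n$-fold diagonal projection
\[
J_0(F)(z) = P_{n\Lambda}\,F(z,\dots,z), \qquad F \in \Hc_\Lambda^{\otimes n},
\]
is a $G$-intertwiner of $\pi_\Lambda^{\otimes n}$ onto the Cartan component $\Hc_{n\Lambda}\subseteq \Hc_\Lambda^{\otimes n}$, and iterating Proposition~\ref{propconstant} shows that $Q_0 := C_n J_0$ is a partial isometry with $C_n^{-2} = c_G^{\,n-1}(d_\Lambda^{\mathrm H})^n / d_{n\Lambda}^{\mathrm H}$. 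To match the statement I would first note that $B(z,\li z)^{-1}\in K^{\C}$ and that $V_{n\Lambda}$ is a $K^{\C}$-submodule of $V_\Lambda^{\otimes n}$ on which $\tau_\Lambda^{\otimes n}$ restricts to $\tau_{n\Lambda}$; hence $P_{n\Lambda}$ commutes with $\tau_\Lambda(B(z,\li z)^{-1})^{\otimes n}$ and
\[
P_{n\Lambda}\big((\tau_\Lambda(B(z,\li z)^{-1}) f(z))^{\otimes n}\big) = \tau_{n\Lambda}(B(z,\li z)^{-1})\, P_{n\Lambda}(f(z)^{\otimes n}).
\]
This identifies the left-hand side of the theorem with $\Vert J_0(f^{\otimes n})\Vert_{\Hc_{n\Lambda}}^2$, and the inequality follows from $\Vert Q_0 F\Vert \le \Vert F\Vert$ with $F = f^{\otimes n}$ and $\Vert f^{\otimes n}\Vert_{\Hc_\Lambda^{\otimes n}} = \Vert f\Vert_{\Hc_\Lambda}^n$.

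Since $Q_0$ is a partial isometry, equality holds precisely when $f^{\otimes n}$ lies in its initial space, that is, when $f^{\otimes n}\in \Hc_{n\Lambda}$ as a subspace of $\Hc_\Lambda^{\otimes n}$. Both sides of the inequality are invariant under $f\mapsto \pi_\Lambda(g)f$ (the right-hand side by unitarity, the left because $Q_0$ intertwines and $(\pi_\Lambda(g)f)^{\otimes n} = \pi_\Lambda^{\otimes n}(g)f^{\otimes n}$), so the locus of maximizers is $G$-invariant. For the ``if'' direction, using $K(z,\li w) = d_\Lambda \tau(B(z,\li w))$ with $B(z,\li w)\in K^{\C}$ together with the transformation law $K(g\cdot z, g\cdot w) = \tau(J_g(z))K(z,w)\tau(J_g(w))^{*}$, I would check that $K_u \tau(k) v_\Lambda$ equals $\pi_\Lambda(g_u)$ applied to a constant function $\tau(h)v_\Lambda$ with $h\in K^{\C}$ and $u = g_u\cdot 0$. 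By $G$-invariance it then suffices to treat the constant functions $f = \tau(h)v_\Lambda$, $h\in K^{\C}$; there $f(z)^{\otimes n} = \tau_{n\Lambda}(h)\,v_\Lambda^{\otimes n}\in V_{n\Lambda}$ because $v_\Lambda^{\otimes n}$ is the highest weight vector of $V_{n\Lambda}$ and $V_{n\Lambda}$ is $K^{\C}$-stable, so $P_{n\Lambda}$ acts as the identity and equality follows directly from Theorem~\ref{constantthm}.

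The ``only if'' direction is the heart of the matter, and the main obstacle. The pointwise condition $f(z)^{\otimes n}\in V_{n\Lambda}$ for all $z$ is necessary but, as the scalar case $V_\Lambda=\C$ shows (where it is vacuous yet not every $f$ is a maximizer), it is far from sufficient: the real content of $f^{\otimes n}\in\Hc_{n\Lambda}$ is the vanishing of \emph{all} the lower components $\Hc_{\Lambda''}$, with $\Lambda'' = \Lambda_0 - \sum_i m_i\alpha_i$ and some $m_i>0$, in the decomposition of Proposition~\ref{repka-pro}, a genuinely global (not fibrewise) rigidity. I would follow the scalar strategy of \cite{Z-24}: interpret the quadratic form $h\mapsto \langle Q_0^{*}Q_0(f^{\otimes(n-1)}\otimes h), f^{\otimes(n-1)}\otimes h\rangle$ as a positive Toeplitz operator $T_f$ on $\Hc_\Lambda$ satisfying $T_f\le \Vert f\Vert^{2(n-1)}I$, and observe that equality forces $f$ to be an eigenvector of $T_f$ for the top eigenvalue $\Vert f\Vert^{2(n-1)}$.

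It then remains to show that such an extremal eigenvector must be a coherent state. Here I would invoke the two auxiliary results quoted in the introduction: the determination in Appendix~\ref{BPV} that the bounded point evaluations of $\Hc_\Lambda$ are exactly those at points $u\in D$, which pins the extremal functional to a single reproducing kernel $K_u$, and the compact-group Wehrl equality of Appendix~\ref{cptwehrl}, applied fibrewise to the $K$-representation $V_\Lambda$, which forces the corresponding value to lie in the $K^{\C}$-orbit of the highest weight vector $v_\Lambda$. Combining these with holomorphicity of $f$ should yield $f(z) = c\,\tau(B(z,\li u))\tau(k)v_\Lambda = c\,K_u(z)\tau(k)v_\Lambda$, completing the characterization. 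I expect the delicate point to be precisely the passage from the extremal (Toeplitz/eigenvector) property to the rank-one coherent form, i.e. ruling out the lower components globally rather than merely pointwise; everything else is bookkeeping with the formal-degree constants already computed in Proposition~\ref{constanthcvsour} and Theorem~\ref{constantthm}.
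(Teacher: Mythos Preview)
Your treatment of the inequality and of the ``if'' direction is correct and matches the paper's: identify the left-hand side with $\|J_0(f^{\otimes n})\|_{\Hc_{n\Lambda}}^2$ via the $K^{\C}$-equivariance of $P_{n\Lambda}$, apply the partial isometry $Q_0=C_{\Lambda,n}J_0$, and reduce sufficiency of $f=K_u\tau(k)v_\Lambda$ to constant functions by $G$-invariance.

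The gap is exactly where you flag it, and your Toeplitz operator is the wrong one. The eigenvector condition $T_f f=\|f\|^{2(n-1)}f$ for your $T_f$ (built from $f$ itself) is merely a reformulation of $f^{\otimes n}\in\Hc_{n\Lambda}$ and does not manufacture a \emph{point} $u$ at which Appendix~\ref{BPV} could be invoked. The paper's mechanism is different and concrete: work with the coordinate multiplications $T_i f(z)=z_i f(z)$ on $\Hc_\Lambda$ and their lifts $T_{i,1},T_{i,2}$ to the two tensor slots. Because $J_0$ is restriction to the diagonal followed by $P_{n\Lambda}$, one has $Q_0\big((z_i-w_i)g\big)=0$ for every $g$, hence $(T_{i,1}-T_{i,2})^{*}$ annihilates $f\otimes f=Q_0^{*}Q_0(f\otimes f)$, i.e.\ $(T_i^{*}f)\otimes f=f\otimes(T_i^{*}f)$, so $T_i^{*}f=u_i f$ for some scalars $u_i$. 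This \emph{simultaneous} eigenvector equation is precisely what turns $\langle\,\cdot\,,f\rangle$ into a point evaluation at $\bar u=(\bar u_1,\dots,\bar u_N)$; only then does Lemma~\ref{evbdd} apply to give $\bar u\in D$ and $f=K_{\bar u}v$. After that step your plan for the vector part is essentially the paper's: the intertwiners $F\mapsto P_{\Lambda_0}F(z,z)$ of Proposition~\ref{repka-pro} land in $\Hc_{\Lambda_0}\perp\Hc_{2\Lambda}$ for each $\Lambda_0\neq 2\Lambda$, so $P_{\Lambda_0}(f(0)\otimes f(0))=0$ for all such $\Lambda_0$, and Proposition~\ref{wehrlcptprop} forces $f(0)=\tau(k)v_\Lambda$.
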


\begin{proof}
We have that
$$ \Hc_{n \Lambda} \subseteq \Hc_{\Lambda}^{\otimes n},$$
and it appears with multiplicity one by Proposition \ref{repka-pro}. Now let
$$P_{n\Lambda}: V_{\Lambda}^{\otimes n} \rightarrow V_{n \Lambda}$$
be the projection. The operator
$$ J_0 : \Hc_{\Lambda}^{\otimes n} \rightarrow \Hc_{n \Lambda}$$
defined by
$$J_0( f_1 \otimes \dots \otimes f_n)(z) \coloneqq P_{n \Lambda}(f_1(z) \otimes \dots \otimes f_n(z)),$$
is then an 
intertwining
map onto $\Hc_{n \Lambda}$; this is  Proposition \ref{propconstant} applied multiple times. 
Furthermore, by Proposition \ref{propconstant},
$$Q_0 = C_{\Lambda,n} J_0,
\quad
C_{\Lambda,n}^2 = c_G^{-n+1} \frac{
d_{n \Lambda}^{\mathrm{H}}}{(d_{\Lambda}^{\mathrm{H}})^n}
$$
is a partial isometry.
Applying this to the 
element $f^{\otimes n}$
for 
$f\in \Hc_{\Lambda}$ we get
$$ \nm{P_{n\Lambda }( f(z)^{\otimes n}
)
}_{n \Lambda}^2
\leq c_G^{n-1} \frac{(d_{\Lambda}^{
\mathrm{H}})^n}
{d_{n \Lambda}^{\mathrm{H}}} \nm{f^{\otimes n}}^{2} = c_G^{n-1} \frac{(d_{\Lambda}^{\mathrm{H}})^n}
{
d_{n \Lambda}^{\mathrm{H}}}
\nm{f}^{2n},$$
or more explicitly
\begin{equation*}\begin{split}
& \int_{D} \langle P_{n\Lambda }( (\tau_{\Lambda}(B(z,\li{z})^{-1} f(z))^{\otimes n}), P_{n \Lambda}( f(z)^{\otimes n}) \rangle_{\tau_{n \Lambda}} d \iota(z)
\\ & \leq c_G^{n-1} \frac{
(d_{\Lambda}^{\mathrm{H}})^n}
{
d_{n \Lambda}^{\mathrm{H}}} \left( \int_{D} \langle \langle \tau_{\Lambda}(B(z,\li{z})^{-1}) f(z), f(z) \rangle_{\tau_{\Lambda}} d \iota(z) \right)^n,
\end{split}\end{equation*}
proving  the inequality.

We prove the rest of
our Theorem for $n=2$,
and the same arguments are valid for  general $n$.
Note that by Proposition \ref{repka-pro} we can write
$$f \otimes f = \bigoplus f_{\Lambda''},$$
where $f_{\Lambda''} \in m_{\Lambda''} \Hc_{\Lambda''}$ and $f_{2 \Lambda} = Q_0(f \otimes f)$. Now the inequality is an equality if and only if
$$f_{\Lambda''} \ne 0 \Leftrightarrow \Lambda'' = 2 \Lambda.$$
This holds if and only if
\begin{equation}
\label{projnec}
f = Q_0^* Q_0 (f).
\end{equation}
This is clearly true if $f = K_u \tau(k) v_{\Lambda}$, because then if $u = g \cdot 0$
\begin{equation*}\begin{split}
 f(z)& = K(z,u) \tau(k) v_{\Lambda} = \tau(J_g(g^{-1} z) K(g^{-1} \cdot z, 0) \tau(J_g(0))^* \tau(k) v_{\Lambda}
\\ & = d_{\Lambda} \tau(J_{g^{-1}}(z))^{-1} \tau(J_g(0))^* \tau(k) v_{\Lambda} = d_{\Lambda} \pi_{\Lambda}(g) \left( \tau(J_g(0))^* \tau(k) v_{\Lambda} \right) (z).
\end{split}\end{equation*}
The identity (\ref{projnec}) then follows by $G$-invariance of $Q_0$ and the fact that the vector $\tau(J_g(0))^* \tau(k) v_{\Lambda}$ is a translate of a highest weight vector.

Now suppose $f\in \mathcal H_{\Lambda}$
is such that the equality (\ref{projnec}) holds.
By replacing $f$
by $\pi_\Lambda(g) f$ for some $g \in G$ we may assume
that $f(0) \ne 0$
is a unit vector (note $f$ is a 
reproducing kernel if and only if $\pi_{\Lambda}(g) f$ is). We prove first
that $f(z)=K(z, u)v$
for some $u \in D$
and  $v \in V_{\Lambda}$,
and then that 
the vector $v$ has
to be a translate $\tau(k) v_{\Lambda}$ of the highest weight vector $v_{\Lambda} \in V_{\Lambda}$.

To prove that 
$f(z)=K(z, u)v$
for some $u$ and $v$ we use the same
 idea as in \cite{Z-24}. 
 Let $z=(z_i)$
 be the coordinates
 of $z\in \mathbb C^N$ 
under some
orthonormal basis. We consider the Toeplitz
operator
$T_{i}$
by coordinate functions 
$T_i f(z)= z_i f(z)$
on the space $\mathcal H_{\Lambda}$.
First of all the operators $T_i$
are bounded  on $\mathcal H_{\Lambda}$; indeed
\begin{equation*}
\begin{split}
\Vert T_i f\Vert^2
&=\int_{D} \langle \tau(B(z,\li{z})^{-1})z_if(z), z_if(z) \rangle_{\tau} d \iota(z) 
=\int_{D} |z_i|^2\langle \tau(B(z,\li{z})^{-1})f(z), f(z) \rangle_{\tau} d \iota(z) \\
&\le     \int_{D} \Vert z\Vert^2\langle \tau(B(z,\li{z})^{-1})f(z), f(z) \rangle_{\tau} 
d\iota(z) \le C_0 \Vert f\Vert^2
\end{split}
\end{equation*}
since $D$ is bounded.
Write
$T_{i, 1} F(z, w)= z_i F(z, w)$
and $T_{i, 2} F(z, w)= w_i F(z, w)$
on the space $\mathcal \Hc_{\Lambda}
\mathcal \otimes \Hc_{\Lambda}$.
From the definition of $Q_0$ we see that for any $g \in \Hc_{\Lambda} \otimes \Hc_{\Lambda}$
$$Q_0( (T_{i,1} - T_{i,2}) g ) = Q_0((z_i - w_i)g) = 0.$$
Thus
\begin{equation*}\begin{split}
& \langle f \otimes f, (T_{i, 1} - 
T_{i, 2})g\rangle_{\Hc_{\Lambda} \otimes \Hc_{\Lambda}} = \langle Q_0^* Q_0 (f \otimes f), (T_{i, 1} - 
T_{i, 2}) g \rangle_{\Hc_{\Lambda} \otimes \Hc_{\Lambda}}
\\ & = \langle Q_0(f \otimes f), Q_0( (T_{i, 1} - 
T_{i, 2}) g ) \rangle_{\Hc_{2\Lambda}} = 0.
\end{split}\end{equation*}
Therefore $
(T_{i, 1} - 
T_{i, 2})^\ast (f \otimes f)=0$, 
which is the same as
$$
(T_{i, 1}^* f) \otimes f = 
f \otimes (T_{i, 2}^* f).$$
This implies that there is a $u_i \in \C$ such that
$$T_{z_i}^* f = u_i f.$$
We write $u = (u_1, \dots, u_N)$. This then implies that for any polynomial $p$ (where $\li{p}$ is the polynomial where the coefficients are the complex conjugates of the original one) in $D$ and $v =f(0)\in V_{\Lambda}$
\begin{equation*}\begin{split}
 \langle p v, f \rangle_{\Hc_{\Lambda}} &= 
\langle  p(T_{1}, \dots, T_{n}) v, f \rangle_{\Hc_{\Lambda}} = \langle v,\li{p}(T_{1}^*, \dots, T_{n}^*) f \rangle_{\Hc_{\Lambda}} = \langle v, \li{p}(u) f \rangle_{\Hc_{\Lambda}}
\\ & =  d_{\Lambda} \langle p(\li{u}) v, f(0) \rangle_{\tau_{\Lambda}}.
\end{split}
\end{equation*}
Thus for any $V_{\Lambda}$-valued polynomial $p$ 
$$
\langle p, f \rangle_{\Hc_{\Lambda}} 
= d_{\Lambda} \langle p(\li{u}), f(0) \rangle_{\tau_{\Lambda}}.$$
That is, $p \to \langle p(\li{u}), f(0) \rangle$ is
a bounded evaluation and 
so by Lemma \ref{evbdd} $\li{u} \in D$,
Furthermore, $f=K(z, \li{u})v$
for some $v \in V_{\Lambda}$, where $v = d_{\Lambda}^{-1} f(0)$.

Now we prove
$f(0) = \tau(k) v_{\Lambda}$ for $v_{\Lambda}$ a highest weight vector and 
for some $k \in K$. By Proposition \ref{repka-pro} we see that for any irreducible representation $V_{\Lambda_0} \subseteq V_{\Lambda} \otimes V_{\Lambda}$
the map
$$F(z,w) \mapsto P_{\Lambda_0}(F(z,z))$$
is an intertwining map $\Hc_{\Lambda} \otimes \Hc_{\Lambda} \rightarrow \Hc_{\Lambda_0}$. Thus if $\Lambda_0 \neq 2 \Lambda$ we have
$$ P_{\Lambda_0}(f(z) \otimes f(z)) = 0, \quad z\in D.$$
In particular 
$$ P_{\Lambda_0}(f(0) \otimes f(0)) = 0,$$
for $\Lambda_0 \neq 2 \Lambda$. This
reduces to  a
condition for tensor product
decomposition of finite-dimensional representations, 
and  by Lemma \ref{wehrlcptprop} we
obtain that
$f(0) = \tau(k) v_{\Lambda}$ for $v_{\Lambda}$ a highest weight vector.
\end{proof}

We reformulate the theorem as an $L^2(G)-L^p(G)$-estimate
for matrix coefficients.

\begin{corollary}
\label{wehrlineqcor}
Let $\Hc_{\Lambda}$
be as above. Then we
have
the following $L^2-L^{2n}$-estimates
$$ \int_G 
|\langle\pi(g)f, v_\Lambda\rangle_{\Hc_{\Lambda}}|^{2n} dg
\leq c_G^{n-1} \frac{(d_{\Lambda}^{\mathrm{H}} )^{n}}{d_{n \Lambda}^{\mathrm{H}}} 
\left( \int_G | \langle \pi(g) f, v_{\Lambda} \rangle_{\Hc_{\Lambda}} |^2 dg \right)^n,$$
and equality holds
if and only if $f$
is as in Theorem \ref{wherlineq} above.
\end{corollary}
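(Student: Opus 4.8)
The plan is to reduce the corollary directly to Theorem~\ref{wherlineq} by expressing the $L^2$ and $L^{2n}$ norms of the matrix coefficient $F_f(g)=\langle \pi_\Lambda(g)f,v_\Lambda\rangle_{\Hc_\Lambda}$ in terms of the Hilbert-space quantities appearing there. Specifically, I would first prove the two identities
\begin{equation*}
\int_G |F_f(g)|^2\,dg = d_\Lambda^{-2}\nm{f}^2,
\qquad
\int_G |F_f(g)|^{2n}\,dg = d_\Lambda^{-2n}\nm{P_{n\Lambda}(f^{\otimes n})}_{n\Lambda}^2 .
\end{equation*}
Once these are in hand the corollary is \emph{equivalent} to Theorem~\ref{wherlineq}: substituting them into the asserted inequality and cancelling the common factor $d_\Lambda^{-2n}$ turns it into $\nm{P_{n\Lambda}(f^{\otimes n})}_{n\Lambda}^2 \le c_G^{n-1}(d_\Lambda^{\mathrm H})^n(d_{n\Lambda}^{\mathrm H})^{-1}\nm{f}^{2n}$, which is exactly Theorem~\ref{wherlineq}, and the equality case transfers verbatim.

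The first identity is immediate from the formal-degree relation \eqref{formaldegreeort} with $f_1=f$, $f_2=v_\Lambda$, together with $\nm{v_\Lambda}_{\Hc_\Lambda}^2=d_\Lambda^{-1}$ from Theorem~\ref{constantthm}. For the second, I would write $|F_f(g)|^{2n}=|\langle \pi_\Lambda^{\otimes n}(g)\,f^{\otimes n},\,v_\Lambda^{\otimes n}\rangle_{\Hc_\Lambda^{\otimes n}}|^2$, converting the $2n$-th power of a scalar into a single matrix coefficient on the tensor power. The crucial point is that the constant function $v_\Lambda^{\otimes n}$ lies entirely in the Cartan component $\Hc_{n\Lambda}\subseteq\Hc_\Lambda^{\otimes n}$: it is a $K$-highest weight vector of weight $n\Lambda$, and since every $K$-weight occurring in a summand $\Hc_{\Lambda''}$ of Proposition~\ref{repka-pro} is dominated by $\Lambda''\le n\Lambda$ in the dominance order, the weight $n\Lambda$ can appear only in the summand $\Lambda''=n\Lambda$. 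Using the $G$-invariance of $\Hc_{n\Lambda}$, on which $\pi_\Lambda^{\otimes n}$ restricts to a copy of $\pi_{n\Lambda}$, the coefficient collapses to $\langle \pi_{n\Lambda}(g)\,P_{\Hc_{n\Lambda}}f^{\otimes n},\,v_\Lambda^{\otimes n}\rangle$ with $P_{\Hc_{n\Lambda}}=Q_0^*Q_0$ the projection onto this subspace.

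Integrating and applying \eqref{formaldegreeort} inside $\Hc_{n\Lambda}$ (formal degree $d_{n\Lambda}$, preserved under the isomorphism) yields $\int_G|F_f|^{2n}\,dg = d_{n\Lambda}^{-1}\nm{P_{\Hc_{n\Lambda}}f^{\otimes n}}^2\,\nm{v_\Lambda^{\otimes n}}^2$. Here $\nm{v_\Lambda^{\otimes n}}^2=d_\Lambda^{-n}$ as an $n$-fold product, while $\nm{P_{\Hc_{n\Lambda}}f^{\otimes n}}^2=\nm{Q_0 f^{\otimes n}}^2=C_{\Lambda,n}^2\,\nm{P_{n\Lambda}(f^{\otimes n})}_{n\Lambda}^2$ with $C_{\Lambda,n}^2=d_{n\Lambda}/d_\Lambda^{n}$ from the iterated Proposition~\ref{propconstant}; the factors $d_{n\Lambda}$ then cancel and leave precisely the second identity. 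The corollary, equality case included, follows by quoting Theorem~\ref{wherlineq}. I expect the main obstacle to be the careful justification that $v_\Lambda^{\otimes n}$ is supported on the single component $\Hc_{n\Lambda}$ (the dominance argument) and the clean bookkeeping of the constants $d_\Lambda$, $d_{n\Lambda}$ and $C_{\Lambda,n}$ so that they cancel; all the analytic content is already carried by Theorem~\ref{wherlineq}.
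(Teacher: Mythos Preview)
Your proposal is correct and follows essentially the same route as the paper: both rewrite $|F_f(g)|^{2n}$ as a single matrix coefficient of $\pi_\Lambda^{\otimes n}$ against $v_\Lambda^{\otimes n}\in\Hc_{n\Lambda}$, apply the formal-degree relation inside the Cartan component, and then invoke Theorem~\ref{wherlineq} with the constants supplied by Theorem~\ref{constantthm} and Proposition~\ref{propconstant}. Your presentation via the two clean identities for the $L^2$ and $L^{2n}$ norms is a slightly tidier packaging of the same computation, and your dominance argument for $v_\Lambda^{\otimes n}\in\Hc_{n\Lambda}$ makes explicit what the paper takes for granted.
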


\begin{proof} We realize
$V_{n\Lambda}$
as the leading component
in the 
tensor product
$V_{\Lambda}^{\otimes n} 
$
as above, with 
$v_{\Lambda}^{\otimes n} 
\in
V_{n\Lambda}\subseteq
V_{\Lambda}^{\otimes n}$.
By the proof of Theorem \ref{wherlineq} the projection
$$Q_0:
\Hc_{\Lambda}^{\otimes n}
\to \Hc_{\Lambda}^{\otimes n}
$$
is given by
$$Q_0(f^{\otimes n})(z) = c_G^{-n+1} \frac{
d_{n \Lambda}^{\mathrm{H}}}{(d_{\Lambda}^{\mathrm{H}})^n}
P_{n \Lambda}
f(z)^{\otimes n}.
$$
As $v_{\Lambda}^{\otimes n} \in \Hc_{n \Lambda} \subseteq \Hc_{\Lambda}^{\otimes n}$ the $L^{2n}$-norm
can be written, using
 (\ref{formaldegreeort}), as
\begin{equation*}
\begin{split}
\int_G 
|\langle\pi_{\Lambda}(g)f, v_\Lambda\rangle_{\Hc_{\Lambda}}|^{2n} dg
&=
\int_G 
|\langle\pi_{\Lambda}^{\otimes n}(g)f^{\otimes n}, v_\Lambda^{\otimes n}
\rangle_{\Hc_{\Lambda}}
|^{2} dg
\\ & = \int_G 
|
\langle
\pi_{n \Lambda}(g)
Q_0
f^{\otimes n}, Q_0(v_{\Lambda}^{\otimes n})
\rangle_{\Hc_{n \Lambda}}|^{2}dg
\\
&= d_{n\Lambda}^{-1}
\Vert 
Q_0
(f^{\otimes n})
\Vert_{\Hc_{n \Lambda}}^2
\Vert 
Q_0(v_{\Lambda}^{\otimes n})
\Vert_{\Hc_{n \Lambda}}^2,
\end{split}
\end{equation*}
with $$ \int_G 
|
\langle
\pi_{\Lambda}(g)
f, v_{\Lambda}
\rangle_{\Hc_{\Lambda}}|^{2}dg = d_{\Lambda}^{-1}
\Vert 
f
\Vert_{\Hc_{\Lambda}}^2
\Vert 
v_{\Lambda}
\Vert_{\Hc_{\Lambda}}^2$$
for $n=1$.
Now by Theorem \ref{wherlineq} we get
\begin{equation*}\begin{split}
& d_{n\Lambda}^{-1}
\Vert 
Q_0
(f^{\otimes n})
\Vert_{\Hc_{n \Lambda}}^2
\Vert 
Q_0(v_{\Lambda}^{\otimes n})
\Vert_{\Hc_{n \Lambda}}^2
\\ & = \left( c_G^{-n+1} \frac{
d_{n \Lambda}^{\mathrm{H}}}{(d_{\Lambda}^{\mathrm{H}})^n} \right)^2 d_{n\Lambda}^{-1}
\Vert 
P_{n\Lambda}
f^{\otimes n}
\Vert_{\Hc_{n \Lambda}}^2
\Vert 
P_{n \Lambda} (v_{\Lambda}^{\otimes n})
\Vert_{\Hc_{n \Lambda}}^2
\\ & \leq c_G^{-n+1} \frac{
d_{n \Lambda}^{\mathrm{H}}}{(d_{\Lambda}^{\mathrm{H}})^n} d_{n \Lambda}^{-1} 
\nm{f}_{\Hc_{\Lambda}}^{2n}
\Vert 
P_{n \Lambda} (v_{\Lambda}^{\otimes n})
\Vert_{\Hc_{n \Lambda}}^2
\\ & = \frac{1}{d_{\Lambda}^n} \nm{f}_{\Hc_{\Lambda}}^{2n}
\nm{P_{n \Lambda} (v_{\Lambda}^{\otimes n})}^2_{\Hc_{n \Lambda}}
\\ & = \frac{ \nm{P_{n \Lambda} (v_{\Lambda}^{\otimes n}) }_{\Hc_{n \Lambda}}^2}{ \nm{v_{\Lambda}}_{\Hc_{\Lambda}}^{2n}} \left( \int_G | \langle \pi_{\Lambda}(g) f, v_{\Lambda} \rangle_{\Hc_{\Lambda}} |^2 dg \right)^n,
\end{split}\end{equation*}
with equality if and only if $f = K_u \tau(k) v_{\Lambda}$ for some $u \in D$, $k \in K$ and $v_{\Lambda}$ a highest weight vector in $V_{\Lambda}$. We also know by Theorem \ref{constantthm}
$$\nm{P_{n \Lambda} (v_{\Lambda}^{\otimes n} )}_{\Hc_{n \Lambda}}^2 = d_{n \Lambda}^{-1},\quad
\nm{v_{\Lambda}}_{\Hc_{\Lambda}}^{2n} = d_{\Lambda}^{-n}.$$
We conclude that
$$ \int_G 
|\langle\pi(g)f, v_\Lambda\rangle_{\Hc_{\Lambda}}|^{2n} dg \leq \frac{d_{\Lambda}^{n}}{d_{n \Lambda}} \left( \int_G | \langle \pi(g) f, v_{\Lambda} \rangle_{\Hc_{\Lambda}} |^2 dg \right)^n,$$
with the constant $ \frac{d_{\Lambda}^{n}}{d_{n \Lambda}} = c_G^{n-1} \frac{(d_{\Lambda}^{\mathrm{H}} )^{n}}{d_{n \Lambda}^{\mathrm{H}}}$. This completes the proof.
\end{proof}

\begin{remark}
For the unit disc $D=SU(1,1)/U(1)$ a general inequality is proved in \cite{Fr, ku} with the $p$-norm replaced by any positive convex function. A  challenging 
problem would be to find optimal $L^2-L^p$
estimates for scalar holomorphic discrete series.
\end{remark}

\section{An improved Wehrl
inequality for
the unit disc}\label{improved-W}
\subsection{Irreducible decomposition
of tensor product
discrete series
of $SU(1, 1)$ and differential intertwining operators}

In this section we prove an improved
$L^2-L^{p}$ Wehrl inequality
for the holomorphic discrete series of $SU(1,1)$,
with $p=2n$  an even integer.
For the Fock space $\mathcal F(\mathbb C)$
or equivalently
the $L^2(\mathbb R)$-space
as representation
space of the Heisenberg
group $\mathbb R\rtimes \mathbb C$
an improved Wehrl-type inequality
(for any convex function instead of the $p$-norm)
was recently obtained in \cite{FNT}.
Our result here might provide a method
for obtaining a
more precise 
remainder term
for the improved
$L^2-L^p$-
Wehrl inequalites
for the Heisenberg group
and $SU(1, 1)$.

Let 
$\Hc_{\nu}$ be the weighted Bergman space
of holomorphic functions $f$ on
the unit disk $D\subset \mathbb C$ such that
$$ \nm{f}_{\nu,2}^2 \coloneqq (\nu - 1) \int_{\D} |f(z)|^2(1 - |z|^2)^{\nu} \frac{dm(z)}{\pi (1 - |z|^2)^2} <\infty
$$
where $dm(z)$ as above is the Lebesgue measure.
We also write
$$ \nm{f}_{\nu,p}^p \coloneqq (\nu - 1) \int_{\D} |f(z)|^p(1 - |z|^2)^{\frac{p \nu}{2}} \frac{dm(z)}{\pi (1 - |z|^2)^2}.$$
Note that if $\nu$ is an integer then $\Hc_{\nu}$ is the holomorphic discrete series representation for the representation $\tau_{-\nu}$ of $U(1) \subseteq SU(1,1)$
$$\tau_{\nu}( \begin{pmatrix} e^{i \theta} & 0 \\ 0 & e^{- i \theta} \end{pmatrix}) = e^{-i \nu \theta}.$$
The tensor product
of  holomorphic discrete series of $SU(1,1)$ has 
a decomposition \cite{repkasl2},
$$ \Hc_{\mu} \otimes \Hc_{\nu} \cong \bigoplus_{k=0}^{\infty} \Hc_{\mu + \nu + 2k},$$
where we normalize the inner product on all holomorpic discrete series so that $\langle 1,1 \rangle_{\mu+\nu +2k} = 1$.
Then we have partial isometries
$$ Q_{k}^{\mu, \nu}: 
\Hc_{\mu} \otimes \Hc_{\nu} 
\rightarrow \Hc_{\mu + \nu + 2k}$$
defined by
\begin{equation}
\label{Pk}
Q_{k}^{\mu, \nu}f(\xi)
\coloneqq C_{\mu,\nu,k} \sum_{j=0}^k (-1)^j \binom{k}{j} \frac{1}{(\mu)_j (\nu)_{k-j}} \partial^j_z \partial^{k-j}_w f|_{z=w=\xi}.
\end{equation}
Here $f\in \Hc_{\mu} \otimes \Hc_{\nu} $
is realized as holomorphic function $f(z, w)$
in $(z, w)\in D^2$, the constant is determined by
$$C_{\mu, \nu, k}^{-2} = 
\frac{k! (\mu + \nu + k + 1)_k}{(\mu)_k (\nu)_k},$$
so that $Q_k^{\mu,\nu}$ is a partial isometry \cite{vh-2}. 
We have also  \cite{hjal} 
$$ \Hc_{\nu}^{\otimes n} \cong \bigoplus_{k=0}^{\infty} 
\binom{n + k -2}{k} \Hc_{n \nu + 2k},$$
where we call the projection onto the $\binom{n + k -2}{k} \Hc_{n \nu + 2k}$-component $Q_k$.

\begin{lemma} 
Let  $f \in \Hc_{\nu}$.
The projection
$Q_1(f^{\otimes n} )$
of
 $f^{\otimes n} 
 \in \Hc_{\nu}^{\otimes n} \cong \bigotimes_{k=0}^{\infty} \binom{n+k-2}{n-2} \Hc_{n \nu + 2k}$ 
 onto the next leading
 component $(n-1) \Hc_{n \nu + 2}$ vanishes.
\end{lemma}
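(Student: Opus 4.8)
The plan is to exploit the action of the symmetric group $S_n$ permuting the $n$ tensor factors of $\Hc_\nu^{\otimes n}$, which commutes with the diagonal action of $G=SU(1,1)$. Realizing $\Hc_\nu^{\otimes n}$ as a space of holomorphic functions $F(z_1,\dots,z_n)$ on the polydisc $D^n$, the group $S_n$ acts isometrically by permuting the variables. Since this action commutes with $G$, it preserves the isotypic decomposition $\Hc_\nu^{\otimes n}\cong\bigoplus_{k}\binom{n+k-2}{k}\Hc_{n\nu+2k}$ and hence commutes with each projection $Q_k$. The element $f^{\otimes n}$ corresponds to the totally symmetric function $\prod_{i=1}^n f(z_i)$, so it is $S_n$-invariant; consequently $Q_1(f^{\otimes n})$ is $S_n$-invariant as well.

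Next I would determine the $S_n$-module structure of the $k=1$ isotypic component. Writing this component as $M_1\otimes\Hc_{n\nu+2}$ with $G$ acting on the second factor, Schur's lemma forces $S_n$ to act only through the $(n-1)$-dimensional multiplicity space $M_1$. I would identify $M_1$ with the space of lowest weight vectors of the $k=1$ component, which in the function realization are the degree-one homogeneous polynomials $\sum_i c_i z_i$ annihilated by the total lowering operator $\sum_i\partial_{z_i}$; this is the single linear condition $\sum_i c_i=0$. As an $S_n$-module under permutation of the variables, $M_1$ is therefore the standard representation $\{c\in\C^n:\sum_i c_i=0\}$, which for $n\ge 2$ is irreducible and contains no trivial subrepresentation.

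Combining the two observations finishes the argument: $Q_1(f^{\otimes n})$ lies in the $S_n$-invariant subspace of $M_1\otimes\Hc_{n\nu+2}$, which equals $M_1^{S_n}\otimes\Hc_{n\nu+2}$, and this is zero because the standard representation has no invariant vectors. Hence $Q_1(f^{\otimes n})=0$.

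The step I expect to be the main obstacle is the identification of $M_1$ as an $S_n$-module: one must pin down the lowest weight vectors of the $k=1$ summand inside $\Hc_\nu^{\otimes n}$, verify that the relevant lowering operator is $\sum_i\partial_{z_i}$ in the chosen realization (so that its kernel in degree one has the expected dimension $n-1$), and then recognize the permutation action as the standard representation. A more computational alternative would be to assemble $Q_1$ explicitly from the two-fold differential operators in (\ref{Pk}) and check the vanishing by direct calculation, but the representation-theoretic argument above is cleaner and avoids the combinatorial bookkeeping.
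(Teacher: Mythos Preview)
Your argument is correct and takes a genuinely different route from the paper. The paper proceeds by induction on $n$: for $n=2$ it plugs $f\otimes f$ into the explicit formula \eqref{Pk} for $Q_1^{\nu,\nu}$ and observes the two terms cancel; for the inductive step it writes $\Hc_\nu^{\otimes(n+1)}=\Hc_\nu^{\otimes n}\otimes\Hc_\nu$, notes that the $\Hc_{(n+1)\nu+2}$ isotypic part of $f^{\otimes(n+1)}$ comes from $Q_1^{n\nu,\nu}(Q_0 f^{\otimes n}\otimes f)$ and $Q_0^{n\nu+2,\nu}(Q_1 f^{\otimes n}\otimes f)$, and checks both vanish (the second by the induction hypothesis, the first by a one-line computation). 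Your approach replaces this recursive bookkeeping by the Schur--Weyl type observation that the multiplicity space $M_1$ of the $k=1$ isotypic component carries the standard representation of $S_n$, which has no invariants; since $f^{\otimes n}$ is $S_n$-symmetric and $Q_1$ commutes with $S_n$, the projection vanishes. The paper's argument is elementary and reuses the differential operators \eqref{Pk} already set up for the subsequent Theorem~\ref{improved-ineq}; yours is cleaner conceptually, avoids induction, and would extend to other $Q_k$ once one identifies $M_k$ as an $S_n$-module. The one point needing care---that the Lie-algebra lowering acts on degree-one polynomials as a nonzero multiple of $\sum_i\partial_{z_i}$---is standard for the holomorphic discrete series of $SU(1,1)$.
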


\begin{proof}
We do this by induction on $n\ge 2$. For $n=2$ we have
$$Q_{1}^{\nu,\nu}(f \otimes f) = C_{\nu,\nu,k} \left( \frac{1}{\nu} f'(\xi) f(\xi) - \frac{1}{\nu} f(\xi) f'(\xi) \right) = 0.$$
Now assume it is true for $\Hc_{\nu}^{\otimes n}$. We consider $\Hc_{\nu}^{\otimes (n+1)} = \Hc_{\nu}^{\otimes n} \otimes \Hc_{\nu}$, 
 the second
tensor factor is
$$ \Hc_{\nu}^{\otimes n}= \bigoplus_{k=0}^{\infty} \binom{n+k-2}{n-2} \Hc_{n \nu + 2k}$$
and $Q_{1}f^{\otimes n}=0$.
We look for the projection $F:=Q_{1}f^{\otimes (n+1)}
$ of $f^{\otimes (n+1)}
$ onto the
$n\Hc_{(n+1) \nu + 2}$-isotypic component.
It is obtained by
$$
F=Q_{1}^{n \nu, \nu}(Q_{0}f^{\otimes n}
\otimes f)+ Q_{0}^{n \nu + 2, \nu}(Q_{1}f^{\otimes n}
\otimes f)
=Q_{1}(Q_{0}f^{\otimes n}
\otimes f)
$$
which furthermore is 
\begin{equation*} F=
 C_{\nu, n \nu,1} \left( \frac{1}{\nu} f^{n}(\xi)f'(\xi) - \frac{1}{n \nu} n f'(\xi) f^{n}(\xi) \right) = 0,
\end{equation*}
completing the proof.
\end{proof}

\subsection{An improved
Wehrl inequality for the
unit disc}

Now we look at the projection onto the second factor $\Hc_{n \nu + 4}$ and obtain a stricter inequality.

\begin{theorem}
\label{improved-ineq}
We have the following improved
Wehrl $L^2-L^{2n}$ inequality 
\begin{equation}
\label{ineqest}
\nm{f^n}^2_{n \nu,2} 
+
\frac
{2 \nu^2(\nu+1)^2}
{(2 \nu + 3)(2 \nu + 4)} 
\nm
{
\left( \frac{f'' f}{(\nu)_2} 
- \frac{(f')^2}{\nu^2} \right) f^{n-2}
}_{n \nu + 4,2}^2
\leq \nm{ f}_{\nu,2}^{2n}.
\end{equation}
\end{theorem}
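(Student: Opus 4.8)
The plan is to expand $\nm{f}_{\nu,2}^{2n}=\nm{f^{\otimes n}}^2$ along the orthogonal decomposition $\Hc_{\nu}^{\otimes n}\cong\bigoplus_{k}\binom{n+k-2}{k}\Hc_{n\nu+2k}$ and retain only the first two surviving terms. The $k=0$ (Cartan) component is $Q_0 f^{\otimes n}=f^{n}\in\Hc_{n\nu}$, whose squared norm is $\nm{f^n}_{n\nu,2}^2$; this is the ordinary Wehrl term. The preceding lemma shows the $k=1$ component $Q_1 f^{\otimes n}$ vanishes, so the leading correction lives in the $k=2$ isotypic space $\Hc_{n\nu+4}$. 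It therefore suffices to isolate a single copy of $\Hc_{n\nu+4}$, compute the squared norm of the orthogonal projection of $f^{\otimes n}$ onto it, and invoke Bessel's inequality, the remaining $k\ge 3$ terms being nonnegative.

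To produce that copy explicitly I would apply the partial isometry $Q_2^{\nu,\nu}$ to the first two tensor factors and then fold in the remaining $n-2$ factors by the leading projections, which (since $C_{\mu,\nu,0}=1$) act by pointwise multiplication. Writing $\Phi:=J\circ(Q_2^{\nu,\nu}\otimes\mathrm{id}^{\otimes(n-2)})$, where $J$ is the iterated leading projection onto the Cartan component $\Hc_{n\nu+4}$ of $\Hc_{2\nu+4}\otimes\Hc_{\nu}^{\otimes(n-2)}$, formula (\ref{Pk}) with $k=2$ gives $Q_2^{\nu,\nu}(f\otimes f)(\xi)=2C_{\nu,\nu,2}\bigl(\frac{f f''}{(\nu)_2}-\frac{(f')^2}{\nu^2}\bigr)(\xi)$, whence $\Phi(f^{\otimes n})=2C_{\nu,\nu,2}\bigl(\frac{f f''}{(\nu)_2}-\frac{(f')^2}{\nu^2}\bigr)f^{n-2}\in\Hc_{n\nu+4}$. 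Using $C_{\nu,\nu,2}^{2}=\frac{\nu^2(\nu+1)^2}{2(2\nu+3)(2\nu+4)}$ then yields $\nm{\Phi(f^{\otimes n})}_{n\nu+4,2}^2=\frac{2\nu^2(\nu+1)^2}{(2\nu+3)(2\nu+4)}\nm{\bigl(\frac{f''f}{(\nu)_2}-\frac{(f')^2}{\nu^2}\bigr)f^{n-2}}_{n\nu+4,2}^2$, which is exactly the second term of (\ref{ineqest}).

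It remains to check that $\Phi$ is a partial isometry onto its copy of $\Hc_{n\nu+4}$, so that $\nm{\Phi(f^{\otimes n})}^2$ truly equals the squared norm of the orthogonal projection of $f^{\otimes n}$ onto that copy. Here $Q_2^{\nu,\nu}\otimes\mathrm{id}$ is a partial isometry whose range is all of $\Hc_{2\nu+4}\otimes\Hc_{\nu}^{\otimes(n-2)}$, which is precisely the domain of $J$; since $J^*J$ is the projection onto the Cartan copy inside that domain, $\Phi^*\Phi=(Q_2^{\nu,\nu}\otimes\mathrm{id})^*\,J^*J\,(Q_2^{\nu,\nu}\otimes\mathrm{id})$ is again an orthogonal projection, confirming the claim. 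Because $\Phi(f^{\otimes n})$ is an $\Hc_{n\nu+4}$-isotypic vector while $f^{n}$ is $\Hc_{n\nu}$-isotypic, the two are orthogonal in $\Hc_{\nu}^{\otimes n}$, and Bessel's inequality $\nm{f^n}_{n\nu,2}^2+\nm{\Phi(f^{\otimes n})}^2\le\nm{f^{\otimes n}}^2=\nm{f}_{\nu,2}^{2n}$ is precisely (\ref{ineqest}).

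The main obstacle I anticipate is the bookkeeping at this last step: for $n\ge 3$ the $k=2$ space carries multiplicity $\binom{n}{2}$, so one must verify that $\Phi$ singles out one genuine orthogonal summand (rather than an oblique combination), and that this summand is orthogonal to the Cartan component. Once the partial-isometry identity $\Phi^*\Phi=$ projection is established this is automatic, but it is the place where the argument could most easily go wrong; the base case $n=2$, where $\binom{n}{2}=1$ and $\Phi=Q_2^{\nu,\nu}$ recovers the full $k=2$ projection, serves as a useful consistency check on the constant.
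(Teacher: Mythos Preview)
Your proposal is correct and follows essentially the same route as the paper: apply $Q_2^{\nu,\nu}$ to the first two tensor factors, multiply pointwise by $f^{n-2}$ via the iterated Cartan projection, and use orthogonality of the resulting $\Hc_{n\nu+4}$-component to the $\Hc_{n\nu}$-component. Your argument is in fact more careful than the paper's, which simply asserts the inequality after computing $Q_2^{\nu,\nu}(f\otimes f)$; you explicitly verify that $\Phi=J\circ(Q_2^{\nu,\nu}\otimes\mathrm{id})$ is a partial isometry via the identity $(A^*PA)^2=A^*P(AA^*)PA=A^*PA$ (using surjectivity of $A$), which is exactly the point where the multiplicity issue for $n\ge 3$ is resolved.
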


\begin{proof} We 
study the contribution to the component 
with highest weight
$\Hc_{n \nu + 4}$
in the tensor product $
f^{\otimes n} $.
There is
one contribution
obtained from $Q_2^{\nu,\nu}
(f \otimes f) 
\in \Hc_{2 \nu + 4} \subseteq \Hc_{\nu} \otimes \Hc_{\nu}$ 
and $f^{\otimes (n-2)} $,
$$ f^{\otimes n} \mapsto \left( z \mapsto f^{n-2}(z)
Q_{2\nu +4}^{\nu, \nu}
(f \otimes f)(z) \right).$$
Here
$Q_{2\nu +4}^{\nu, \nu}$ is the 
projection
\begin{equation*}\begin{split}
Q_{2\nu +4}^{\nu, \nu}(f \otimes f) &= \frac{\nu(\nu+1)}{\sqrt{2 (2 \nu + 3)(2 \nu + 4)}} 2 \left( \frac{1}{(\nu)_2} f'' f - \frac{1}{\nu^2} (f')^2 \right)
\\ & = \frac{\sqrt{2} \nu(\nu+1)}{\sqrt{(2 \nu + 3)(2 \nu + 4)}} \left( \frac{1}{(\nu)_2} f'' f - \frac{1}{\nu^2} (f')^2 \right).
\end{split}\end{equation*}
Thus we obtain 
\begin{equation*}\frac{\sqrt{2} \nu(\nu+1)}{\sqrt{(2 \nu + 3)(2 \nu + 4)}} \left( \frac{1}{(\nu)_2} f'' f - \frac{1}{\nu^2} (f')^2 \right)f^{n-2} \in \Hc_{n \nu + 4},
\end{equation*}
and 
\begin{equation*}
  \nm{f}_{\nu,2}^{2n} \geq \nm{f^n}^2_{n \nu,2} + \nm{\frac{\sqrt{2} \nu(\nu+1)}{\sqrt{(2 \nu + 3)(2 \nu + 4)}} \left( \frac{1}{(\nu)_2} f'' f - \frac{1}{\nu^2} (f')^2 \right) f^{n-2}}_{n \nu + 4,2}^2,
\end{equation*}
completing the proof.
\end{proof}

We note that 
the second summand in (\ref{ineqest}) 
is vanishing
exactly when $
\frac{1}{(\nu)_2} f'' f - \frac{1}{\nu^2} (f')^2 = 0$,
i.e.
$$
f'' f - \frac{\nu +1}{\nu} (f')^2 = 0.$$
The solutions are 
exactly the reproducing kernels $K_w$. Indeed
we can assume by $SU(1,1)$-invariance that $f(0) \neq 0$ and 
further  $f(0) = 1$. Suppose $f'(0)=c$.
Then we can recursively determine all the
derivatives $f^{(n)}(0)$ and find
$f^{(n)}(0) = \frac{(\nu)_n}{\nu^n} c^n
$. Thus $f(z) = 1  +\sum_{n=1}^\infty
\frac{(\nu)_n}{\nu^n} \frac{c^n }{n!} z^n$.
This is in the Bergman space if and only if $\frac{|c|}{\nu}<1$, in which
case $f(z)= K(z, w)$ with $w=\frac{c}{\nu}$.
We have thus found
a stronger inequality
and identified
the minimizer for the
extra summand.

\appendix
\section{Wehrl inequality
for matrix coefficients
of representations of compact Lie groups}
\label{cptwehrl}

We have used the 
Wehrl-inequality
for matrix coefficients
of representations
of compact Lie groups
in our proof of
the inequality
for non-compact
 hermitian symmetric spaces $D=G/K$. This inequality was
 stated in \cite{sugi}
 for tensor powers
 $V^{\otimes n}$
 for general $n\ge 2$, referring further back to \cite{DF} for $n=2$. However, the proof in \cite{DF} 
 is incomplete. 
The precise gap is that
the maximizer is proved to be an eigenvector of {\it one
 element} in the Cartan subalgebra but is claimed to be an eigenvector for {\it the whole Cartan subalgebra}. As we show below we do not actually need this fact, 
and 
 Cauchy-Schwarz inequality
 is need as  a critical step.

We  recall the Casimir operator. Let $\kf$ be the   Lie algebra
of a compact
semisimple Lie group $K$,
$\kf^{\C}$  the complexification
and $\kappa$ 
the Killing form. Let
$\{T_i\}$ be an orthonormal basis of $i\kf$ .
The Casimir element is 
$$C = \sum_i T_i^2 $$
and it acts on a representation
 $(V_{\tau}, \tau, K)$ of $K$
 as
$$ C = \sum_i \tau(T_i)^2.$$
 If $\tau$ is 
irreducible 
with highest weight $\Lambda$
then
$C$ acts on $V_{\Lambda}$ as the constant
$$ \langle \Lambda+\rho, \Lambda+\rho \rangle - \langle \rho, \rho \rangle,$$
where $\rho = \frac{1}{2} \sum_{\alpha \in \Delta^+} \alpha$ is the half-sum of the positive roots \cite[Exercise 23.4]{hum}.
(We have chosen $T_i$ to be
a basis of $i\kf$ so that
each $\tau(T_i)$ is
self-adjoint and $C$  non-negative.)

Recall further that if $V_{\Lambda_1}$ and $V_{\Lambda_2}$ are two finite-dimensional
irreducible representations of a semisimple Lie algebra with highest weights $\Lambda_1$ and
$\Lambda_2$ 
then
\begin{equation}
\label{tensor-2}
V_{\Lambda_1} \otimes V_{\Lambda_2} \cong \bigoplus_{\Lambda} V_{\Lambda},
\end{equation}
where $\Lambda \leq \Lambda_1 + \Lambda_2$ and $V_{\Lambda_1 + \Lambda_2}$ appears exactly once in the decomposition. This is clear from the weight space decomposition \cite[Chapter 21]{hum}.

\begin{proposition}
\label{wehrlcptprop}
\begin{enumerate}
\item Let $K$ be a compact Lie group 
and 
$(\tau_{\Lambda}, V_{\Lambda})$ 
an irreducible representation 
of highest
weight $\Lambda$.
Consider the irreducible
decomposition
$$V_{\Lambda}^{\otimes n} \cong 
\bigoplus_{\Lambda'} m_{\Lambda'}
V_{\Lambda'}$$
and 
$$v^{\otimes n} 
= \sum_{\Lambda'} v_{\Lambda'},
\,\, v_{\Lambda'}\in 
m_{\Lambda'}
V_{\Lambda'}$$
for 
$v\in V_{\Lambda}$ and $n\ge 2$.
We have $v^{\otimes n} \in V_{n\Lambda}$ if and only if $v = \tau(k) v_{\Lambda}$ for
some $k\in K$ and a highest weight vector $v_{\Lambda}$.
\item For any unit vector $v \in V_{\Lambda}$ 
the following Wehrl inequality holds,
\begin{equation}
\label{wehrlcpt}
 \int_{K} | \langle \tau(k) v, v_{\Lambda} \rangle|^{2n} d k \leq \frac{1}{\dim(V_{n \Lambda})},
\end{equation}
where $v_{\Lambda}$ is a unit highest weight vector, and the equality holds if and only if $v = \tau(k_0) v_{\Lambda}$ for some $k_0 \in K$ and some highest weight vector $v_{\Lambda}$.
\end{enumerate}
\end{proposition}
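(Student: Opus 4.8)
The plan is to establish part (1) first and then deduce part (2) from it by Schur orthogonality. In (1) the ``if'' direction is immediate: if $v=\tau(k)v_\Lambda$ then $v^{\otimes n}=\tau^{\otimes n}(k)\,v_\Lambda^{\otimes n}$, and since $v_\Lambda^{\otimes n}$ is the highest weight vector generating the (multiplicity-one) Cartan component $V_{n\Lambda}$, it lies in the $K$-invariant subspace $V_{n\Lambda}$. So the content is the converse: assuming $\|v\|=1$ and $v^{\otimes n}\in V_{n\Lambda}$ with $n\ge 2$, show $v$ is a $K$-translate of $v_\Lambda$.

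For the converse I would compute the Casimir $C=\sum_i\tau(T_i)^2$ on $V_\Lambda^{\otimes n}$ in two ways. Writing the $n$-fold action as $\sum_{j=1}^n\tau(T_i)^{(j)}$, where $\tau(T_i)^{(j)}$ acts by $\tau(T_i)$ in the $j$-th tensor slot and by the identity elsewhere, I expand $C=\sum_i(\sum_j\tau(T_i)^{(j)})^2$ and evaluate $\langle C\,v^{\otimes n},v^{\otimes n}\rangle$. The $n$ diagonal terms $j=l$ together give $n\,c_\Lambda$ with $c_\Lambda=\langle\Lambda+\rho,\Lambda+\rho\rangle-\langle\rho,\rho\rangle$, and each of the $n(n-1)$ cross terms $j\ne l$ contributes $Q(v):=\sum_i\langle\tau(T_i)v,v\rangle^2$, using that each $\tau(T_i)$ is self-adjoint. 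On the other hand $C$ acts on $V_{n\Lambda}$ by $c_{n\Lambda}=\langle n\Lambda+\rho,n\Lambda+\rho\rangle-\langle\rho,\rho\rangle$, so comparing and using $c_{n\Lambda}-n\,c_\Lambda=n(n-1)\langle\Lambda,\Lambda\rangle$ yields
\[
Q(v)=\sum_i\langle\tau(T_i)v,v\rangle^2=\langle\Lambda,\Lambda\rangle.
\]
The hypothesis $n\ge2$ enters precisely here, since for $n=1$ there are no cross terms.

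The critical step is then a Cauchy--Schwarz argument. Set $M=\sum_i\langle\tau(T_i)v,v\rangle\,T_i\in i\kf$, so $\|M\|^2=Q(v)=\langle\Lambda,\Lambda\rangle$ and $\langle\tau(M)v,v\rangle=Q(v)=\|M\|^2$. Conjugating $M=\Ad(k)M_0$ with $M_0$ dominant in $i\h$, the self-adjoint operator $\tau(M)$ has largest eigenvalue $\max_\mu\mu(M_0)=\Lambda(M_0)$, whence
\[
\|M\|^2=\langle\tau(M)v,v\rangle\le \Lambda(M_0)=\langle H_\Lambda,M_0\rangle\le\|\Lambda\|\,\|M_0\|=\|\Lambda\|\,\|M\|,
\]
where $H_\Lambda\in i\h$ represents $\Lambda$ (so $\Lambda(H)=\langle H_\Lambda,H\rangle$ and $\|H_\Lambda\|=\|\Lambda\|$) and $\|M_0\|=\|M\|$ as $\Ad(k)$ is orthogonal. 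Since $\|M\|=\|\Lambda\|$, all inequalities are equalities: the last one forces $M_0=t\,H_\Lambda$ with $t>0$, and equality in the eigenvalue bound forces $\tau(k)^{-1}v$ into the top eigenspace of $\tau(H_\Lambda)$. The main obstacle, which is exactly the point left incomplete in \cite{DF}, is to show this top eigenspace is the single line $\C v_\Lambda$ even for singular $\Lambda$. I would settle this by a second Cauchy--Schwarz on weights: the eigenspace is the sum of weight spaces with $\langle\mu,\Lambda\rangle=\langle\Lambda,\Lambda\rangle$, while every weight $\mu$ of $V_\Lambda$ satisfies $\|\mu\|\le\|\Lambda\|$ since the weights lie in the convex hull of the Weyl orbit of $\Lambda$; hence $\langle\Lambda,\Lambda\rangle=\langle\mu,\Lambda\rangle\le\|\mu\|\,\|\Lambda\|\le\langle\Lambda,\Lambda\rangle$ forces $\mu=\Lambda$. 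Thus $\tau(k)^{-1}v\in\C v_\Lambda$, i.e.\ $v=\tau(k)(c\,v_\Lambda)$ with $|c|=1$, which is the desired form.

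For part (2) I would use Schur orthogonality. Since $v_\Lambda^{\otimes n}\in V_{n\Lambda}$, for each $k\in K$ we have $\langle\tau(k)v,v_\Lambda\rangle^n=\langle\tau^{\otimes n}(k)P_{n\Lambda}v^{\otimes n},v_\Lambda^{\otimes n}\rangle$ with $P_{n\Lambda}$ the projection onto $V_{n\Lambda}$. Squaring, integrating over $K$, and applying Schur orthogonality on the irreducible $V_{n\Lambda}$ gives
\[
\int_K|\langle\tau(k)v,v_\Lambda\rangle|^{2n}\,dk=\frac{\|P_{n\Lambda}v^{\otimes n}\|^2}{\dim V_{n\Lambda}}\le\frac{1}{\dim V_{n\Lambda}},
\]
using $\|v^{\otimes n}\|=\|v_\Lambda^{\otimes n}\|=1$. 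Equality holds iff $\|P_{n\Lambda}v^{\otimes n}\|=1$, i.e.\ $v^{\otimes n}\in V_{n\Lambda}$, which by part (1) is equivalent to $v=\tau(k_0)v_\Lambda$.
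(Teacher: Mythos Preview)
Your proof is correct and follows the same strategy as the paper: compute the Casimir to obtain $\sum_i\langle\tau(T_i)v,v\rangle^2=\langle\Lambda,\Lambda\rangle$, then use Cauchy--Schwarz (the step missing in \cite{DF}) to force $v$ into the $K$-orbit of the highest weight line, and deduce part (2) from part (1) via Schur orthogonality. The only differences are cosmetic streamlinings: you handle general $n\ge2$ directly in the Casimir inner-product computation (the paper treats $n=2$ first and then reduces general $n$ to it by a separate tensor-factor norm argument), and your identification of the top eigenspace of $\tau(H_\Lambda)$ as the line $\mathbb{C}v_\Lambda$ via $\langle\mu,\Lambda\rangle\le\|\mu\|\,\|\Lambda\|\le\|\Lambda\|^2$ is slightly more direct than the paper's route through the extremal Weyl-orbit weights $\sigma\Lambda$.
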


\begin{proof} 
We prove the Proposition only for compact semisimple Lie groups and it implies
the general result. We prove first that the second part is a consequence of
the first one.
Indeed, let
$$ P_{n \Lambda}: V_{\Lambda}^{\otimes n} \rightarrow V_{n \Lambda}$$
be the projection. Then  $w_{n \Lambda} = P_{n \Lambda}(v_{\Lambda}^{\otimes n}) \in V_{n \Lambda}$ is a highest weight vector of unit length and we have that $P_{n \Lambda}^*P_{n \Lambda}(v_{\Lambda}^{\otimes n}) = v_{\Lambda}^{\otimes n}$, implying $v_{\Lambda}^{\otimes n} \in V_{n \Lambda} \subseteq V_{\Lambda}^{\otimes n}$, so
\begin{equation*}\begin{split}
& \int_{K} | \langle \tau_{\Lambda}(k) v, v_{\Lambda} \rangle_{V_{\Lambda}} |^{2n} d k = \int_K |\langle (\tau_{\Lambda}(k) v)^{\otimes n}, v_{\Lambda}^{\otimes n} \rangle_{V_{\Lambda}^{\otimes n}}|^2 dk
\\ & = \int_K |\langle \tau_{n \Lambda}(k) P_{n \Lambda} (v^{\otimes n}), w_{\Lambda} \rangle_{V_{n \Lambda}}|^2 dk = \frac{1}{\dim(V_{n \Lambda})} \nm{P_{n \Lambda}(v^{\otimes n})}_{V_{n \Lambda}}^2.
\end{split}\end{equation*}
This immediately implies the inequality 
in (\ref{wehrlcpt}), and the equality holds
if and only if $v = \tau(k) v_{\Lambda}$ for a highest weight vector $v_{\Lambda}$ as $\nm{P_{n \Lambda}(v^{\otimes n})}_{V_{n \Lambda}} = \nm{P_{n \Lambda}(v_{\Lambda}^{\otimes n})}_{n \Lambda} = 1$ implies for any unit vector $v$ that $v \in V_{n \Lambda} \subseteq V_{\Lambda}^{\otimes n}$. It follows also that
\begin{equation}
 \int_{K} | \langle \tau(k) v, v_{\Lambda} \rangle|^{2n} d k \leq \int_K | \langle \tau(k) v_{\Lambda}, v_{\Lambda} \rangle|^{2n} d k,
\end{equation}
with equality if and only if $v = \tau(k) v_{\Lambda}$ for a highest weight vector $v_{\Lambda}$.

We now prove the first part. 
It follows
from
(\ref{tensor-2})
 that
 $V_{n \Lambda} \subseteq V_{\Lambda}^{\otimes n}$ with multiplicity one, and that the other representations appearing in the decomposition of $V_{\Lambda}^{\otimes n}$ have lower highest weights. Thus the sufficiency
 of the claim  is clear,
 and we prove the 
 necessity. 
 We prove it first for $n = 2$,
 so we assume that $v$ is a unit vector  
and  $v \otimes v \in V_{2 \Lambda} $.

Let $C = \sum_i T_i^2 $
be the Casimir element as above.
Fix a choice of a Cartan subalgebra $\h \subseteq \kf$.
Consider the
 decomposition
$$V_{\Lambda} \otimes V_{\Lambda} \cong \bigoplus m_{\Lambda'} V_{\Lambda'},$$
 the leading 
multiplicity being 
$m_{2 \Lambda} = 1$ and $\Lambda' \leq 2 \Lambda$. Hence $v \otimes v \in V_{2 \Lambda} 
\subseteq V_{\Lambda}^{\otimes 2}$ if and only if
\begin{equation}
\label{condV2l}
(\tau_{\Lambda} \otimes \tau_{\Lambda}) (C) ( v \otimes v ) = (\langle 2 \Lambda+\rho, 2 \Lambda+\rho \rangle - \langle \rho, \rho \rangle) v \otimes v.
\end{equation}
On the other hand
\begin{equation*}\begin{split}
 (\tau_{\Lambda} \otimes \tau_{\Lambda}) (C) &= \sum_i (\tau \otimes \tau)(T_i)^2 = \sum_i \left( \tau(T_i) \otimes I + I \otimes \tau(T_i) \right)^2
\\ & = \sum_i \tau(T_i)^2 \otimes I + I \otimes \tau(T_i)^2 + 2 \tau(T_i) \otimes \tau(T_i)
\\ & = \tau(C) \otimes I + I \otimes \tau(C) + 2 \sum_i \tau(T_i) \otimes \tau(T_i).
\end{split}\end{equation*}
Thus  for any $v$,
\begin{equation}
\label{tau-2-C-v} (\tau_{\Lambda} \otimes \tau_{\Lambda}) (C) ( v \otimes v ) = 2(\langle \Lambda+\rho, \Lambda+\rho \rangle - \langle \rho, \rho \rangle)(v \otimes v) + 2 \sum_i \tau(T_i) v \otimes \tau(T_i) v.
\end{equation}
Comparing 
(\ref{condV2l}) 
with (\ref{tau-2-C-v})
we find that $v \otimes v \in V_{2 \Lambda}$
if and only if
\begin{equation}
\label{eig-con} 
 \sum_i \tau(T_i) v \otimes \tau(T_i) v = \langle \Lambda, \Lambda \rangle v \otimes v.
 \end{equation}
By taking the first tensor factor we
see that
 (\ref{eig-con})
implies 
\begin{equation}
\label{eig-con-1} 
\sum_i \langle \tau(T_i) v, v \rangle \tau(T_i) v = \langle \Lambda, \Lambda \rangle v.
\end{equation}
Note that $\tau(T_i)$ is self-adjoint and thus this gives an element
$$ \sum_i \langle \tau(T_i) v, v \rangle T_i \in i \kf.$$
By \cite[Theorem 4.34]{knapp} there is a $k \in K$ such that
$$\Ad(k) \left( \sum_i \langle \tau(T_i) v, v \rangle T_i \right) \in i \h \subseteq \h^{\C},$$
and thus
\begin{equation}
\label{eig-con-2} 
 \Ad(k) \left( \sum_i \langle \tau(T_i) v, v \rangle T_i \right) = \sum_j a_j H_j
 \end{equation}
where $H_j$ is an orthonormal 
basis 
of $i\h$
w.r.t. the Killing form $\kappa$.
Note that
\begin{equation}
\begin{split}
\label{a-norm}
 \sum_j a_j^2 &= \kappa ( \sum_j a_j H_j, \sum_j a_j H_j )
\\ & = \kappa(\Ad(k) \left( \sum_i \langle \tau(T_i) v, v \rangle T_i \right), \Ad(k) \left( \sum_i \langle \tau(T_i) v, v \rangle T_i \right) )
\\ & = \kappa(\sum_i \langle \tau(T_i) v, v \rangle T_i, \sum_i \langle \tau(T_i) v, v \rangle T_i) = \sum_i \langle \tau(T_i) v, v \rangle^2
\\ & =  \langle \sum_i (\tau(T_i) \otimes \tau(T_i)) (v \otimes v), v \otimes v \rangle = \langle \Lambda, \Lambda \rangle.
\end{split}
\end{equation}

Applying $\tau(k)$
to (\ref{eig-con-1})
and using 
 (\ref{eig-con-2})
we find that $w:=\tau(k)v$ is an eigenvector of $\tau(\sum_j a_j H_j)$,
$$
\tau(\sum_j a_j H_j)
w=
\langle \Lambda, \Lambda \rangle 
w.
$$
Decompose
$$w=\tau(k)v = \sum w_{\Lambda'},
\quad 0 \neq w_{\Lambda'} \in W_{\Lambda'}
$$
as sum of weight vectors under the decomposition  $V_{\Lambda} = \bigoplus W_{\Lambda'}$ into weight subspaces of $\h^{\C}$. Now we get
\begin{equation*}
\langle \Lambda, \Lambda \rangle 
w =\tau(\sum_j a_j H_j)w = \sum_{\Lambda'} \left( \sum_j a_j \Lambda'(H_j) \right) w_{\Lambda'},
\end{equation*}
so that  $\sum_j a_j \Lambda'(H_j) = \langle \Lambda, \Lambda \rangle$.
The
Cauchy-Schwarz inequality
and (\ref{a-norm})
imply 
$$
\langle \Lambda, \Lambda \rangle=
\sum_j a_j \Lambda'(H_j) \leq \left( \sum_j a_j^2 \right)^{\frac{1}{2}} \left( \sum_j \Lambda'(H_j)^2 \right)^{\frac{1}{2}} = \langle \Lambda, \Lambda \rangle^{\frac{1}{2}} \langle \Lambda', \Lambda' \rangle^{\frac{1}{2}}.$$
But $\langle \Lambda', \Lambda' \rangle \le \langle \Lambda, \Lambda \rangle$
with equality only for $\Lambda' = \sigma \Lambda$ for some element $\sigma$ in the Weyl group $W$ by \cite[Theorem 5.5]{knapp}. So we find
all $w_{\Lambda'} = 0$ unless $\Lambda' = \sigma \Lambda$ for some $\sigma \in W$. Hence (by taking into account the
Weyl group element $\sigma$)
there is  $k \in K$ such that $\tau(k)v=w=w_\Lambda$ is a
highest weight vector.

Now we prove our claim  for general $n > 2$,
and we assume $v$ is a unit vector, $
v^{\otimes n} \in V_{n\Lambda}$.
Observe that $
v^{\otimes n}  
= v^{\otimes 2} \otimes v^{\otimes (n-2)}
\in V^{\otimes 2}
\otimes 
 V^{\otimes (n-2)}
$ and the first factor has
a decomposition 
\begin{equation}
\label{v-sq-dec}
v^{\otimes 2} = \sum_{\Lambda'\le 2\Lambda} v_{\Lambda'}
\in \bigoplus_{\Lambda'
\le 2\Lambda} m_{\Lambda'} V_{\Lambda'}.
\end{equation}
 For any $\Lambda'\leq 2\Lambda$, $\Lambda'\ne 2\Lambda$
we have
$$
V_{\Lambda'} 
\otimes V_{\Lambda}^{\otimes (n-2)}
=\sum_{\Lambda''} m(\Lambda'') V_{\Lambda''}
$$
with each $\Lambda'' < 
n\Lambda 
$.
Thus $V_{n\Lambda}\perp 
V_{\Lambda'} 
\otimes V_{\Lambda}^{\otimes (n-2)}
$ in $V^{\otimes n}$
and in particular $P_{n\Lambda}(v_{\Lambda'} \otimes v^{\otimes (n-2)}) =0.$
Therefore
$$
P_{n \Lambda}( v^{\otimes n}) = P_{n \Lambda} \left( ( \sum_{\Lambda'} v_{\Lambda'}) \otimes v^{\otimes (n-2)} \right) = P_{n \Lambda} (v_{2 \Lambda} \otimes v^{\otimes (n-2)}).$$
This implies that
$$
1=\Vert 
v^{\otimes n} 
\Vert =
\nm{P_{n \Lambda}(v^{\otimes n})} = \nm{P_{n \Lambda} (v_{2 \Lambda} \otimes v^{\otimes (n-2)})} \leq \nm{v_{2 \Lambda}} \cdot \nm{v}^{n-2} = \nm{v_{2 \Lambda}}\le 1.$$
Thus all other components
$v_{\Lambda'}$ in (\ref{v-sq-dec})
vanish and $
v^{\otimes 2}= 
v_{2 \Lambda} 
\in V_{2 \Lambda}$. 
This  reduces to the case
$n=2$ and completes the proof.
\end{proof}

\section{Bounded Point Evaluations for  Bergman Spaces of Vector-Valued Holomorphic Functions}
\label{BPV}
We prove what bounded point evaluations 
for our Bergman space
of vector valued
holomorphic functions on $D$
are given by points in $D$. 
This might be known fact for a larger
class of Bergman spaces but
we can not find some exact reference
and we present here an elementary proof.

\begin{lemma}
\label{evbdd}
Let $u\in \mathfrak p^+$, $0 \neq v_0 \in V_{\Lambda}$ be a fixed
vector, and consider
the evaluation of  polynomials $p \in \mathrm{Pol}(\mathbb C^N) \otimes V_{\Lambda}
\subset \Hc_{\Lambda}$,
$$ p \mapsto \langle p(u), v_0 \rangle_{\tau}.$$
It is  bounded on 
the Hilbert space $\mathcal H_{\Lambda}$  if and only if $u \in D$.
\end{lemma}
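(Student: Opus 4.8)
The plan is to prove the two implications separately, with the forward direction essentially immediate and the converse carrying the content.

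For the \emph{if} direction, suppose $u \in D$. By \Cref{constantthm} the reproducing kernel is $K(z,w) = d_{\Lambda}\tau(B(z,\li{w}))$, so $K_u v_0 \in \Hc_{\Lambda}$ and the defining reproducing property gives $\langle p, K_u v_0 \rangle_{\Hc_{\Lambda}} = \langle p(u), v_0 \rangle_{\tau}$ for every $p \in \mathrm{Pol}(\C^N)\otimes V_{\Lambda}$. Cauchy--Schwarz then yields $|\langle p(u), v_0\rangle_{\tau}| \le \Vert K_u v_0\Vert_{\Hc_{\Lambda}}\, \Vert p\Vert_{\Hc_{\Lambda}}$, so the evaluation is bounded. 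Here $\Vert K_u v_0\Vert^2 = d_{\Lambda}\langle \tau(B(u,\li{u}))v_0, v_0\rangle_{\tau} < \infty$.

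For the \emph{only if} direction I argue by contraposition: assuming $u \notin D$ I will exhibit a sequence of polynomials whose $\Hc_{\Lambda}$-norms tend to $0$ while the functional stays bounded away from $0$. First I reduce to a diagonal $u$. By the polar decomposition \cite[Theorem 3.17]{loos} write $u = \Ad(k)u_0$ with $u_0 = \sum_{j=1}^r t_j e_j$, $t_j \ge 0$; since $|u| \ge 1$ we have $\max_j t_j \ge 1$, and relabelling the frame (a Weyl group element, realized in $K$) I may assume $t_1 \ge 1$. Because $\pi_{\Lambda}(k)$ is a unitary of $\Hc_{\Lambda}$ that preserves polynomials and satisfies $(\pi_{\Lambda}(k)p)(u) = \tau(k)\,p(u_0)$ (using $J_k = k$ on $\mathfrak p^+$), one checks $\langle (\pi_{\Lambda}(k)p)(u), \tau(k)v_0\rangle_{\tau} = \langle p(u_0), v_0\rangle_{\tau}$. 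Hence $p \mapsto \langle p(u), v_0\rangle_{\tau}$ is bounded if and only if $p \mapsto \langle p(u_0), w\rangle_{\tau}$ is, where $w := \tau(k)^{-1}v_0 \neq 0$. It therefore suffices to treat a diagonal point $u_0$ with $t_1 \ge 1$ and an arbitrary nonzero $w$.

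For this I use the monomial test functions $p_m(z) := z_1^m\, w$, where $z_1 := \langle z, e_1\rangle$ is holomorphic and linear in $z$. By orthonormality of the frame, $p_m(u_0) = t_1^m\, w$, so $\langle p_m(u_0), w\rangle_{\tau} = t_1^m\Vert w\Vert_{\tau}^2 \ge \Vert w\Vert_{\tau}^2 > 0$. On the other hand, writing $d\mu(z) := \langle \tau(B(z,\li{z})^{-1})w, w\rangle_{\tau}\, d\iota(z)$, this is a \emph{finite} positive measure on $D$, of total mass $\Vert w\Vert_{\Hc_{\Lambda}}^2 < \infty$ because the constant function $w$ lies in $\Hc_{\Lambda}$. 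Thus
$$\Vert p_m\Vert_{\Hc_{\Lambda}}^2 = \int_D |z_1|^{2m}\, d\mu(z).$$
The decisive geometric input is that the spectral norm dominates the minimal-tripotent coordinate, $|z_1| = |\langle z, e_1\rangle| \le |z| < 1$ for $z \in D$; granting this, $|z_1|^{2m} \to 0$ pointwise on $D$ and is dominated by the integrable constant $1$, so dominated convergence gives $\Vert p_m\Vert_{\Hc_{\Lambda}} \to 0$. Since $|\langle p_m(u_0), w\rangle_{\tau}|/\Vert p_m\Vert_{\Hc_{\Lambda}} \to \infty$, the functional is unbounded, completing the contrapositive.

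I expect the main obstacle to be exactly the geometric inequality $|\langle z, e_1\rangle| \le |z|$ relating the Euclidean coordinate against a frame tripotent to the spectral norm; it is the analogue of $|Z_{11}| \le \Vert Z\Vert_{\mathrm{op}}$ in the matrix ball, and should follow from the Peirce/polar structure of the Jordan triple \cite{loos}, but it is the one step that is not purely formal. The remaining care is bookkeeping in the $K$-equivariance reduction, ensuring that boundedness of the functional transfers correctly between $u$ and the diagonal $u_0$ under the unitary $\pi_{\Lambda}(k)$, together with the (already recorded) fact that constant functions lie in $\Hc_{\Lambda}$, which is what makes $\mu$ finite.
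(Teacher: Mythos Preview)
Your argument is correct and takes a genuinely different route from the paper. The paper splits into two cases: for $u\in\partial D$ it evaluates the normalized reproducing kernels $f_{\delta u}=\Vert K_{\delta u}v^s\Vert^{-1}K_{\delta u}v^s$ at $u$, uses the explicit diagonal formula $K(z,w)v^s=d_\Lambda\prod_j(1-z_j\bar w_j)^{\Lambda^s(h_j)}v^s$ together with the Harish-Chandra condition $\Lambda^s(h_1)<1-p$ to force blow-up as $\delta\to1$, and then invokes the Oka--Weil theorem on the polynomially convex set $\bar D$ to pass from $f_{\delta u}$ to polynomials; the exterior case $|u|>1$ is reduced to the boundary case by a rescaling. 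Your approach is shorter and uniform in $u\notin D$: the monomials $p_m(z)=\langle z,e_1\rangle^m w$ give $|\langle p_m(u_0),w\rangle_\tau|=t_1^m\Vert w\Vert_\tau^2\ge\Vert w\Vert_\tau^2$, while $\Vert p_m\Vert^2=\int_D|z_1|^{2m}d\mu(z)\to0$ by dominated convergence against the finite measure $d\mu$, the finiteness coming from $w\in\Hc_\Lambda$. No explicit kernel formulas, no Oka--Weil, no case split.

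The one step you correctly flag---$|\langle z,e_1\rangle|\le|z|$---is indeed true and is exactly the contractivity of the Peirce projection $P_2(e_1)$ in the spectral norm: since $e_1$ is a minimal tripotent, $V_2(e_1)=\C e_1$ and the (Euclidean-orthogonal) Peirce projection is $P_2(e_1)z=\langle z,e_1\rangle e_1$, whence $|\langle z,e_1\rangle|=|P_2(e_1)z|\le|z|$. Contractivity of Peirce projections is a standard fact for JB$^*$-triples (Kaup, Friedman--Russo); it is the matrix-ball identity $|Z_{11}|=|e_1^*Ze_1|\le\Vert Z\Vert_{\mathrm{op}}$ in disguise. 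So your proof is complete once this is cited, whereas the paper's longer argument stays entirely within the Bergman-space machinery already set up and does not appeal to this external Jordan-theoretic input.
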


\begin{proof}
Obviously the evaluation map is bounded if $u \in D$ by the reproducing kernel property, as 
$$ | \langle p(u), v_0 \rangle_{\tau}| = |\langle p, K_u v_0 \rangle_{\Hc_{\Lambda}}| \leq \nm{K_u v_0}_{\Hc_{\Lambda}} \nm{p}_{\Hc_{\Lambda}}.$$
Now we prove the converse. 
Recall \cite{loos} that $\sum_{j=1}^r \R (e_j + e_{-j})
\subseteq \p$ is a  maximal Abelian
subalgebra of $\p$ and 
$$
a(t):=\exp(\sum_{j=1}^r t_j (e_j + e_{-j}))
: \cdot 0\mapsto x(t)=\sum_{j=1}^r \tanh t_j\,  e_j
$$
in $D=G/K$.
The space $\mathbb C^N$
is a disjoint union of $D=\{u \ | \ |u|<1\}$,
the boundary $\partial D=\{u \ | \ |u|=1\}$ and 
the complement $\overline D^{c}=\{u \ | \ |u|>1\}$.
We assume first that $u\in \partial D$ and 
prove that the evaluation
$$p \mapsto \langle p(u), v_0 \rangle_{\tau} $$
is unbounded. By the $K$-equivariance we can
assume that 
$u=u_1 e_1 + \dots + u_r e_r$,
with $u_1=1$ and $0\le u_j\le 1$. 
Write $x = x(t)=a(t) \cdot 0 =\sum_{j=1}^r x_j e_, \, x_j=x_j(t)=\tanh t_j, t_j\ge 0,$
as above.
Now if $\{v^s \}_s$ is an orthonormal basis of weight vectors for $V_{\Lambda}$ with weights $\Lambda^s$ then \cite{kora}
$$\tau(J_{a(t)}(0)) v^s = \prod_j (1-x_j^2)^{\frac{1}{2} \Lambda^s(h_j)} v^s.$$
As
$$B(g \cdot 0,\li{g \cdot 0}) = J_g(0) J_g(0)^*,$$
we see that
the reproducing kernel acts on $v^s$ as
$$  K(x,x) v^s = d_{\Lambda} \prod_j (1-x_j^2)^{ \Lambda^s(h_j)} v^s.$$
By using $K$-equivariance
we have 
the same is true for $z_1 e_1 + \dots + z_r e_r \in D$,  $z_1, \dots, z_r \in \C$, namely
$$ K(z,z) v^s = d_{\Lambda} \prod_j (1-|z_j|^2)^{ \Lambda^s(h_j)} v^s.$$
As $K(z,w)$ is holomorphic in the first coordinate and antiholomorphic in the second, we see that if $z = \sum_{j=1}^r z_j e_j, w = \sum_{j=1}^r w_j e_j \in D$ then
$$K(z,w) v^s = d_{\Lambda} \prod_j (1-z_j \li{w_j})^{ \Lambda^s(h_j)} v^s.$$
We have also for $0< \delta < 1$,
$$ K_{\delta w}(z)
= K(z,\delta w) 
=K(\delta z,w) 
=K_w(\delta z) 
$$
for any $z, w \in D$
and that the function $z \mapsto K_w(z)v$ has norm
$$ \nm{K_w v}_{\Hc_{\Lambda}}^2 = \langle K(w,w)v, v \rangle_{\tau}.$$
Thus  the function
$$f_{\delta u}: z \mapsto \langle K(\delta u,\delta u)v, v \rangle_{\tau}^{- \frac{1}{2}} K_{\delta u}(z) v$$
is of  unit norm and can be analytically extended to a bigger set
$$D_{\epsilon} = \{ z \in \mathbb C^N \ | \ d(z,D) < \epsilon \},$$
containing $\li{D}$ where the distance $d(z, D)$ is defined
using spectral norm. Now we choose a unit weight vector $v = v^s$ such that $| \langle v^s, v_0 \rangle | > 0$ and we get
\begin{equation*}\begin{split}
|\langle f_{\delta u}(u), v_0 \rangle_{\tau}| &= 
|\langle \langle K(\delta u, \delta u)v^s,v^s \rangle_{\tau}^{- \frac{1}{2}} K_{\delta u}(u) v^s, v_0 \rangle_{\tau}|
\\ & = d_{\Lambda}^{ \frac{1}{2}} ( \prod_j (1 - \delta^2 |u_j|^2)^{\Lambda^{s}(h_j)})^{- \frac{1}{2}} \prod_j (1- \delta |u_j|^2)^{\Lambda^s(h_j)} | \langle v^s, v_0 \rangle_{\tau}|
\\ & = d_{\Lambda}^{ \frac{1}{2}}  \prod_j 
\left( \frac{1- \delta |u_j|^2}{(1 - \delta^2 |u_j|^2)^{\frac{1}{2}}} \right)^{\Lambda^{s}(h_j)} | \langle v^s, v_0 \rangle_{\tau}|.
\end{split}\end{equation*}
Now by the Harish-Chandra condition in Theorem \ref{hccondition} we have that
$$ \Lambda^s(h_1) < 1-p \leq -1.$$
Also, $u_1 = 1$ so
$$ \lim_{\delta \rightarrow 1} \frac{1- \delta|u_1|^2}{(1-\delta^2 |u_1|^2 )^{\frac{1}{2}}} = \lim_{\delta \rightarrow 1} \frac{1- \delta}{(1-\delta^2 )^{\frac{1}{2}}} = 0.$$
It follows that
\begin{equation}
\label{limit}
\lim_{\delta \rightarrow 1} \langle f_{\delta u}(u), v_0 \rangle_{\tau} = \lim_{\delta \rightarrow 1} d_{\Lambda}^{ \frac{1}{2}}  \prod_j \left( \frac{1- \delta |u_j|^2}{(1 - \delta^2 |u_j|^2)^{\frac{1}{2}}} \right)^{\Lambda^{s}(h_j)} = \infty,
\end{equation}

Now the domain $D$ is convex, so $D$ and its closure $\bar D$
are polynomially convex. It follows by the Oka-Weil theorem \cite[Theorem VI.1.5]{rang} that for every $\epsilon > 0$ there are $V_{\Lambda}$-valued polynomials
$p_{k}$ on $\mathbb C^N$ 
such that 
$$ \sup_{z\in \overline D}\nm{f_{\delta u}(z) - p_{k}(z)}_{\tau} \to 0, k\to \infty.$$
By the dominated convergence
theorem we then also have
\begin{equation*}
\Vert
p_{k}
\Vert_{\Hc_{\Lambda}}^2 \to
\Vert
f_{\delta u}
\Vert_{\Hc_{\Lambda}}^2
=1, k\to \infty.
\end{equation*}
We can then use (\ref{limit}) to prove
that 
$\langle p(u), v_0 \rangle_{\tau}$
is unbounded for polynomials $p$.

Finally  we prove if $u \notin \li{D}$ then
evaluation is bounded. 
Clearly  $\frac{u}{|u|}
\in \partial D$ where ${|u|}$
is the spectral norm. 
Let $M > 0$ be arbitrarily
large. By the
previous result  for
 $\frac{u}{|u|}$ there 
 is a polynomial $p=p_M$  such 
 that $\nm{p} < 2$ and $\langle p(\frac{u}{|u|}), v_0 \rangle_{\tau} >M $. Denote $q(z) = p(\frac{z}{|u|}).$
Then for $z \in D$
\begin{equation*}
 \nm{ q(z) }_{\tau} = \nm{p(\frac{z}{|u|})}_{\tau} = \langle  p, K_{\frac{z}{|u|}}\frac{q(z)}{\nm{q(z)}_{\tau}} \rangle_{\Hc_{\Lambda}} \leq \nm{p}_{\Hc_{\Lambda}} \nm{K_{\frac{z}{|u|}}}_{\Hc_{\Lambda}}.
\end{equation*}
This must be bounded as $\nm{p} \leq 2$ and $z \mapsto \nm{K_{\frac{z}{|u|}}}$ is bounded on $D$ as $|u| > 1$. Thus $\nm{q}$ is also bounded irrespective of $M$. However,
$$\langle q(u), v_0 \rangle > M,$$
so evaluation in $u$ is unbounded. This completes
the proof.
\end{proof}

\end{document}